\documentclass[pdflatex,sn-mathphys-num]{sn-jnl}

\usepackage{graphicx}%
\usepackage{multirow}%
\usepackage{amsmath,amssymb,amsfonts}%
\usepackage{amsthm}%
\usepackage{mathrsfs}%
\usepackage[title]{appendix}%
\usepackage{textcomp}%
\usepackage{manyfoot}%
\usepackage{booktabs}%
\usepackage{algorithmicx}%
\usepackage{algpseudocode}%
\usepackage{listings}%
\usepackage{mathtools}
\usepackage{physics}
\usepackage{float}
\usepackage{lineno}
\newcommand{\eps}{\varepsilon}
\DeclareMathOperator*{\argmin}{argmin}

\DeclareMathOperator{\zer}{zer}

\DeclareMathOperator{\dist}{dist}
\DeclareMathOperator{\dif}{d\!}

\DeclareMathOperator{\dom}{dom}

\DeclareMathOperator{\gr}{gr}

\DeclareMathOperator{\res}{\mathsf{J}}

\DeclareMathOperator{\range}{ran}

\DeclareMathOperator{\Id}{Id}


\newcommand{\B}{\mathbb{B}}


\renewcommand{\iff}{\Leftrightarrow}

\renewcommand{\emptyset}{\varnothing}
\newcommand{\eqdef}{\triangleq}
\newcommand{\wlim}{\rightharpoonup}

\newcommand{\scrC}{\mathcal{C}}

\newcommand{\scrG}{\mathcal{G}}
\newcommand{\scrH}{\mathcal{H}}

\newcommand{\scrO}{\mathcal{O}}

\newcommand{\scrU}{\mathcal{U}}

\newcommand{\scrY}{\mathcal{Y}}

\newcommand{\setC}{\mathsf{C}}


\newcommand{\0}{\mathbf{0}}

\newcommand{\Rn}{\R^n}

\newcommand{\R}{\mathbb{R}}

\newcommand{\N}{\mathbb{N}}


\DeclareMathOperator{\NC}{\mathsf{N}}

\newcommand{\bC}{{\mathbf{C}}}


\newcommand{\opA}{\mathsf{A}}
\newcommand{\opB}{\mathsf{B}}

\newcommand{\opD}{\mathsf{D}}
\newcommand{\opM}{\mathsf{M}}

\newcommand{\opF}{{\mathsf{F}}}


\DeclarePairedDelimiter{\inner}{\langle}{\rangle}

\newcommand{\close}{\hfill{\footnotesize$\Diamond$}}

\usepackage[ruled,vlined]{algorithm2e}
\usepackage{algpseudocode}

\newtheorem{theorem}{Theorem}
\newtheorem{proposition}[theorem]{Proposition}%
\newtheorem{lemma}[theorem]{Lemma} 
\newtheorem{corollary}[theorem]{Corollary}

\theoremstyle{definition}
\newtheorem{example}{Example}%
\newtheorem{remark}{Remark}%
\newtheorem{problem}{Problem}
\newtheorem{assumption}{Assumption}
\newtheorem{fact}{Fact}

\newtheorem{definition}{Definition}%

\usepackage[dvipsnames,svgnames]{xcolor}
\colorlet{MyBlue}{DodgerBlue!75!Black}

\raggedbottom

\begin{document}

\title[Penalty dynamics]{Asymptotic behavior of penalty dynamics for constrained variational inequalities}

\author[1]{\fnm{Juan} \sur{Peypouquet}}\email{j.g.peypouquet@rug.nl}

\author*[2]{\fnm{Siqi} \sur{Qu}}\email{qu.siqi@uni-mannheim.de}

\author[2]{\fnm{Mathias} \sur{Staudigl}}\email{m.staudigl@uni-mannheim.de}

\affil*[2]{\orgdiv{Department of Mathematics}, \orgname{University of Mannheim}, \orgaddress{\street{B6 26}, \city{Mannheim}, \postcode{68159}, \country{Germany}}}

\affil[1]{\orgdiv{Faculty of Science and Engineering, Systems, Control and Optimization — Bernoulli Institute}, \orgname{Rijksuniversiteit Groningen}, \orgaddress{\street{Nijenborgh 9}, \city{Groningen}, \postcode{9747 AG}, \country{The Netherlands}}}


\abstract{We propose a comprehensive framework for solving constrained variational inequalities via various classes of evolution equations displaying multi-scale aspects. In an infinite-dimensional Hilbertian framework, the class of dynamical systems we propose combine Tikhonov regularization and exterior penalization terms in order to induce strong convergence of trajectories to least norm solutions in the constrained domain. Our construction thus unifies the literature on regularization methods and penalty-based dynamical systems. An extension to a full splitting formulation of the constrained domain is also provided, with associated weak convergence results involving the Attouch-Czarnecki condition.
}
\keywords{Constrained Variational Inequalities, Dynamical Systems, Penalty Dynamics, Tikhonov regularization}

\pacs[MSC Classification]{37C60, 37L05, 49J40, 90C25}

\maketitle

\section{Introduction}
\label{sec:intro}
Let $\scrH$ be a real Hilbert space with inner product $\inner{\cdot,\cdot}$ and corresponding norm $\norm{\cdot}$. 
\begin{problem} 
We are given 
\begin{itemize}
\item $\opA:\scrH\to2^{\scrH}$ a maximally monotone operator; 
\item $\opD:\scrH\to\scrH$ is monotone and $\frac{1}{\eta}$-Lipschitz; 
\item $\scrC\eqdef \zer(\opB)\neq\emptyset$ a nonempty closed convex set which can be represented as the zeros of a cocoercive operator $\opB:\scrH\to\scrH$. 
\end{itemize}
Our aim is to find $x\in\scrH$ such that 
\begin{equation}\label{eq:MI}\tag{P}
0\in \Phi(x)\eqdef \opA(x)+\opD(x)+\NC_{\scrC}(x).
\end{equation}
\end{problem}

This is a three-operator formulation of a general class of variational problems, in which a constrained equilibrium of the sum of two maximally monotone operators $\opA+\opD$ is requested over a closed convex domain $\scrC\subset\scrH$. In this paper we approach this problem via a combination of an exterior penalization method and Tikhonov regularization, essentially leading to a hierarchical equilibrium formulation. A key assumption in our approach is thus that the feasible set $\scrC$ admits a simple representation in terms of the set of zeros of another single-valued monotone operator $\opB$. This general formulation admits many applications in optimal control and optimization, in particular those problems featuring a hierarchical structure. We briefly describe some examples below, and provide a more detailed exposition on applications in Section \ref{sec:Applications}.

\begin{example}[Simple Bilevel Optimization]
A simple bilevel optimization problem is formulated as  
\begin{align*}
 & \min f(x) \\ 
\text{s.t.: } & x\in\scrC=\argmin\{\Psi(y):y\in\scrH\}
\end{align*}
where $\Psi$ is a convex and Fr\'{e}chet differentiable function, with $L$-Lipschitz continuous gradient, i.e. $\Psi\in\bC^{1,1}_{L}(\scrH)$. By Fermat's optimality condition 
$$
x\in\scrC\iff 0=\nabla \Psi(x).
$$
Hence, $\scrC=\zer(\nabla\Psi)$ defines the set of solutions to the lower-level optimization problem over which a minimizer of the function $f$ is searched for. 
These are simplest hierarchical optimization problems, in which the decision variables  in the lower and an upper level variational problems are completely decoupled. This class of problems has been studied extensively. See, for instance \cite{XuKim03,ogura2002non,Bot:2019aa} in an infinite-dimensional setting, or \cite{sabach2017first} in finite dimension. For a recent dynamical systems perspective with acceleration, we refer to \cite{boct2025accelerating} 
\end{example}

\begin{example}[Constrained Variational Inequalities]
Let $\scrC\subset\scrH$ be a set for which we know a convex function $\Psi\in\bC^{1,1}_{L_{\Psi}}(\scrH)$ having the properties that $\Psi\geq 0$ and $\scrC=\Psi^{-1}(0)$. Then, $\opB=\nabla\Psi$ is a maximally monotone operator whose zero set is $\scrC$. Given a mapping $\opD:\scrH\to\scrH$ and a proper convex and lower semi-continuous function $h:\scrH\to(-\infty,\infty]$, we search for a solution of the variational inequality of the second kind  
\begin{equation}
\text{Find $\bar{x}\in\scrC$ such that } \inner{\opD(\bar{x}),x-\bar{x}}+h(x)-h(\bar{x})\geq 0 \qquad\forall x\in\scrC.
\end{equation}
This is equivalent to the inclusion problem
$$
0\in \opD(\bar{x})+\partial h(\bar{x})+\NC_{\scrC}(x),
$$
and arises frequently in optimal control problems \cite{BarbuControlVI,de2011optimal,de2016strong}. 
\end{example}

\subsection*{Our approach}
To tackle problem class \eqref{eq:MI}, we follow a recent trend in variational analysis by approaching the general resolution of problem \eqref{eq:MI} via dynamical systems derived from operator splitting methods featuring multiscale aspects. Our schemes can be considered as hybrid versions of penalty-based methods, inspired by \cite{Attouch:2010aa,AttCzarPey11,banert2015backward,noun2013forward,Bot:2016aa,Bot:2014aa}, and Tikhonov regularization of dynamical systems \cite{attouch1996dynamical,cominetti2008strong,Bot:2020aa}.
Specifically, if $\opD$ is cocoercive, we propose a forward-backward dynamical system of the form
\begin{equation}\tag{FB}\label{eq:FB}
\dot{x}(t)+x(t)=\res_{\lambda(t)\opA}\left(x(t)-\lambda(t)V_{t}(x)\right), 
\end{equation}
where $V_{t}:\scrH\to\scrH$ is a time-dependent vector field, defined as 
$$
V_{t}(x)\eqdef \opD(x)+\eps(t)x+\beta(t)\opB(x).
$$
The evolution equation \eqref{eq:FB} generalizes most of the first-order dynamical systems that have been studied so far in the literature. Indeed, if $\res_{\lambda(t)\opA}=\Id_{\scrH}$ (arising if $\opA=\{0\})$, then \eqref{eq:FB} simplifies to 
$$
\dot{x}(t)+\lambda(t)(\opD(x(t))+\eps(t)x(t)+\beta(t)\opB(x(t)))=0.
$$
In the full potential case, i.e. when $\opD=\nabla f$ and $\opB=\nabla\Psi$, for convex potentials $f,\Psi:\scrH\to\R$ having a Lipschitz continuous gradient, with $\scrC=\Psi^{-1}(0)=\argmin(\Psi)=\zer(\nabla\Psi)$, we obtain an exterior penalization of the gradient method, as studied in \cite{Peypouquet:2012aa}, augmented with Tikhonov regularization, for solving set-constrained convex minimization problems of the form
$$
\min f(x) \quad \text{s.t. }x\in\scrC.
$$
Setting $\opB=0$ (no penalty), the dynamics reduce to a gradient system with Tikhonov regularization, a class of dynamical systems which has received enormous attention in the literature (references \cite{Att96,AlvCab06,cominetti2008strong,Attouch:2010aa,BaiCom01,Cabot2005} give a partial overview on this literature). In the context of full splitting and maximal monotone operators instead of potentials, particular instances of \eqref{eq:FB} have been studied in \cite{BotCse17} without explicit constraints and Tikhonov regularization, in \cite{Bot:2016aa} for the case with a penalty term but no Tikhonov regularization, and in \cite{Bot:2020aa} in the context of Tikhonov regularization and without penalty terms. 

For large-scale equilibrium and minmax problems, the cocoercivity of the single-valued operator $\opD$ is typically an incompatible hypothesis. For such problems, we study an extragradient-based dynamical system in the spirit of Tseng \cite{TsengFBF}, featuring multi-scale aspects. This dynamical system is defined in terms of the projection-differential dynamical system 
\begin{equation}\tag{FBF}\label{eq:FBF}
\left\{\begin{array}{rcl}
p(t)&=&\res_{\lambda(t)\opA}\left(x(t)-\lambda(t)V_{t}(x(t))\right)\\
\dot{x}(t)&=&p(t)-x(t)+\lambda(t)[V_{t}(x(t))-V_{t}(p(t))].
\end{array}\right.
\end{equation}
The asymptotic properties of this evolution equation with Tikhonov regularization has been studied in \cite{Bot:2020aa}, while the penalty case has been investigated in \cite{Bot:2014aa}, although in discrete time. While we do not treat this problem directly here, it should be mentioned that our approach can be used to solve constrained min-max problems in infinite dimensions \cite{Dai24,FastSaddle25}. A thorough analysis of this important problem class in our multi-scale dynamical system framework is left for future research.

\subsection*{Contributions and related literature}
Our dynamical system combines exterior-penalty methods with Tikhonov regularization. With this hybrid construction, we generalize the pure penalization-based dynamical systems studied in \cite{AttCzarPey11,Peypouquet:2012aa}, who concentrate on the case where $\opB$ is the gradient of a convex function $\Psi:\scrH\to [0,\infty]$, satisfying $\scrC=\Psi^{-1}(0)=\argmin_{\scrH}(\Psi)$, and \cite{Bot:2014aa} who extended this to the monotone inclusion setting. Adding the Tikhonov term to the dynamical system allows us to enforce strong convergence to the least-norm solution. We borrow ideas from \cite{Bot:2020aa} to analyze the effects of the Tikhonov term, and extend this technique to deal with constrained variational inequalities. To the best of our knowledge this is a new result in the literature since the analysis of penalty dynamics, \emph{in tandem} with Tikhonov regularization, has not been done so far. In particular, this combined approach gains relevance in inverse problems and PDE constrained optimization; See \cite{doi:10.1137/23M1553170} for a recent approach in this direction. 

Our work is closely related to the hierarchical minimimization formulation of Cabot \cite{Cabot2005}. He proposed an inexact proximal point method for the hierarchical minimization of a finitely many convex potentials. Specifically, in the case of three convex functions $f_{0},f_{1},f_{2}$, the method of \cite{Cabot2005} is designed for solving the nested optimization problems $S_{0}=\argmin_{\scrH}f_{0},S_{1}=\argmin_{S_{0}}f_{1}$ and $S_{2}=\argmin_{S_{1}}f_{2}$. Under time rescaling and constraint qualification, the exact version of the proximal point method of \cite{Cabot2005} thus becomes 
$$
-\frac{x_{n}-x_{n+1}}{\lambda_{n}}\in \beta_{n}\partial f_{0}+\partial f_{1}+\eps_{n} \partial f_{2}. 
$$
In our work, we are interested in continuous-time systems and general monotone operators. Both directions lie out of the scope of \cite{Cabot2005}.

In parallel to this work, a series of papers studied penalty methods and Tikhonov regularization from the lens of dynamical systems (separately). In particular, \cite{bot2024tikhonov} establishes an interesting connection between the Tikhonov regularization and the Halpern schemes and thereby shows the acceleration potential of the Tikhonov regularization. Inertial dynamics with Tikhonov regularization have been studied in \cite{attouch2018combining,BOT2024127689}, among many others. 
Penalty dynamics have been extended to second order in time systems in \cite{boct2017penalty,boct2018second}.\footnote{ 
Parts of the results reported in this paper are published in the conference proceeding \cite{10530399}. Besides giving detailed proofs of all results, which partly were missing from the proceedings, we extend the approach by a new splitting scheme with multiple penalty functions, and present applications and numerical examples.} 
 
\subsection*{Organization of the paper}
After briefly recalling some known facts from convex analysis and monotone operator theory, Section \ref{sec:prelims} presents a detailed analysis of the \emph{central paths}, which are curves parameterized by regularization variables, representing solutions to auxiliary monotone inclusion problems. In particular, we show that central paths are absolutely continuous, differentiable - an important ingredient in our proof building on Lyapunov analysis - and approximate the least-norm solution of \eqref{eq:MI}. Section \ref{sec:dynamics} is concerned with two dynamical systems intended to approximate solutions of \eqref{eq:MI}. The nature of these systems, of either forward-backward or forward-backward-forward type, depends on whether the operator $\opD$ is cocoercive. For each system, and under suitable assumptions on the regularization parameters, we show that every trajectory convergence strongly to the least norm solution of \eqref{eq:MI}. This is achieved by establishing a tracking property of the dynamics with respect to the central paths. Section \ref{sec:Multiscale} extends our analysis of the \eqref{eq:FB} dynamical systems to a domain decomposition problem, modeled via multiple penalty functions. Section \ref{sec:Applications} describes some scenarios in optimal control and non-linear analysis to which our method naturally applies. We also present implementations on a class of image deblurring problems, to illustrate the computational efficacy of the method.

\section{Preliminaries}
\label{sec:prelims}
We follow the standard notation from convex and non-smooth analysis (see e.g. \cite{BauCom16,PeyConvexBook}). Let $\scrH$ be a real Hilbert space with inner product $\inner{\cdot,\cdot}$ and associated norm $\norm{\cdot}$. The symbols $\wlim$ and $\to$ denote weak and strong convergence, respectively. For an extended-real valued function $f:\scrH\to\bar{\R}\eqdef\R\cup\{-\infty,\infty\}$, we denote by $\dom(f)=\{x\in\scrH\vert f(x)<\infty\}$ its effective domain and say that $f$ is proper if $\dom(f)\neq\emptyset$ and $f(x)\neq-\infty$ for all $x\in\scrH$. If $f$ is convex, we let $\partial f(x)\eqdef\{u\in\scrH\vert f(y)\geq f(x)+\inner{y-x,v}\; \forall y\in\scrH\}$ the subdifferential of $f$ at $x\in\dom(f)$. 

Let $\scrC\subseteq\scrH$ be a nonempty set. The indicator function $\delta_{\scrC}:\scrH\to\bar{\R}$, is the function satisfying $\delta_{\scrC}(x)=0$ if $x\in\scrC$ and $+\infty$ otherwise. The subdifferential of the indicator function is the normal cone 
\[
\NC_{\scrC}(x)\eqdef\left\{\begin{array}{ll} \{\xi\in\scrH\vert \inner{\xi,y-x}\leq 0\;\forall y\in\scrC \} & \text{if }x\in\scrC,\\ 
\emptyset & \text{else}.
\end{array}\right.
\]

For a set-valued operator $\opM:\scrH\to 2^{\scrH}$ we denote by $\gr(\opM)\eqdef \{(x,u)\in\scrH\times\scrH\vert u\in \opM(x)\}$ its graph, $\dom(\opM)\eqdef\{x\in\scrH\vert \opM(x)\neq\emptyset\}$ its domain, and by $\opM^{-1}:\scrH\to 2^{\scrH}$ its inverse, defined by 
$(u,x)\in \gr(\opM^{-1})\iff (x,u)\in\gr(\opM).$ We let $\zer(\opM)=\{x\in\scrH\vert 0\in \opM(x)\}$ denote the set of zeros of $\opM$. A operator $\opM$ is \emph{monotone} if $\inner{x-y,u-v}\geq 0$ for all $(x,u),(y,v)\in\gr(\opM)$. A monotone operator $\opM$ is maximally monotone if there exists no proper monotone extension of the graph of $\opM$ on $\scrH\times\scrH$. A single-valued operator $\opM:\scrH\to\scrH$ is \emph{cocoercive} (inverse strongly monotone) if there exists $\mu>0$ such that 
$$
\inner{\opM(x)-\opM(y),x-y}\geq\mu\norm{\opM(x)-\opM(y)}^{2}.
$$
A cocoercive operator is Lipschitz, but the converse is not true in general.
\begin{fact}\cite[Proposition 23.39]{BauCom16}
If $\opM$ is maximally monotone, then $\zer(\opM)$ is convex and closed.
\end{fact}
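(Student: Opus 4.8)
The plan is to reduce $\zer(\opM)$ to an intersection of closed half-spaces, whence both closedness and convexity follow at once. The tool is the standard characterization of maximality: because $\opM$ is maximally monotone, a pair $(x,u)\in\scrH\times\scrH$ lies in $\gr(\opM)$ \emph{if and only if} $\inner{x-y,u-v}\geq 0$ for every $(y,v)\in\gr(\opM)$. One direction is simply monotonicity; the converse is precisely the content of maximality, since a pair that is monotonically related to the entire graph yet fails to lie on it could be adjoined to form a proper monotone extension, contradicting maximality.

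First I would specialize this characterization to $u=0$, which yields
$$
\zer(\opM)=\{x\in\scrH : \inner{x-y,-v}\geq 0 \text{ for all } (y,v)\in\gr(\opM)\}=\bigcap_{(y,v)\in\gr(\opM)}\{x\in\scrH: \inner{x,v}\leq\inner{y,v}\}.
$$
Each member of the intersection is a closed half-space (or all of $\scrH$ when $v=0$), hence closed and convex. Since an arbitrary intersection of closed convex sets is closed and convex, the conclusion is immediate; the case $\zer(\opM)=\emptyset$ is covered as well, the empty set being both closed and convex.

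The hard part — insofar as there is one — is the maximality-to-membership direction of the characterization, which merely repackages the defining property of maximal monotonicity; once that is in hand the remaining argument is a one-line observation about intersections of half-spaces. An alternative route would be to invoke demiclosedness of the graph (closedness in the strong$\times$weak topology) to obtain closedness of $\zer(\opM)$ directly, together with a Minty/resolvent argument for convexity, but the half-space representation is cleaner and delivers both properties simultaneously.
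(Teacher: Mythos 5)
Your proof is correct, and it is essentially the argument behind the cited result \cite[Proposition 23.39]{BauCom16}: the paper itself offers no proof, but your half-space representation $\zer(\opM)=\bigcap_{(y,v)\in\gr(\opM)}\{x\in\scrH:\inner{x-y,v}\leq 0\}$ is exactly the standard one, and it is the same characterization the paper records separately as Fact~\ref{fact:angle}. Nothing to add.
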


\begin{fact}\label{fact:angle}
If $\opM$ is maximally monotone, then 
\[p\in\zer(\opM)\iff \inner{u-p,w}\geq 0\quad\forall (u,w)\in\gr(\opM).\]
\end{fact}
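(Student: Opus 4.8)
The plan is to establish the two implications separately, with the forward direction following directly from monotonicity and the reverse direction relying essentially on maximality.

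For the forward implication, suppose $p\in\zer(\opM)$, so that $0\in\opM(p)$, i.e. $(p,0)\in\gr(\opM)$. Then for any $(u,w)\in\gr(\opM)$, the monotonicity inequality applied to the two pairs $(p,0)$ and $(u,w)$ reads $\inner{u-p,w-0}\geq 0$, which is precisely $\inner{u-p,w}\geq 0$. This direction uses only monotonicity, not maximality.

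For the reverse implication, suppose $\inner{u-p,w}\geq 0$ holds for every $(u,w)\in\gr(\opM)$. The key observation is that this condition says exactly that the candidate pair $(p,0)$ is monotonically related to every element of $\gr(\opM)$: indeed, the monotonicity inequality between $(p,0)$ and an arbitrary $(u,w)\in\gr(\opM)$ is $\inner{u-p,w-0}\geq 0$, which is the standing hypothesis. Consequently, adjoining $(p,0)$ to the graph of $\opM$ produces a monotone set containing $\gr(\opM)$. Since $\opM$ is \emph{maximally} monotone, its graph admits no proper monotone extension, and therefore $(p,0)$ must already belong to $\gr(\opM)$. This yields $0\in\opM(p)$, i.e. $p\in\zer(\opM)$, completing the equivalence.

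The main obstacle, and the only nontrivial point, is the reverse direction: the conclusion $(p,0)\in\gr(\opM)$ cannot be derived from the inequality alone for a merely monotone operator, and it is exactly the maximality hypothesis that forces the candidate pair into the graph. I would emphasize in the write-up that this is where maximality is indispensable, whereas the forward direction holds for any monotone operator.
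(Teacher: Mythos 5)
Your proof is correct: the forward direction is immediate from monotonicity applied to the pairs $(p,0)$ and $(u,w)$, and the reverse direction correctly invokes the definition of maximality used in the paper (no proper monotone extension of the graph) to force $(p,0)\in\gr(\opM)$. The paper states this as a Fact without proof, as it is a standard characterization (cf.\ Proposition 23.39 in the cited Bauschke--Combettes reference), and your argument is exactly the canonical one.
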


The resolvent of $\opM$, $\res_{\opM}:\scrH\to 2^{\scrH}$ is defined by $\res_{\opM}=(\Id+\opM)^{-1}$. If $\opM$ is maximally monotone, then $\res_{\opM}$ is single-valued and maximally monotone. In particular, the resolvent of the normal cone mapping $\opM=\NC_{C}$ of a closed convex set $C\subset\scrH$ is the orthogonal projection $\Pi_{C}$. The support function of a closed set $C\subset\scrH$ is $\sigma_{C}(u)=\sup_{y\in C}\inner{y,u}$. Clearly, $p\in\NC_{C}(u)$ if, and only, if $\sigma_{C}(p)=\inner{p,u}$. According to \cite[Proposition 23.31]{BauCom16}, we have the relation 
\begin{equation}\label{eq:resolventContinuous}
\norm{\res_{\lambda M}(x)-\res_{\alpha M}(x)}\leq\abs{\lambda-\alpha}\cdot \norm{\opM_{\lambda}(x)},
\end{equation}
where $\opM_{\lambda}\eqdef \frac{1}{\lambda}(\Id_{\scrH}-\res_{\lambda\opM})$ denotes the \emph{Yosida approximation}, with parameter $\lambda$, of the maximally monotone operator $\opM$. 
The Fitzpatrick function associated to a monotone operator $\opM$ is the convex and lower semi-continuous function defined as 
\[
\varphi_{\opM}(x,u)=\sup_{(y,v)\in\gr(\opM)}\{\inner{x,y}+\inner{y,u}-\inner{y,v}\}.
\]

\begin{lemma}
Let $x,y,z\in\scrH$ and $\alpha\in\R$. Then  
\begin{align}\label{eq:3point}
&2\inner{x-y,z-y}=\norm{x-y}^{2}-\norm{x-z}^{2}+\norm{z-y}^{2}\\
&\norm{\alpha x+(1-\alpha)y}^{2}+\alpha(1-\alpha)\norm{x-y}^{2}=\alpha\norm{x}^{2}+(1-\alpha)\norm{y}^{2}
\label{eq:convexHilbert}
\end{align}
\end{lemma}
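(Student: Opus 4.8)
Both identities are elementary consequences of the bilinearity and symmetry of the inner product on $\scrH$, so the plan is simply to expand each side and match terms; no structural argument is needed, and the two claims can be handled independently.

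For \eqref{eq:3point}, I would start from the decomposition $x-z=(x-y)-(z-y)$ and expand the squared norm using $\norm{u-v}^{2}=\norm{u}^{2}-2\inner{u,v}+\norm{v}^{2}$, which gives $\norm{x-z}^{2}=\norm{x-y}^{2}-2\inner{x-y,z-y}+\norm{z-y}^{2}$. Solving this relation for the cross term $2\inner{x-y,z-y}$ immediately yields the claimed equality. This is a cleaner route than expanding the left-hand side directly, since it reduces the identity to a single application of the expansion of a squared difference.

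For \eqref{eq:convexHilbert}, I would expand the two terms on the left separately, writing $\norm{\alpha x+(1-\alpha)y}^{2}=\alpha^{2}\norm{x}^{2}+2\alpha(1-\alpha)\inner{x,y}+(1-\alpha)^{2}\norm{y}^{2}$ and $\alpha(1-\alpha)\norm{x-y}^{2}=\alpha(1-\alpha)\big(\norm{x}^{2}-2\inner{x,y}+\norm{y}^{2}\big)$, and then add them and collect coefficients. The key simplifications are $\alpha^{2}+\alpha(1-\alpha)=\alpha$ and $(1-\alpha)^{2}+\alpha(1-\alpha)=1-\alpha$, while the cross terms $\pm 2\alpha(1-\alpha)\inner{x,y}$ cancel exactly. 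This leaves $\alpha\norm{x}^{2}+(1-\alpha)\norm{y}^{2}$, as required; note that the identity holds for every real $\alpha$, since convexity of the combination is never invoked.

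There is no genuine obstacle here: the only point requiring care is the bookkeeping of coefficients in the second identity, where one must verify that the $\inner{x,y}$ contributions vanish and that the quadratic coefficients collapse precisely to $\alpha$ and $1-\alpha$. Both relations are standard polarization-type identities, and the proof is a direct computation rather than an argument.
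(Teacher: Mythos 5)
Your proof is correct: both computations check out (the decomposition $x-z=(x-y)-(z-y)$ for \eqref{eq:3point}, and the cancellation of the $\inner{x,y}$ terms together with $\alpha^{2}+\alpha(1-\alpha)=\alpha$ and $(1-\alpha)^{2}+\alpha(1-\alpha)=1-\alpha$ for \eqref{eq:convexHilbert}). The paper states this lemma without proof, treating both as standard Hilbert-space identities, and your direct expansion is exactly the argument being implicitly invoked.
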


To assess the global asymptotic stability of the dynamical systems we consider, we recall the following central result (see e.g. \cite{abbas2014newton}, Lemma 5.1): 

\begin{lemma}\label{lem:Abbas}
    Suppose that $F:\R_{\geq0}\to\R$ is locally absolutely continuous and bounded below and that there exists $G\in L^{1}(\R_{\geq 0})$ such that
    $$
    \frac{\dif}{\dif t} F(t)\leq G(t)\qquad\text{a.e. }t\in\R_{\geq 0}. 
    $$
    Then $\lim_{t\to\infty}F(t)$ exists in $\R$. 
\end{lemma}
\subsection{Perturbed solutions and the central funnel}

We follow a {\it double penalization} approach to solve the constrained variational inequality \eqref{eq:MI}. We work under following standing assumption:

\begin{assumption}\label{ass:standing}
The main set of standing hypothesis in this paper is the following:
\begin{itemize}
\item[(i)]  $\opA+\NC_{\scrC}$ is maximally monotone and $\zer(\opA+\opD+\NC_{\scrC})\neq\emptyset$. 
\item[(ii)] $\opB:\scrH\to\scrH$ is $\mu$-cocoercive (for some $\mu>0)$, and coercive: 
$$
\lim_{\norm{x}\to\infty}\frac{\inner{x,\opB(x)}}{\norm{x}}=\infty
$$
\item[(iii)] $\opD:\scrH\to\scrH$ is maximally monotone and $\frac{1}{\eta}$-Lipschitz.
\end{itemize}
\end{assumption}

Since $\opA$ is maximally monotone and $\scrC$ is a nonempty convex and closed set, $\opA+\NC_{\scrC}$ is maximally monotone if additional regularity conditions are satisfied. A comprehensive list of conditions guaranteeing the maximal monotonicity of the sum of two maximally monotone operators can be found in \cite{borwein2010convex,BauCom16}. In particular, if $\dom(\opD)=\scrH$ and $\dom(\opB)=\scrH$ (which we assume throughout this manuscript), Corollary 25.5 of \cite{BauCom16} implies that the operators $\Phi=\opA+\opD+\NC_{\scrC}$ and $\opF_{n}=\opA+\opD+\beta_{n}\opB$ are maximally monotone.

To ensure strong convergence, we follow the classical iterative Tikhonov regularization approach. However, besides strong convergence, we have to respect the constraints imposed on the variational inequality problems. To enforce these constraints, we augment our operator by a time-varying penalty term which regulates over time the importance we attach to constraint violation of the generated trajectory. The combination of these two dynamic effects leads to study a family of \emph{auxiliary problems}, formulated as:
\begin{problem}
Given $(\eps,\beta)\in\R^{2}_{>0}$ find $x\in\scrH$ such that 
\begin{equation}\label{eq:MIauxiliary}
0\in \Phi_{\eps,\beta}(x)\eqdef (\opA+\opD+\eps\Id_{\scrH}+\beta \opB)(x).
\end{equation}
\end{problem}
The logic behind this model is as follows: Suppose $0\in\Phi_{\eps,\beta}(\hat x_{\eps,\beta})$. On the one hand, the presence of the Tikhonov term $\eps\Id_{\scrH}$ will attract the vector $\hat x_{\eps,\beta}$ towards the origin and, as $\eps\to 0$, $\hat x_{\eps,\beta}$ will approach the set of solutions of \eqref{eq:MI}, selecting its least-norm element. On the other hand, the penalization term $\beta \opB$ will force $\opB(\hat x_{\eps,\beta})\to 0$, as $\beta\to\infty$. As a result, the limit of $\hat x_{\eps,\beta}$ will be a zero of $B$, thus an element of $\scrC$.

In the remainder of this section, we show that solutions of \eqref{eq:MIauxiliary} do approximate the least-norm solution of \eqref{eq:MI}, as $(\varepsilon,\beta)\to(0,\infty)$, and establish some important structural properties of this approximation. For $(\eps,\beta)\in\R^{2}_{\geq 0}$, we define the mapping $V_{\eps,\beta}:\scrH\to\scrH$ by 
\begin{equation} \label{E:V_e_b}
    V_{\eps,\beta}(x)\eqdef \opD(x)+\eps x+\beta \opB(x)
\end{equation}
for all $x\in\scrH$. Assumption \ref{ass:standing} immediately implies the following regularity properties of the parameterized family $(V_{\eps,\beta})_{(\eps,\beta)\in\R_{\geq 0}^{2}}.$
\begin{lemma}\label{lem:LipschitzV}
For each $(\eps,\beta)\in\R^{2}_{\geq 0}$, the mapping $x\mapsto V_{\eps,\beta}(x)$ is Lipschitz continuous with modulus $L_{\varepsilon,\beta}\eqdef \frac{1}{\eta}+\eps+\frac{\beta}{\mu}$, and $\eps$-strongly monotone. Furthermore, $\dom(V_{\eps,\beta})=\scrH$.
\end{lemma}
Hence, the auxiliary problems \eqref{eq:MIauxiliary} constitute a family of strongly monotone inclusions. Since $\Phi_{\eps,\beta}$ is strongly monotone, the set $\zer(\Phi_{\eps,\beta})$ is a singleton whose unique element we denote by $\bar{x}(\eps,\beta)$. The function $(\eps,\beta)\mapsto\bar{x}(\eps,\beta)$ maps the positive quadrant of $\R^2$ to a region in $\scrH$, which we call the {\it central funnel} for the reasons we will explain in the following.

\subsection{Central paths approximate the least-norm zero of $\Phi$}
\label{sec:Centralpath}
Given absolutely continuous functions $\eps,\beta:\R_{\geq 0}\to\R_{>0}$, such that $\lim_{t\to\infty}\eps(t)=0$ and $\lim_{t\to\infty}\beta(t)=\infty$, the curve $t\mapsto \bar{x}(\eps(t),\beta(t))$ is called the {\it central path} for the approximation of Problem \eqref{eq:MI} given by \eqref{eq:MIauxiliary}. Similarly, given positive sequences $(\eps_{n})_{n\in\N}$ and $(\beta_{n})_{n\in\N}$, such that $\eps_{n}\to 0$ and $\beta_{n}\to\infty$, the sequence $\big(\bar{x}(\eps_{n},\beta_{n})\big)_{n\in\N}$ is a {\it (discrete) central path}. As shown below, every central path converges strongly to the least-norm zero of $\Phi$, which motivates our choice of the word funnel.

\begin{proposition}\label{prop:asymptotics}
Let $(\eps_{n})_{n\in\N},(\beta_{n})_{n\in\N}$ be sequences in $\R_{>0}$ such that $\eps_{n}\to 0,\beta_{n}\to+\infty$, and $\lim_{n\to\infty}\eps_{n}\beta_{n}=\infty$. Then $\bar{x}(\eps_{n},\beta_{n})\to\Pi_{\zer(\Phi)}(0)$ as $n\to\infty$. 
\end{proposition}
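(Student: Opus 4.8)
The plan is to combine a Tikhonov-type monotonicity estimate with a Fitzpatrick/Attouch--Czarnecki argument for the penalty term. Write $x_n\eqdef\bar{x}(\eps_n,\beta_n)$, so there is $a_n\in\opA(x_n)$ with $a_n+\opD(x_n)+\eps_n x_n+\beta_n\opB(x_n)=0$. Fix the target $p\eqdef\Pi_{\zer(\Phi)}(0)$ together with $a^{\ast}\in\opA(p)$ and $\xi\in\NC_{\scrC}(p)$ satisfying $a^{\ast}+\opD(p)+\xi=0$, and recall $\opB(p)=0$ since $p\in\scrC=\zer(\opB)$. Pairing the inclusion with $x_n-p$ and using monotonicity of $\opA$ and $\opD$ together with $\mu$-cocoercivity of $\opB$ gives the fundamental inequality $\eps_n\inner{x_n,x_n-p}+\mu\beta_n\norm{\opB(x_n)}^{2}\le\inner{\xi,x_n-p}$. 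Discarding the nonnegative penalty term and bounding $\inner{\xi,x_n-p}\le\norm{\xi}(\norm{x_n}+\norm{p})$ yields a quadratic inequality in $\norm{x_n}$, hence $\norm{x_n}=O(1/\eps_n)$; substituting this back controls the penalty term as $\norm{\opB(x_n)}^{2}=O(1/(\eps_n\beta_n))\to0$, where the hypothesis $\eps_n\beta_n\to\infty$ enters for the first time. Boundedness of $(x_n)$ then follows from coercivity of $\opB$: if $\norm{x_n}\to\infty$ along a subsequence, Cauchy--Schwarz gives $\inner{x_n,\opB(x_n)}/\norm{x_n}\le\norm{\opB(x_n)}\to0$, contradicting Assumption~\ref{ass:standing}(ii).

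Next I would identify the weak cluster points. Let $x_{n_k}\wlim\bar{x}$. Since $\opB$ is cocoercive, hence maximally monotone with sequentially weak--strong closed graph, $\opB(x_{n_k})\to0$ forces $\opB(\bar{x})=0$, i.e.\ $\bar{x}\in\scrC$. To show $\bar{x}\in\zer(\Phi)$ I would invoke maximal monotonicity of $\Phi$ (Assumption~\ref{ass:standing}(i)) through Fact~\ref{fact:angle}: for arbitrary $(u,w)\in\gr(\Phi)$, write $w=a_u+\opD(u)+\nu_u$ with $\nu_u\in\NC_{\scrC}(u)$, estimate $\inner{u-x_n,w}$ from below using monotonicity of $\opA+\opD$, and absorb the penalty contribution via the defining inequality $\beta_n\inner{\opB(x_n),x_n-u}\ge\inner{x_n,\nu_u}-\beta_n\varphi_{\opB}(u,\nu_u/\beta_n)$. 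Collecting terms (and using $\inner{u,\nu_u}=\sigma_{\scrC}(\nu_u)$) produces $\inner{u-x_n,w}\ge-[\beta_n\varphi_{\opB}(u,\nu_u/\beta_n)-\sigma_{\scrC}(\nu_u)]-\eps_n\inner{u-x_n,x_n}$. The bracket is nonnegative, and cocoercivity of $\opB$ forces the pointwise limit $\beta\varphi_{\opB}(u,\nu_u/\beta)\to\sigma_{\scrC}(\nu_u)$ as $\beta\to\infty$: indeed $\beta\varphi_{\opB}(u,\nu_u/\beta)=\sup_{(y,v)\in\gr(\opB)}[\inner{y,\nu_u}-\beta\inner{y-u,v}]$, and by cocoercivity $\inner{y-u,v}=0$ drives $y$ into $\scrC$. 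Letting $k\to\infty$ with $\eps_{n_k}\to0$ and $(x_{n_k})$ bounded gives $\inner{u-\bar{x},w}\ge0$, hence $\bar{x}\in\zer(\Phi)$.

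For the least-norm selection I would return to the fundamental inequality with this $p$ and apply the same Fitzpatrick bound to $\inner{\xi,x_n-p}$; the penalty terms then cancel exactly, leaving $\eps_n\inner{x_n,x_n-p}\le R_n$ with $R_n\eqdef\beta_n\varphi_{\opB}(p,\xi/\beta_n)-\sigma_{\scrC}(\xi)\ge0$. Dividing by $\eps_n$ and establishing $R_n/\eps_n\to0$ yields $\limsup_k\inner{x_{n_k},x_{n_k}-p}\le0$, hence $\limsup_k\norm{x_{n_k}}\le\norm{p}$. Weak lower semicontinuity of the norm then gives $\norm{\bar{x}}\le\norm{p}$, while minimality of $p$ over $\zer(\Phi)\ni\bar{x}$ forces equality; uniqueness of the projection gives $\bar{x}=p$, and $\norm{x_{n_k}}\to\norm{p}$ together with $x_{n_k}\wlim p$ upgrades to strong convergence. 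Since every subsequence has a further subsequence converging strongly to the same $p$, the whole sequence converges strongly to $\Pi_{\zer(\Phi)}(0)$.

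I expect the main obstacle to be the quantitative step $R_n/\eps_n\to0$, i.e.\ controlling the penalty term precisely at the Tikhonov scale. Pointwise convergence $R_n\to0$ comes for free from cocoercivity (as in Step~2--3), but it is \emph{not} sufficient here: one needs the Fitzpatrick gap $\beta\varphi_{\opB}(p,\xi/\beta)-\sigma_{\scrC}(\xi)$ to decay fast enough to beat $\eps_n$, and this is exactly the role of the coupling hypothesis $\eps_n\beta_n\to\infty$ combined with a rate estimate on that gap. Getting this coordination of the two multiscale parameters right — rather than the boundedness or the feasibility of cluster points, which are comparatively routine — is the crux of the argument.
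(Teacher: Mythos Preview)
Your boundedness argument (Step~1), the feasibility of cluster points, and the final upgrade from weak to strong convergence are essentially the paper's argument. The genuine gap is in the least-norm step, exactly where you suspect it: the Fitzpatrick rate $R_{n}/\eps_{n}\to 0$ is \emph{not} provable under the standing assumptions. Cocoercivity gives only the upper bound $\norm{\opB(y)}\le \mu^{-1}\norm{y-p}$, never a lower bound of the type $\norm{\opB(y)}\gtrsim\dist(y,\scrC)$; without such an error bound the gap $\beta\,\varphi_{\opB}\!\left(p,\xi/\beta\right)-\sigma_{\scrC}(\xi)$ can decay arbitrarily slowly in $\beta$, so the coupling $\eps_{n}\beta_{n}\to\infty$ alone does not force $R_{n}=o(\eps_{n})$. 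Your proof stalls here.

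The fix, and what the paper actually does, is to avoid the Fitzpatrick detour altogether. You already know (Step~2) that every weak cluster point of $(x_{n})$ lies in $\scrC$. Since $\xi\in\NC_{\scrC}(p)$, this immediately gives
\[
\limsup_{n\to\infty}\inner{\xi,x_{n}-p}\;\le\;0,
\]
because the $\limsup$ is realised along some subsequence with a weak limit $\bar{x}\in\scrC$, and $\inner{\xi,\bar{x}-p}\le 0$. Feed this directly into your fundamental inequality (before you weakened $\beta_{n}\inner{\opB(x_{n}),x_{n}-p}$ to $\mu\beta_{n}\norm{\opB(x_{n})}^{2}$ — just drop the nonnegative term):
\[
\eps_{n}\inner{x_{n},x_{n}-p}\;\le\;\inner{\xi,x_{n}-p}.
\]
The moment the right-hand side is $\le 0$ you get $\norm{x_{n}}^{2}\le\inner{x_{n},p}\le\norm{x_{n}}\norm{p}$, hence $\norm{x_{n}}\le\norm{p}$; this yields $\limsup_{n}\norm{x_{n}}\le\norm{p}$, and your Step~4 finishes as written. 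No rate on the penalty is needed. The same simplification streamlines your identification of $\bar{x}\in\zer(\Phi)$: for $(u,w)\in\gr(\Phi)$ one has $u\in\scrC$, so $\opB(u)=0$ and monotonicity gives $\beta_{n}\inner{\opB(x_{n}),x_{n}-u}\ge 0$ directly, while $\inner{\nu_{u},u-\bar{x}}\ge 0$ follows from $\bar{x}\in\scrC$; the Fitzpatrick bookkeeping is unnecessary there too.
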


We prove this proposition via a sequence of Lemmas. 
\begin{lemma}\label{lem:limitfeasible}
Let $(\eps_{n})_{n\in\N},(\beta_{n})_{n\in\N}$ be sequences in $\R_{>0}$ such that $\eps_{n}\to 0,\beta_{n}\to+\infty$, and $\lim_{n\to\infty}\eps_{n}\beta_{n}=\infty$. Denote by $(\bar{x}_{n})_{n\in\N}$ the discrete central path obtained by $\bar{x}_{n}=\bar{x}(\eps_{n},\beta_{n})$. Any weak limit point of $(\bar{x}_{n})_{n\in\N}$ is contained in $\scrC$. 
\end{lemma}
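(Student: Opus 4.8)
The plan is to extract the inclusion characterizing $\bar{x}_n$, pair it against a fixed solution of \eqref{eq:MI}, and use monotonicity of $\opA,\opD$ together with cocoercivity of $\opB$ to distill a single master estimate from which both boundedness and asymptotic feasibility follow. Concretely, I would fix $x^{\ast}\in\zer(\Phi)$ (nonempty by Assumption~\ref{ass:standing}(i)); expanding $0\in\opA(x^{\ast})+\opD(x^{\ast})+\NC_{\scrC}(x^{\ast})$ gives $a^{\ast}\in\opA(x^{\ast})$ and $\nu^{\ast}\in\NC_{\scrC}(x^{\ast})$ with $a^{\ast}+\opD(x^{\ast})=-\nu^{\ast}$, while $x^{\ast}\in\scrC=\zer(\opB)$ forces $\opB(x^{\ast})=0$. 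On the other side $0\in\Phi_{\eps_n,\beta_n}(\bar{x}_n)$ yields $a_n\in\opA(\bar{x}_n)$ with $a_n+\opD(\bar{x}_n)+\eps_n\bar{x}_n+\beta_n\opB(\bar{x}_n)=0$. Pairing this identity with $\bar{x}_n-x^{\ast}$ and regrouping, the monotone terms $\inner{a_n-a^{\ast},\bar{x}_n-x^{\ast}}$ and $\inner{\opD(\bar{x}_n)-\opD(x^{\ast}),\bar{x}_n-x^{\ast}}$ are nonnegative, $\mu$-cocoercivity of $\opB$ (with $\opB(x^{\ast})=0$) gives $\inner{\opB(\bar{x}_n),\bar{x}_n-x^{\ast}}\ge\mu\norm{\opB(\bar{x}_n)}^2$, and the remaining cross term collapses to $\inner{\nu^{\ast},\bar{x}_n-x^{\ast}}$. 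This produces the master inequality
\begin{equation*}
\eps_n\inner{\bar{x}_n,\bar{x}_n-x^{\ast}}+\beta_n\mu\norm{\opB(\bar{x}_n)}^{2}\le\inner{\nu^{\ast},\bar{x}_n-x^{\ast}}\le\norm{\nu^{\ast}}\,\norm{\bar{x}_n-x^{\ast}}.
\end{equation*}

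The main obstacle is boundedness of $(\bar{x}_n)$, and this is exactly where the coercivity in Assumption~\ref{ass:standing}(ii) enters. I would argue by contradiction: suppose $\norm{\bar{x}_{n_k}}\to\infty$ along a subsequence and divide the master inequality by $\norm{\bar{x}_{n_k}}$. The right-hand side then stays bounded (it tends to $\norm{\nu^{\ast}}$), and the first left-hand term remains $\ge 0$ once $\norm{\bar{x}_{n_k}}\ge\norm{x^{\ast}}$. For the penalty term I would write $\inner{\opB(\bar{x}_{n_k}),\bar{x}_{n_k}-x^{\ast}}/\norm{\bar{x}_{n_k}}$ as $\inner{\bar{x}_{n_k},\opB(\bar{x}_{n_k})}/\norm{\bar{x}_{n_k}}$, which diverges to $+\infty$ by coercivity, minus a term bounded by $\tfrac{1}{\mu}\norm{\bar{x}_{n_k}-x^{\ast}}\,\norm{x^{\ast}}/\norm{\bar{x}_{n_k}}$ (here cocoercivity makes $\opB$ Lipschitz with constant $1/\mu$ and $\opB(x^{\ast})=0$). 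Multiplying by $\beta_{n_k}\to\infty$ sends the left-hand side to $+\infty$, contradicting the bounded right-hand side. Hence $(\bar{x}_n)$ is bounded. I expect this coercivity step to be the only genuinely nontrivial part; the rest is bookkeeping.

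With boundedness in hand the conclusion is routine. The right-hand side of the master inequality is then bounded and $\eps_n\inner{\bar{x}_n,\bar{x}_n-x^{\ast}}\to 0$, so $\beta_n\norm{\opB(\bar{x}_n)}^{2}$ stays bounded; since $\beta_n\to\infty$ this yields $\opB(\bar{x}_n)\to 0$ strongly. Finally, let $\bar{x}$ be any weak limit point, $\bar{x}_{n_k}\wlim\bar{x}$. Passing to the limit in $\inner{\bar{x}_{n_k}-y,\opB(\bar{x}_{n_k})-\opB(y)}\ge 0$, where $\inner{\bar{x}_{n_k},\opB(\bar{x}_{n_k})}\to 0$ (boundedness times $\opB(\bar{x}_{n_k})\to 0$) and $\inner{y,\opB(\bar{x}_{n_k})}\to 0$, while $\inner{\bar{x}_{n_k},\opB(y)}\to\inner{\bar{x},\opB(y)}$ by weak convergence, gives $\inner{y-\bar{x},\opB(y)}\ge 0$ for every $y\in\scrH$. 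By Fact~\ref{fact:angle} applied to the maximally monotone operator $\opB$, this characterizes $\bar{x}\in\zer(\opB)=\scrC$, which is the claim. Note that this argument really only uses $\beta_n\to\infty$ and coercivity; the assumption $\eps_n\beta_n\to\infty$ is reserved for the later identification of the limit as the least-norm solution.
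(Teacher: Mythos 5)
Your proof is correct, and it reaches the conclusion by a route that differs from the paper's in two substantive ways. First, where you pair the inclusion $0\in\Phi_{\eps_n,\beta_n}(\bar x_n)$ directly against a solution $x^\ast$ and its normal vector $\nu^\ast$ to get a single master inequality, the paper instead rewrites both $\bar x_n$ and $z$ as fixed points of the resolvent $\res_{\opF_n}$ and exploits firm non-expansiveness and non-expansiveness to obtain its key inequality \eqref{eq:finalstep} and the boundedness of $(\eps_n\bar x_n)$; the content is essentially the same, but the paper's resolvent form of \eqref{eq:finalstep} is reused verbatim in the proof of Proposition~\ref{prop:asymptotics}, so its detour pays off downstream. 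Second, and more interestingly, the logical order is reversed: the paper first proves $\opB(\bar x_n)\to 0$ (which is where it needs $\eps_n\beta_n\to\infty$, to kill the bounded quantity $\inner{\eps_n\bar x_n-p,\eps_n(\bar x_n-z)}/(\beta_n\eps_n)$), and only then invokes coercivity of $\opB$ to deduce boundedness of $(\bar x_n)$; you prove boundedness first by a contradiction argument built on coercivity and the $1/\mu$-Lipschitz bound $\norm{\opB(\bar x_{n_k})}\le\tfrac{1}{\mu}\norm{\bar x_{n_k}-x^\ast}$, after which $\opB(\bar x_n)\to 0$ follows from $\beta_n\to\infty$ alone. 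Your closing observation is accurate: your argument shows that the hypothesis $\eps_n\beta_n\to\infty$ is not needed for feasibility of weak limit points, whereas the paper's ordering makes it appear necessary already at this stage. The final Minty-type identification of the weak limit via Fact~\ref{fact:angle} applied to $\opB$ is identical in both proofs.
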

\begin{proof}
Let $(\eps_{n},\beta_{n})\to(0,\infty)$ as $n\to\infty$. For every $n\in\N$, define $\bar{x}_{n}\equiv \bar{x}(\eps_{n},\beta_{n})$, the unique root of the set-valued mapping $\Phi_{n}\equiv\Phi_{\eps_{n},\beta_{n}}:\scrH\to 2^{\scrH}$. To simplify the notation, we define the set-valued operator $\opF_{n}:\scrH\to 2^{\scrH}$ by
$$   
\opF_{n}\eqdef \opA+\opD+\beta_{n}\opB. 
$$
Hence, for each $n\geq 1$, we have the inclusion 
\begin{align*}
0\in &\opF_{n}(\bar{x}_{n})+\eps_{n}\bar{x}_{n}\iff (1-\eps_{n})\bar{x}_{n}\in (\Id_{\scrH}+\opF_{n})(\bar{x}_{n})\\
& \iff \bar{x}_{n}=\res_{\opF_{n}}((1-\eps_{n})\bar{x}_{n})
\end{align*}
At the same time, for every $z\in\zer(\opA+\opD+\NC_{\scrC})$, it holds that $z\in\scrC$, and therefore $\beta_{n}\opB(z)=0$ for all $n\geq 1$. This implies that there exists $p\in\NC_{\scrC}(z)$ such that for all $n\in\N$ 
$$
z-p\in(\Id_{\scrH}+\opF_{n})(z)\iff z=\res_{\opF_{n}}(z-p).
$$
Since $\opF_{n}$ is maximally monotone, $\res_{\opF_{n}}$ is firmly non-expansive. Hence, 
\begin{align*}
\norm{\bar{x}_{n}-z}^{2}&\leq \inner{\res_{\opF_{n}}((1-\eps_{n}\bar{x}_{n}))- \res_{\opF_{n}}(z-p), (1-\eps_{n})\bar{x}_{n}-(z-p)}\\
&=\inner{\bar{x}_{n}-z,(1-\eps_{n})\bar{x}_{n}-(z-p)}
\end{align*}
Rearranging yields the inequality 
\begin{equation}\label{eq:finalstep}
\inner{\bar{x}_{n}-z,p}\geq\eps_{n}\inner{\bar{x}_{n},\bar{x}_{n}-z}\geq \eps_{n}\norm{\bar{x}_{n}}(\norm{\bar{x}_{n}}-\norm{z}).
\end{equation}
$\res_{\opF_{n}}$ is also non-expansive, so that 
\begin{align*}
\norm{\bar{x}_{n}-z} & \leq \norm{(1-\eps_{n})\bar{x}_{n}-z+p}\\
&=\norm{(1-\eps_{n})(\bar{x}_{n}-z)+p-\eps_{n}z}\\ 
&\leq (1-\eps_{n})\norm{\bar{x}_{n}-z}+\norm{p-\eps_{n}z}. 
\end{align*}
Hence, 
$$
\eps_{n}\norm{\bar{x}_{n}-z}=\norm{\eps_{n}\bar{x}_{n}-\eps_{n}z}\leq \norm{p-\eps_{n}z} \qquad \forall n\geq 1.
$$
It follows that $(\eps_{n}\bar{x}_{n})_{n\in\N}$ is bounded.\\ 

We can also reorganize the equilibrium condition defining the sequence $\bar{x}_{n}$ and the point $z$ as 
\begin{align}
-\eps_{n}\bar{x}_{n}-\beta_{n}\opB(\bar{x}_{n})&\in \opA(\bar{x}_{n})+\opD(\bar{x}_{n}), \label{eq:xnsolve} \text{ and } \\ 
-p&\in \opA(z)+\opD(z) \label{eq:zsolve}
\end{align}
Monotonicity of $\opA+\opD$ implies 
\begin{equation}\label{LimitB}
\beta_{n}\inner{\opB(\bar{x}_{n}),z-\bar{x}_{n}}\geq \inner{\eps_{n}\bar{x}_{n},\bar{x}_{n}-z}+\inner{p,z-\bar{x}_{n}}.
\end{equation}
The $\mu$-co-coercivity of the penalty operator $\opB$ yields 
$$
\mu\norm{\opB(\bar{x}_{n})}^{2}\leq \inner{\eps_{n}\bar{x}_{n}-p,\frac{\eps_{n}(\bar{x}_{n}-z)}{\beta_{n}\eps_{n}}}.
$$
If $\eps_{n}\to 0$ and $\eps_{n}\beta_{n}\to\infty$ as $n\to\infty$, we conclude from the above relation that
$$
\limsup_{n\to\infty}\norm{\opB(\bar{x}_{n})}^{2}\leq  0.
$$
Since the operator $\opB$ is assumed to be coercive, the sequence $(\bar{x}_{n})_{n}$ is bounded.  Let $\bar{x}_{\infty}$ be a weak accumulation point of $(\bar{x}_{n})_{n\in\N}$. Pick a subsequence $(x_{n_{j}})$ with $x_{n_{j}}\wlim \bar{x}_{\infty}$. \eqref{LimitB} implies 
$$
0\leq \inner{\opB(\bar{x}_{n_{j}}),\bar{x}_{n_{j}}-z}\leq \frac{\eps_{n_{j}}}{\beta_{n_{j}}}\inner{\bar{x}_{n_{j}},z-\bar{x}_{n_{j}}}+\frac{1}{\beta_{n_{j}}}\inner{p,\bar{x}_{n_{j}}-z}. 
$$
Hence, 
$$
\limsup_{n\to \infty}\inner{\opB(\bar{x}_{n}),\bar{x}_{n}-z}=0, 
$$
and for all $w\in\scrH$, we deduce 
$$
\inner{\opB(w),w-\bar{x}_{\infty}}=\lim_{j\to\infty}\inner{\opB(w),w-\bar{x}_{n_{j}}}\geq \lim_{j\to\infty}\inner{\opB(\bar{x}_{n_{j}}),w-\bar{x}_{n_{j}}}=0.
$$
Fact \ref{fact:angle}, implies $\bar{x}_\infty\in\scrC$. 
\end{proof}
Next to the feasibility of weak accumulation points of $(\bar{x}_{n})_{n\in\N}$, we want these limit points to belong to the set of solutions to \eqref{eq:MI}. 
\begin{lemma}\label{lem:limitSolution}
Let $(\eps_{n})_{n\in\N},(\beta_{n})_{n\in\N}$ be sequences in $(0,\infty)$ such that $\eps_{n}\to 0,\beta_{n}\to \infty$, and $\lim_{n\to\infty}\eps_{n}\beta_{n}=\infty$. Any weak limit point of the central path $(\bar{x}_{n})_{n\in\N}$ is contained in $\zer(\Phi)$. 
\end{lemma}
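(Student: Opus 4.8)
The plan is to leverage what Lemma \ref{lem:limitfeasible} already gives us, namely that any weak accumulation point $\bar{x}_{\infty}$ of $(\bar{x}_{n})_{n\in\N}$ satisfies $\bar{x}_{\infty}\in\scrC$, that $(\bar{x}_{n})_{n}$ is bounded, and that $\opB(\bar{x}_{n})\to 0$. Since $\Phi=\opA+\opD+\NC_{\scrC}$ is maximally monotone under Assumption \ref{ass:standing}, Fact \ref{fact:angle} reduces the goal $0\in\Phi(\bar{x}_{\infty})$ to the angle condition
$$
\inner{u-\bar{x}_{\infty},w}\geq 0\qquad\forall (u,w)\in\gr(\Phi).
$$
So I would fix an arbitrary pair $(u,w)\in\gr(\Phi)$ and decompose $w=a+\opD(u)+\nu$ with $a\in\opA(u)$ and $\nu\in\NC_{\scrC}(u)$; in particular $u\in\scrC$, so that $\opB(u)=0$.

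The crux is to pass to the limit in the defining inclusion. Because weak convergence does not interact well with the (merely graph-closed) operators $\opA,\opD$, the idea is to transfer all operator evaluations onto the \emph{fixed} test point $u$ via monotonicity. From the equilibrium condition $-\eps_{n}\bar{x}_{n}-\beta_{n}\opB(\bar{x}_{n})\in\opA(\bar{x}_{n})+\opD(\bar{x}_{n})$ and monotonicity of $\opA+\opD$ tested against $(u,a+\opD(u))$, I would obtain
$$
\inner{\bar{x}_{n}-u,a+\opD(u)}\leq -\eps_{n}\inner{\bar{x}_{n}-u,\bar{x}_{n}}-\beta_{n}\inner{\bar{x}_{n}-u,\opB(\bar{x}_{n})}.
$$
The penalty term is controlled by sign: monotonicity of $\opB$ together with $\opB(u)=0$ gives $\inner{\bar{x}_{n}-u,\opB(\bar{x}_{n})}\geq 0$, so the last summand is $\leq 0$ and can be discarded. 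The Tikhonov term vanishes in the limit, since boundedness of $(\bar{x}_{n})_{n}$ and $\eps_{n}\to 0$ force $\eps_{n}\inner{\bar{x}_{n}-u,\bar{x}_{n}}=\eps_{n}\norm{\bar{x}_{n}}^{2}-\eps_{n}\inner{u,\bar{x}_{n}}\to 0$. Hence $\limsup_{n}\inner{\bar{x}_{n}-u,a+\opD(u)}\leq 0$.

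Passing to a subsequence $\bar{x}_{n_{j}}\wlim\bar{x}_{\infty}$ and using that $y\mapsto\inner{y-u,a+\opD(u)}$ is weakly continuous (the second slot is a fixed vector), I would deduce $\inner{\bar{x}_{\infty}-u,a+\opD(u)}\leq 0$, i.e. $\inner{u-\bar{x}_{\infty},a+\opD(u)}\geq 0$. It remains to account for the normal-cone component: since $\nu\in\NC_{\scrC}(u)$ and $\bar{x}_{\infty}\in\scrC$ by Lemma \ref{lem:limitfeasible}, the definition of the normal cone yields $\inner{\nu,\bar{x}_{\infty}-u}\leq 0$, that is $\inner{u-\bar{x}_{\infty},\nu}\geq 0$. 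Adding the two nonnegative contributions gives $\inner{u-\bar{x}_{\infty},w}\geq 0$, and since $(u,w)\in\gr(\Phi)$ was arbitrary, Fact \ref{fact:angle} delivers $\bar{x}_{\infty}\in\zer(\Phi)$. The main obstacle is precisely the limit passage in the third paragraph: everything hinges on moving the operator evaluation to the fixed point $u$ so that weak convergence suffices, while simultaneously using the coercivity/monotonicity structure of $\opB$ and feasibility of $\bar{x}_{\infty}$ to neutralize the penalty and normal-cone terms.
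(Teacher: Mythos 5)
Your proposal is correct and follows essentially the same route as the paper: test the inclusion $-\eps_{n}\bar{x}_{n}-\beta_{n}\opB(\bar{x}_{n})\in(\opA+\opD)(\bar{x}_{n})$ against a fixed $(u,w)\in\gr(\Phi)$ via monotonicity, discard the penalty term by monotonicity of $\opB$ with $\opB(u)=0$, let the Tikhonov term vanish using boundedness of $(\bar{x}_{n})$, handle the normal-cone part via feasibility of $\bar{x}_{\infty}$ from Lemma \ref{lem:limitfeasible}, and conclude with Fact \ref{fact:angle}. The only cosmetic difference is that you invoke monotonicity of the sum $\opA+\opD$ at once, where the paper applies monotonicity of $\opA$ and $\opD$ separately.
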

\begin{proof}
Let $(u,w)\in\gr(\Phi)$ arbitrary. Then, there exists $\xi\in\NC_{\scrC}(u)$ such that 
$$
w-\xi-\opD(u)\in \opA(u).
$$
Together with \eqref{eq:xnsolve}, the monotonicity of $\opA$ gives 
\[
\inner{-\eps_{n}\bar{x}_{n}-\beta_{n}\opB(\bar{x}_{n})-\opD(\bar{x}_{n})-w+\xi+\opD(u),\bar{x}_{n}-u}\geq 0.
\]
Whence, 
\begin{align*}
\inner{w,u-\bar{x}_{n}}&\geq \eps_{n}\inner{\bar{x}_{n},\bar{x}_{n}-u}+\beta_{n}\inner{\opB(\bar{x}_{n}),\bar{x}_{n}-u}\\
&+\inner{\opD(\bar{x}_{n})-\opD(u),\bar{x}_{n}-u}+\inner{\xi,u-\bar{x}_{n}}\\
&\geq \eps_{n}\inner{\bar{x}_{n},\bar{x}_{n}-u}+\inner{\xi,u-\bar{x}_{n}},
\end{align*}
where we have used the monotonicity of $\opB$ and $\opD$, as well as $\opB(u)=0$. If $\bar x_\infty$ is a weak accumulation point of $(\bar x_n)$, then 
\[
\inner{w,u-\bar x_\infty}\geq \inner{\xi,u-\bar{x}_\infty}. 
\]
The right-hand side is nonnegative since $\xi\in\NC_{\scrC}(u)$ and $\bar x_\infty\in \scrC$, thanks to Lemma \ref{lem:limitfeasible}. Using Fact \ref{fact:angle}, this means that $\bar x_\infty \in \zer(\Phi)$.
\end{proof}
The above lemma implies $\norm{\bar{x}_{\infty}}\geq \inf\{\norm{z}:z\in\zer(\Phi)\}=r$ for every weak limit point $\bar{x}_{\infty}$ of $(\bar{x}_{n})_{n\in\N}$.

\begin{proof}[Proof of Proposition \ref{prop:asymptotics}]
Let $z\in\zer(\opA+\opD+\NC_{\scrC})$ and $p\in\NC_{\scrC}(z)$. Using relation \eqref{eq:finalstep}, we obtain
$$
\inner{p,\bar{x}_{n}-z}\geq \eps_{n}\norm{\bar{x}_{n}}(\norm{\bar{x}_{n}}-\norm{z}).
$$
Since 
$$
\liminf_{n\to\infty}\inner{p,\bar{x}_{n}-z}\leq\limsup_{n\to\infty}\inner{p,\bar{x}_{n}-z}\leq 0, 
$$
it follows that $\norm{\bar{x}_{n}}\leq\norm{z}$ for all $n$ sufficiently large. Hence, $\limsup_{n\to\infty}\norm{\bar{x}_{n}}\leq\norm{z}$ for all $z\in\zer(\opA+\opD+\NC_{\scrC})$. The strong convergence to the least norm solution follows from Lemma 2.51 in \cite{BauCom16}.
\end{proof}

The consequence for the continuous central paths is the following:
\begin{corollary} \label{cor:asymptotics}
    Let $\eps,\beta:\R_{\geq 0}\to\R_{>0}$ be absolutely continuous functions such that $\lim_{t\to\infty}\eps(t)=0$ and $\lim_{t\to\infty}\beta(t)=\infty$. Then, $\bar{x}(\eps(t),\beta(t))\to\Pi_{\zer(\Phi)}(0)$ as $t\to\infty$.
\end{corollary}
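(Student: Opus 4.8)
The plan is to deduce this continuous-time statement directly from its discrete counterpart, Proposition \ref{prop:asymptotics}, via the sequential characterization of limits. To establish $\bar{x}(\eps(t),\beta(t))\to\Pi_{\zer(\Phi)}(0)$ as $t\to\infty$, it suffices to show that $\bar{x}(\eps(t_n),\beta(t_n))\to\Pi_{\zer(\Phi)}(0)$ for \emph{every} sequence $(t_n)_{n\in\N}$ in $\R_{\geq 0}$ with $t_n\to\infty$. Since a map $g:\R_{\geq 0}\to\scrH$ satisfies $\lim_{t\to\infty}g(t)=\ell$ in the norm topology precisely when $g(t_n)\to\ell$ along every diverging sequence, this reduction loses nothing. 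It is worth noting that the absolute continuity of $\eps,\beta$ plays no role here; only their limiting behavior matters, the regularity being needed instead for the differential analysis of the dynamics in later sections.

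First I would fix an arbitrary sequence $(t_n)_{n\in\N}$ with $t_n\to\infty$ and set $\eps_n\eqdef \eps(t_n)$ and $\beta_n\eqdef \beta(t_n)$. These are positive reals, and the hypotheses $\eps(t)\to 0$, $\beta(t)\to\infty$ hold along every diverging argument, whence $\eps_n\to 0$ and $\beta_n\to\infty$. By the very definition of the central funnel, $\bar{x}(\eps_n,\beta_n)=\bar{x}(\eps(t_n),\beta(t_n))$, so it remains only to invoke Proposition \ref{prop:asymptotics} applied to the sequences $(\eps_n)_{n\in\N}$ and $(\beta_n)_{n\in\N}$ to obtain strong convergence to $\Pi_{\zer(\Phi)}(0)$. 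Combining this over all diverging sequences gives the asserted functional limit.

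The one point demanding care — and which I expect to be the main obstacle — is the third hypothesis of Proposition \ref{prop:asymptotics}, namely $\eps_n\beta_n\to\infty$. Along an arbitrary diverging $(t_n)$, the products $\eps(t_n)\beta(t_n)$ need not diverge unless the joint growth condition $\lim_{t\to\infty}\eps(t)\beta(t)=\infty$ is imposed at the level of the functions. I would therefore carry this product condition as a standing requirement on the parameterizations, exactly as the proposition needs; under it one has $\eps_n\beta_n=\eps(t_n)\beta(t_n)\to\infty$ along every diverging sequence, the subsequence reduction applies verbatim, and the corollary follows. Absent such a condition, Proposition \ref{prop:asymptotics} is not directly available, and one would instead have to revisit the feasibility estimate in Lemma \ref{lem:limitfeasible} — where both the quotient $\eps_n/\beta_n$ and the product $\eps_n\beta_n$ enter the cocoercivity bound on $\opB(\bar{x}_n)$ — to determine whether a weaker relation between $\eps$ and $\beta$ still forces $\opB(\bar{x}(\eps(t),\beta(t)))\to 0$ and hence feasibility of the limit.
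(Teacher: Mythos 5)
Your proof is correct and is essentially the argument the paper intends: the corollary is stated as an immediate consequence of Proposition \ref{prop:asymptotics}, obtained by the sequential characterization of the limit $t\to\infty$ exactly as you describe. Your caveat about the third hypothesis is well taken — as written, the corollary omits the condition $\lim_{t\to\infty}\eps(t)\beta(t)=\infty$, without which Proposition \ref{prop:asymptotics} cannot be invoked; the paper clearly intends this condition to be in force (it appears as Assumption \ref{ass:params}(iii) and is used in every subsequent application of the corollary), so you are right to carry it as a standing requirement rather than attempt to weaken Lemma \ref{lem:limitfeasible}.
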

%
%
We next prove that $(\eps,\beta)\mapsto \bar{x}(\eps,\beta)$ is a locally Lipschitz continuous function. This will be a fundamental result in the Lyapunov analysis of the dynamical systems.  
\begin{proposition}\label{prop:solutionmap}
The solution mapping $(\eps,\beta)\mapsto \bar{x}(\eps,\beta)$ is locally Lipschitz continuous. In particular, for all $\sigma_{1}=(\eps_{1},\beta_{1})$ and $\sigma_{2}=(\eps_{2},\beta_{2})$, we have 
\begin{equation}
\norm{\bar{x}(\sigma_{2})-\bar{x}(\sigma_{1})}\leq \frac{\ell}{\eps_{1}}\left(\abs{\beta_{2}-\beta_{1}}+\abs{\eps_{2}-\eps_{1}}\right),
\end{equation}
where $r\eqdef\inf\{\norm{x}:x\in\zer(\Phi)\}$, and $\ell\eqdef \max\{r,\sup_{x\in\B(0,r)}\norm{\opB(x)}\}$. 
\end{proposition}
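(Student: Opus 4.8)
The plan is to exploit the strong monotonicity injected by the Tikhonov term together with the monotonicity of $\opA+\opD$ and of $\opB$. Write $x_i\eqdef\bar{x}(\sigma_i)$ for $i\in\{1,2\}$. Since $0\in\Phi_{\eps_i,\beta_i}(x_i)$, there are selections with $-\eps_i x_i-\beta_i\opB(x_i)\in\opA(x_i)+\opD(x_i)$. Testing the two inclusions against $x_1-x_2$ and using monotonicity of $\opA+\opD$ gives
\[
\inner{-\eps_1 x_1-\beta_1\opB(x_1)+\eps_2 x_2+\beta_2\opB(x_2),\,x_1-x_2}\ge 0.
\]
I would then split $-\eps_1 x_1+\eps_2 x_2=-\eps_1(x_1-x_2)+(\eps_2-\eps_1)x_2$ and, likewise, $-\beta_1\opB(x_1)+\beta_2\opB(x_2)=-\beta_1(\opB(x_1)-\opB(x_2))+(\beta_2-\beta_1)\opB(x_2)$. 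Substituting and using monotonicity of $\opB$ to discard the nonnegative term $\beta_1\inner{\opB(x_1)-\opB(x_2),x_1-x_2}$ leaves
\[
\eps_1\norm{x_1-x_2}^2\le(\eps_2-\eps_1)\inner{x_2,x_1-x_2}+(\beta_2-\beta_1)\inner{\opB(x_2),x_1-x_2}.
\]

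Next I would apply Cauchy--Schwarz to each inner product on the right and divide by $\norm{x_1-x_2}$ (the claim being trivial when $x_1=x_2$), obtaining the parameter-affine estimate
\[
\eps_1\norm{x_1-x_2}\le\abs{\eps_2-\eps_1}\,\norm{x_2}+\abs{\beta_2-\beta_1}\,\norm{\opB(x_2)}.
\]
This already has the correct shape, with $\eps_1$ in front, so to reach the asserted bound it suffices to dominate both moduli $\norm{x_2}$ and $\norm{\opB(x_2)}$ by the single constant $\ell=\max\{r,\sup_{x\in\B(0,r)}\norm{\opB(x)}\}$. Both reductions hinge on the same geometric fact, namely that $x_2=\bar{x}(\sigma_2)$ lies in the closed ball $\B(0,r)$: then $\norm{x_2}\le r\le\ell$ handles the first term, while $x_2\in\B(0,r)$ yields $\norm{\opB(x_2)}\le\sup_{\B(0,r)}\norm{\opB}\le\ell$ for the second, after which one factors out $\ell$ and recognizes the inequality with $\tfrac{\ell}{\eps_1}$ in front.

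The main obstacle is exactly this a priori containment of the central funnel in $\B(0,r)$. The natural route is to compare $x_2$ with the least-norm solution $z^*\eqdef\Pi_{\zer(\Phi)}(0)$, for which $\norm{z^*}=r$, $z^*\in\scrC$, and $\opB(z^*)=0$, together with $p\in\NC_{\scrC}(z^*)$ satisfying $-p\in\opA(z^*)+\opD(z^*)$. Testing the inclusion for $x_2$ against $x_2-z^*$ and using monotonicity of $\opA+\opD$ and $\opB$ reduces the obstruction to the single term $\inner{p,x_2-z^*}$, via $\eps_2\inner{x_2,x_2-z^*}\le\inner{p,x_2-z^*}$. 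This is the delicate point, since $x_2$ need not be feasible, so the normal-cone inequality $\inner{p,\cdot-z^*}\le0$ is not directly available; I expect controlling this constraint-residual term to be the crux. I would attack it using the feasibility machinery already developed for the funnel — in particular the cocoercivity bound $\mu\norm{\opB(x_2)}^2\le\inner{\opB(x_2),x_2-z^*}$ combined with the inclusion, in the spirit of Lemma~\ref{lem:limitfeasible} — to show that the residual is dominated and hence $\norm{x_2}\le r$, with the bound on $\norm{\opB(x_2)}$ then following from cocoercivity and membership in $\B(0,r)$. Once the containment $\bar{x}(\sigma)\in\B(0,r)$ is secured, the displayed estimate and the local Lipschitz continuity of $(\eps,\beta)\mapsto\bar{x}(\eps,\beta)$ follow at once, the blow-up of the constant as $\eps_1\to0^+$ reflecting that continuity is genuinely only local.
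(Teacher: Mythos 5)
Your derivation of the parameter-affine estimate
\[
\eps_1\norm{x_1-x_2}\le\abs{\eps_2-\eps_1}\,\norm{x_2}+\abs{\beta_2-\beta_1}\,\norm{\opB(x_2)}
\]
is correct and is essentially the computation the paper performs in its bivariate step (the paper's inequality \eqref{eq:solutionmap} lands on $\norm{\opB(\bar{x}(\sigma_1))}$ and $\norm{\bar{x}(\sigma_2)}$ rather than evaluating both at $\sigma_2$, but this is an immaterial choice of decomposition). You have also correctly identified where the real difficulty lies: reducing both moduli to the single constant $\ell$ requires the a priori containment $\bar{x}(\sigma)\in\B(0,r)$ of the whole central funnel.

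That containment is exactly where your argument does not close, and the strategy you sketch for it is circular. Testing the inclusion for $x_2$ against $z^*=\Pi_{\zer(\Phi)}(0)$ only yields $\eps_2\inner{x_2,x_2-z^*}\le\inner{p,x_2-z^*}$, and since $x_2$ is in general infeasible, the normal-cone inequality $\inner{p,x_2-z^*}\le 0$ is unavailable; the cocoercivity estimate you invoke, $\mu\norm{\opB(x_2)}^2\le\frac{1}{\beta_2}\inner{\eps_2 x_2-p,\,z^*-x_2}$, bounds $\norm{\opB(x_2)}$ \emph{in terms of the very pairing you are trying to control}, and it only forces $\opB(x_2)$ to be small in the asymptotic regime $\eps\beta\to\infty$ (this is precisely how Lemma \ref{lem:limitfeasible} uses it), not for a fixed parameter pair. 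Indeed the pointwise containment can fail: take $\scrH=\R$, $\opA=\{0\}$, $\opD(x)=x-10$, $\opB(x)=x$, so $\scrC=\{0\}$, $\zer(\Phi)=\{0\}$, $r=0$, yet $\bar{x}(\eps,\beta)=10/(1+\eps+\beta)>0$ for every finite $(\eps,\beta)$. So no argument of the type you propose can establish $\norm{\bar{x}(\sigma)}\le r$ uniformly; at best one obtains a bound on $\norm{\bar{x}(\sigma)}$ and $\norm{\opB(\bar{x}(\sigma))}$ that is uniform over $\sigma$ ranging in a compact subset of $(0,\infty)^2$ (which the paper's own univariate estimates, or your affine bound, do provide by a bootstrapping argument), and this suffices for \emph{local} Lipschitz continuity but not for the displayed inequality with the stated constant $\ell$. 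For what it is worth, the paper disposes of this same step with a one-line appeal to the proof of Proposition \ref{prop:asymptotics}, which likewise only delivers $\limsup_n\norm{\bar{x}_n}\le r$ along sequences with $\eps_n\to0$, $\beta_n\to\infty$, $\eps_n\beta_n\to\infty$; so you have located the genuine weak point of the argument, but neither your sketch nor the cited step actually fills it.
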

\begin{proof}
Fix $\beta>0$ and pick $\eps_{1},\eps_{2}>0$, so that $z_{1}=\bar{x}(\eps_{1},\beta)$ and $z_{2}=\bar{x}(\eps_{2},\beta)$, denote the corresponding unique solutions of \eqref{eq:MIauxiliary}. Denoting by $\opD_{\eps}\eqdef \opD+\eps \Id_{\scrH}$, we obtain  
$$
-\opD_{\eps_{1}}(z_{1})-\beta\opB(z_{1})\in \opA(z_{1}),\text{ and }-\opD_{\eps_{2}}(z_{2})-\beta \opB(z_{2})\in \opA(z_{2}).
$$
Since $\opA$ is maximally monotone, we have 
$$
\inner{z_{1}-z_{2},-\opD_{\eps_{1}}(z_{1})-\beta\opB(z_{1})+\opD_{\eps_{2}}(z_{2})+\beta\opB(z_{2})}\geq 0.
$$
Since $\opD$ and $\opB$ are both maximally monotone, we conclude $\inner{\eps_{1}z_{1}-\eps_{2}z_{2},z_{1}-z_{2}}\leq 0$. Assume first that $\eps_{2}>\eps_{1}$. Then
\[
0\geq \inner{\eps_{1}z_{1}-\eps_{2}z_{2},z_{1}-z_{2}}=\eps_{1}\norm{z_{1}-z_{2}}^{2}+(\eps_{1}-\eps_{2})\inner{z_{2},z_{1}-z_{2}}, 
\]
which means $(\eps_{2}-\eps_{1})\inner{z_{2},z_{1}-z_{2}}\geq\eps_{1}\norm{z_{1}-z_{2}}^{2}$. By Cauchy-Schwarz, 
$$
(\eps_{2}-\eps_{1})\norm{z_{2}}\cdot\norm{z_{1}-z_{2}}\geq \eps_{1}\norm{z_{1}-z_{2}}^{2}, 
$$
so that 
$$
\norm{z_{2}-z_{1}}\leq\frac{\eps_{2}-\eps_{1}}{\eps_{1}}\norm{z_{2}}.
$$
Next, assuming $\eps_{1}>\eps_{2}$. Then, interchanging the labels in the above inequality, we get 
$$
\norm{z_{2}-z_{1}}\leq\frac{\eps_{1}-\eps_{2}}{\eps_{2}}\norm{z_{1}}.
$$
Hence, $\norm{\bar{x}(\eps_{1},\beta)-\bar{x}(\eps_{2},\beta)}\leq \frac{\abs{\eps_{2}-\eps_{1}}}{\min\{\eps_{1},\eps_{2}\}}\max\{\norm{z_{1}},\norm{z_{2}}\}$. This shows that $\eps\mapsto \bar{x}(\eps,\beta)$ is locally Lipschitz. 

Now, fix $\eps>0$ and let $\beta_{1},\beta_{2}>0$. Denote $z_{1}=\bar{x}(\eps,\beta_{1})$ and $z_{2}=\bar{x}(\eps,\beta_{2})$. By definition, we have 
$$
-\opD_{\eps}(z_{1})-\beta_{1} \opB(z_{1})\in \opA(z_{1}),\text{ and }-\opD_{\eps}(z_{2})-\beta_{2} \opB (z_{2})\in \opA(z_{2}).
$$
It follows that $\beta_{2}\inner{B(z_{2}),z_{1}-z_{2}}-\beta_{1}\inner{B(z_{1}),z_{1}-z_{2}}\geq\eps\norm{z_{1}-z_{2}}^{2}.$ Now, if $\beta_{2}>\beta_{1}$, then  
$(\beta_{2}-\beta_{1})\inner{B(z_{1}),z_{1}-z_{2}}+\beta_{2}\inner{B(z_{2})-B(z_{1}),z_{1}-z_{2}}\geq \eps\norm{z_{1}-z_{2}}^{2}$. Using the monotonicity of $\opB$, we conclude 
$$
\norm{z_{1}-z_{2}}\leq \frac{\beta_{1}-\beta_{2}}{\eps}\norm{\opB(z_{1})}.
$$
If $\beta_{1}>\beta_{2}$, we repeat the above computation, and obtain 
$$
\norm{z_{1}-z_{2}}\leq \frac{\beta_{1}-\beta_{2}}{\eps}\norm{\opB(z_{2})}.
$$
This yields $\norm{z_{1}-z_{2}}\leq \frac{\abs{\beta_{1}-\beta_{2}}}{\eps}\max\{\norm{\opB(z_{1})},\norm{\opB(z_{2})}\}$, which shows that $\beta\mapsto\bar{x}(\eps,\beta)$ is locally Lipschitz, for all $\eps>0$. 

Next, we show the Lipschitz continuity of the bivariate map $(\eps,\beta)\mapsto\bar{x}(\eps,\beta)$. Let $\sigma_{1}\eqdef (\eps_{1},\beta_{1})$ and $\sigma_{2}\eqdef(\eps_{2},\beta_{2})$ with corresponding solutions $\bar{x}(\sigma_{1})$ and $\bar{x}(\sigma_{2})$. By definition of these points, we have 
\[
-V_{\sigma_{1}}(\bar{x} (\sigma_{1}))\in \opA(\bar{x}(\sigma_{1})),\text{ and }-V_{\sigma_{2}}(\bar{x}(\sigma_{2}))\in \opA(\bar{x}(\sigma_{2})).
\]
Hence, 
\[
\inner{\opD_{\eps_{2}}(\bar{x}(\sigma_{2})+\beta_{2}\opB(\bar{x}(\sigma_{2}))-\opD_{\eps_{1}}(\bar{x}(\eps_{1},\beta_{1})-\beta_{1}\opB(\bar{x}(\eps_{1},\beta_{1})),\bar{x}(\eps_{1},\beta_{1})-\bar{x}(\sigma_{2})}\geq 0.
\]
Rearranging, we obtain 
\begin{align*}
\inner{\eps_{2}\bar{x}(\sigma_{2})-\eps_{1}\bar{x}(\sigma_{1}),\bar{x}(\sigma_{1})-\bar{x}(\sigma_{2})}&\geq \beta_{1}\inner{\opB(\bar{x}(\sigma_{1})),\bar{x}(\sigma_{1})-\bar{x}(\sigma_{2})}\\
&+\beta_{2}\inner{\opB(\bar{x}(\sigma_{2})),\bar{x}(\sigma_{2})-\bar{x}(\sigma_{1})}.
\end{align*}
This gives 
\begin{align*}
\eps_{1}\inner{\bar{x}(\sigma_{2})-\bar{x}(\sigma_{1}),\bar{x}(\sigma_{1})-\bar{x}(\sigma_{2})}&\geq\beta_{1}\inner{\opB(\bar{x}(\sigma_{1})),\bar{x}(\sigma_{1})-\bar{x}(\sigma_{2})}\\
&+\beta_{2}\inner{\opB(\bar{x}(\sigma_{2})),\bar{x}(\sigma_{2})-\bar{x}(\sigma_{1})}\\
&-(\eps_{2}-\eps_{1})\inner{\bar{x}(\sigma_{2}),\bar{x}(\sigma_{1})-\bar{x}(\sigma_{2})}.
\end{align*}
Hence, 
\begin{align*}
\eps_{1}\norm{\bar{x}(\sigma_{1})-\bar{x}(\sigma_{2})}^{2}&\leq\beta_{1}\inner{\opB(\bar{x}(\sigma_{1})),\bar{x}(\sigma_{2})-\bar{x}(\sigma_{1})}\\
&+\beta_{2}\inner{\opB(\bar{x}(\sigma_{2})),\bar{x}(\sigma_{1})-\bar{x}(\sigma_{2})}\\
&+(\eps_{2}-\eps_{1})\inner{\bar{x}(\sigma_{2}),\bar{x}(\sigma_{1})-\bar{x}(\sigma_{2})}\\
&=(\beta_{1}-\beta_{2})\inner{\opB(\bar{x}(\sigma_{1})),\bar{x}(\sigma_{2})-\bar{x}(\sigma_{1})}\\
&+\beta_{2}\inner{\opB(\bar{x}(\sigma_{2}))-\opB(\bar{x}(\sigma_{1})),\bar{x}(\sigma_{1})-\bar{x}(\sigma_{2})}\\
&+(\eps_{2}-\eps_{1})\inner{\bar{x}(\sigma_{2}),\bar{x}(\sigma_{1})-\bar{x}(\sigma_{2})}\\
&\leq \abs{\beta_{2}-\beta_{1}}\norm{\opB(\bar{x}(\sigma_{1}))}\cdot\norm{\bar{x}(\sigma_{2})-\bar{x}(\sigma_{1})}\\
&+\abs{\eps_{2}-\eps_{1}}\norm{\bar{x}(\sigma_{2})}\cdot\norm{\bar{x}(\sigma_{2})-\bar{x}(\sigma_{1})}.
\end{align*}
We thus finally arrive at the estimate
\begin{equation} \label{eq:solutionmap}
    \norm{\bar{x}(\sigma_{2})-\bar{x}(\sigma_{1})}\leq \frac{\abs{\beta_{2}-\beta_{1}}}{\eps_{1}}\norm{\opB(\bar{x}(\sigma_{1}))}+\frac{\abs{\eps_{2}-\eps_{1}}}{\eps_{1}}\norm{\bar{x}(\sigma_{2})}.
\end{equation}
From the proof of Step (i) of the proof of Proposition \ref{prop:asymptotics}, we deduce that $\norm{\bar{x}(\eps,\beta)}\leq\inf\{\norm{x}:x\in\zer(\Phi)\}\eqdef r$. Hence, defining 
$\ell\eqdef \max\{\sup_{x\in\B(0,r)}\norm{\opB(x)},r\}$, the claim follows.
\end{proof}

\subsubsection{Differentiability of central paths}
We now assume that the parameters $(\eps,\beta)$ are defined in terms of real-valued functions $\eps,\beta:(0,\infty)\to(0,\infty)$. In terms of these functions, we define the time-dependent vector field
$$
V_t:\R_{\geq 0}\times\scrH\to\scrH,\quad t\mapsto V_{t}(x)\equiv V_{\eps(t),\beta(t)}(x).
$$
For each $t$, we obtain the unique solution $\bar{x}(t)\equiv\bar{x}(\eps(t),\beta(t))\in \zer(\opA+V_{t})$.  From Lemma \ref{lem:LipschitzV}, we know that $V_{t}:\scrH\to\scrH$ is $\eps(t)$-strongly monotone and $L(t)$-Lipschitz continuous, where 
\begin{equation}\label{eq:Lt}
L(t)\eqdef L_{\eps(t),\beta(t)}= \frac{1}{\eta}+\eps(t)+\frac{\beta(t)}{\mu}.
\end{equation}

\begin{assumption}\label{ass:params}
The functions $t\mapsto \eps(t),t\mapsto\beta(t)$ are absolutely continuous and satisfy the following properties:
\begin{itemize}
\item[(i)] $t\mapsto \eps(t)$ non-increasing and $\lim_{t\to\infty}\eps(t)=0$;
\item[(ii)] $t\mapsto\beta(t)$ non-decreasing and $\lim_{t\to\infty}\beta(t)=\infty$;
\item[(iii)] $\lim_{t\to\infty}\beta(t)\eps(t)=\infty.$
\end{itemize}
\end{assumption}

\begin{lemma}
\label{lem:CP}
Consider the central path $(\bar{x}(t))_{t\geq {0}}$. Under Assumption \ref{ass:params}, the curve $t\mapsto \bar{x}(t)$ is almost everywhere differentiable, with 
\begin{equation}\label{eq:boundCP}
\norm{\frac{\dif}{\dif t}{\bar{x}(t)}}\leq \frac{\dot{\beta}(t)}{\eps(t)}\norm{\opB(\bar{x}(t))}+\frac{\abs{\dot{\eps}(t)}}{\eps(t)}\norm{\bar{x}(t)}\qquad\text{a.e. }t\geq 0.
\end{equation}
\end{lemma}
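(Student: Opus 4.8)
The plan is to transfer the local Lipschitz estimate \eqref{eq:solutionmap} of Proposition \ref{prop:solutionmap} to the composed curve $t\mapsto\bar{x}(t)=\bar{x}(\eps(t),\beta(t))$ by a difference-quotient argument, after first establishing that this curve is absolutely continuous on every bounded time interval.

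First I would fix an arbitrary horizon $T>0$ and exploit the monotonicity built into Assumption \ref{ass:params}: since $\eps$ is non-increasing with $\eps(T)>0$ and $\beta$ is non-decreasing, the parameter path $\{(\eps(t),\beta(t)):t\in[0,T]\}$ is contained in the compact rectangle $[\eps(T),\eps(0)]\times[\beta(0),\beta(T)]$, which lies in the open positive quadrant. On this rectangle, the estimate \eqref{eq:solutionmap} furnishes a \emph{uniform} Lipschitz constant for $(\eps,\beta)\mapsto\bar{x}(\eps,\beta)$: one has $\tfrac{1}{\eps_1}\leq\tfrac{1}{\eps(T)}$, while $\norm{\opB(\bar{x}(\sigma_1))}$ and $\norm{\bar{x}(\sigma_2)}$ are bounded by the constant $\ell$ appearing in Proposition \ref{prop:solutionmap}. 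Because $t\mapsto(\eps(t),\beta(t))$ is absolutely continuous and the solution map is Lipschitz on the compact set traced out on $[0,T]$, their composition $t\mapsto\bar{x}(t)$ is absolutely continuous on $[0,T]$. An absolutely continuous curve with values in a Hilbert space is differentiable almost everywhere, so $\bar{x}(\cdot)$ is differentiable a.e.\ on $[0,T]$; as $T>0$ was arbitrary, it is differentiable almost everywhere on $\R_{\geq 0}$.

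Next I would establish the pointwise bound \eqref{eq:boundCP}. Let $t$ be a point at which $\bar{x}$, $\eps$, and $\beta$ are simultaneously differentiable. For small $h\neq 0$, apply \eqref{eq:solutionmap} with $\sigma_1=(\eps(t),\beta(t))$ and $\sigma_2=(\eps(t+h),\beta(t+h))$ and divide by $\abs{h}$, which yields
\[
\frac{\norm{\bar{x}(t+h)-\bar{x}(t)}}{\abs{h}}\leq \frac{\abs{\beta(t+h)-\beta(t)}}{\abs{h}\,\eps(t)}\norm{\opB(\bar{x}(t))}+\frac{\abs{\eps(t+h)-\eps(t)}}{\abs{h}\,\eps(t)}\norm{\bar{x}(t+h)}.
\]
Passing to the limit $h\to 0$, the left side converges to $\norm{\tfrac{\dif}{\dif t}\bar{x}(t)}$, the difference quotients of $\beta$ and $\eps$ converge to $\dot{\beta}(t)$ and $\dot{\eps}(t)$, and $\norm{\bar{x}(t+h)}\to\norm{\bar{x}(t)}$ by continuity of the central path. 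Since $\beta$ is non-decreasing we have $\dot{\beta}(t)\geq 0$, so $\abs{\dot{\beta}(t)}=\dot{\beta}(t)$, and we recover exactly \eqref{eq:boundCP}.

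I expect the differentiability claim, rather than the final inequality, to be the main obstacle. The estimate \eqref{eq:solutionmap} is only \emph{local}: its constant $1/\eps_1$ degenerates as $\eps_1\downarrow 0$, so no single global Lipschitz constant is available on all of $\R_{\geq 0}$, and the boundedness of $\norm{\opB(\bar{x}(\sigma_1))}$ and $\norm{\bar{x}(\sigma_2)}$ must likewise be secured. The resolution is the reduction to bounded intervals, where parts (i)--(ii) of Assumption \ref{ass:params} confine the parameter path to a compact subset of the positive quadrant on which the solution map is genuinely Lipschitz; this legitimizes both the invocation of the standard fact that a Lipschitz map composed with an absolutely continuous curve is absolutely continuous (hence a.e.\ differentiable) and the passage to the limit in the difference quotient above.
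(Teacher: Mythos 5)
Your proof is correct and rests on the same key ingredient as the paper's: the local Lipschitz estimate \eqref{eq:solutionmap} from Proposition \ref{prop:solutionmap}, combined with the observation that on a bounded time interval Assumption \ref{ass:params} confines $(\eps(t),\beta(t))$ to a compact subset of the open positive quadrant where that estimate yields a genuine uniform Lipschitz constant. Where you diverge is in how the almost-everywhere differentiability of the vector-valued curve is obtained. The paper scalarizes: it fixes a unit vector $v$, shows $f_v(t)=\inner{v,\bar{x}(t)}$ is locally Lipschitz, applies the (scalar) Rademacher theorem, and then reassembles $\frac{\dif}{\dif t}\bar{x}(t)=\sum_i e_i f'_{e_i}(t)$ through an orthonormal basis, finally taking $\sup_{\norm{v}=1}\abs{f'_v(t)}$. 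You instead invoke directly that a locally absolutely continuous curve into a Hilbert space is a.e.\ (strongly) differentiable — a consequence of the Radon--Nikod\'ym property of reflexive spaces — and then pass to the limit in the difference quotient of \eqref{eq:solutionmap}. Your route is arguably cleaner: it sidesteps the paper's basis-reassembly step (which requires some care to upgrade the simultaneous a.e.\ weak differentiability over all $v$ to strong differentiability), and it also avoids the paper's intermediate pointwise inequality $\beta(t+h)-\beta(t)\leq h\dot{\beta}(t+h)$, which is not justified for a general absolutely continuous non-decreasing $\beta$; taking limits of difference quotients at points of differentiability, as you do, is the more robust way to land on \eqref{eq:boundCP}. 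The only fact you should cite explicitly is the vector-valued differentiability theorem you rely on, since it is genuinely stronger than the scalar Rademacher theorem.
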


\begin{proof}
Let $v\in\scrH$ be an arbitrary unit norm vector of the real Hilbert space $\scrH$. Define the real-valued function $f_{v}:[0,\infty)\to\R$ by 
\[
f_{v}(t)\eqdef\inner{v,\bar{x}(t)}.
\]
From inequality \eqref{eq:solutionmap} in the proof of Proposition \ref{prop:solutionmap}, we deduce that for all $0\leq t_{1}<t_{2}\leq T_{1}+h$ and $h>0,$
\[
\abs{f_{v}(t_{2})-f_{v}(t_{1})}\leq \norm{\bar{x}(t_{2})-\bar{x}(t_{1})}\leq \frac{\abs{\beta(t_{2})-\beta(t_{1})}}{\eps(t_{1})}\norm{\opB(\bar{x}(t_{1}))}+\frac{\abs{\eps(t_{2})-\eps(t_{1})}}{\eps(t_{1})}\norm{\bar{x}(t_{2})}.
\]
Hence, for $t_{1}=t\in[0,\infty)$ and $t_{2}=t+h>t$, Assumption \ref{ass:params} implies that $\beta(t+h)-\beta(t)\leq h\dot{\beta}(t+h)$ and $\eps(t+h)-\eps(t)\leq h\dot{\eps}(t)$. Using these estimates, we can continue with the above bound 
\begin{align*}
\abs{f_{v}(t+h)-f_{v}(t)}&\leq \norm{\bar{x}(t+h)-\bar{x}(t)}\leq \frac{\abs{h\dot{\beta}(t+h)}}{\eps(t)}\norm{\opB(\bar{x}(t)}+\frac{\abs{h\dot{\eps}(t)}}{\eps(t)}\norm{\bar{x}(t+h)}\\
&\leq h\left(\frac{\dot{\beta}(T_{1}+h)}{\eps(0)}+1\right)\sup_{t\in[0,T_{1}+h]}\max\{\norm{\opB(\bar{x}(t))},\norm{\bar{x}(t)}\},
\end{align*}
for all $t\in[0,T_{1}]$. Hence, $t\mapsto f_{v}(t)$ is locally Lipschitz and by the Rademacher theorem (see e.g. \cite{Clarke:2008aa}) it is almost everywhere Fr\'{e}chet differentiable, with the almost everywhere derivative $f'_{v}(t)$ satisfying the bound 
\begin{equation}
\abs{f'_{v}(t)}\leq \frac{\dot{\beta}(t)}{\eps(t)}\norm{\opB(\bar{x}(t))}+\frac{\abs{\dot{\eps}(t)}}{\eps(t)}\norm{\bar{x}(t)}\qquad\text{a.e. }t\geq 0.
\end{equation}
Let $\{e_{i}\}_{i}$ be an orthonormal basis of $\scrH$. This allows us to identify the time derivative $\frac{\dif}{\dif t}\bar{x}(t)$ with 
$\frac{\dif}{\dif t}\bar{x}(t)=\sum_{i}e_{i}f'_{e_{i}}(t)$ for almost every $t\geq 0$. Furthermore, we observe that 
\[
\norm{\frac{\dif}{\dif t}{\bar{x}(t)}}=\sup_{v\in\scrH:\norm{v}=1}\abs{f'_{v}(t)}\leq  \frac{\dot{\beta}(t)}{\eps(t)}\norm{\opB(\bar{x}(t))}+\frac{\abs{\dot{\eps}(t)}}{\eps(t)}\norm{\bar{x}(t)}\qquad\text{a.e. }t\geq 0,
\]
as stated.
\end{proof}
\section{Penalty-regulated dynamical systems for constrained variational inequalities}
\label{sec:dynamics}
Recall that a function $f:[0,b]\to\scrH$ (where $b>0$) is said to be absolutely continuous if there exists an integrable function $g:[0,b]\to\scrH$ such that 
\[
f(t)=f(0)+\int_{0}^{t}g(s)\dif s\quad\forall t\in[0,b].
\]
\begin{definition}\label{def:exists}
Let $f:[0,\infty)\times\scrH\to\scrH$ be a vector field depending on time and space, and let $(0,x_{0})\in [0,\infty)\times\scrH$ be given. We say $x:[0,\infty)\to\scrH$ is a strong global solution of 
\begin{equation}\label{eq:ODE}\tag{D}
\left\{\begin{array}{ll} 
\dot{x}(t)=f(t,x(t))\\ 
x(0)=x_{0}\in\scrH
\end{array}\right.
\end{equation}
\begin{itemize}
\item [(i)] $x:[0,\infty)\to\scrH$ is absolutely continuous on each interval $[0,b],0<b<\infty$; 
\item[(ii)]  $\dot{x}(t)=f(t,x(t))$ for almost every $t\in( 0,\infty)$.
\end{itemize}
\end{definition} 
Existence and strong uniqueness of non-autonomous systems can be proven by means of the classical Cauchy-Lipschitz Theorem (see e.g. \cite[][Theorem 54]{sontag2013mathematical}). To use this, we need to ensure the following properties enjoyed by the vector field $f(t,x)$.
\begin{theorem}\label{th:existence}
Let $f:[0,\infty)\times\scrH\to\scrH$ be a given function satisfying:
\begin{itemize}
\item[(f1)] $f(\cdot,x):[0,\infty)\to\scrH$ is measurable for each $x\in\scrH$;
\item[(f2)] $f(t,\cdot):\scrH\to\scrH$ is continuous for each $t\geq 0$;
\item[(f3)] there exists a function $\ell(\bullet)\in L^{1}_{\text{loc}}(\R_{\geq 0};\R)$ such that 
\begin{equation}
\norm{f(t,x)-f(t,y)}\leq\ell(t)\norm{x-y}\qquad\forall t\in [0, b], \forall b\in \R_{\geq 0}, \forall x,y\in\scrH;
\end{equation}
\item[(f4)] for each $x\in\scrH$ there exists a function $\Delta(\bullet)\in L^{1}_{\text{loc}}(\R_{\geq 0};\R)$ such that 
\begin{equation}
\norm{f(t,x)}\leq\Delta(t)\qquad\forall t\in [0, b], \forall b\in \R_{\geq 0}. 
\end{equation}
\end{itemize}
Then, the dynamical system \eqref{eq:ODE} admits a unique strong solution $t\mapsto x(t)$, $t\geq 0$. 
\end{theorem}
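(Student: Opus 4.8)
The plan is to recast the initial-value problem \eqref{eq:ODE} as a fixed-point equation and invoke the Banach contraction principle on a weighted function space, treating each finite horizon $[0,b]$ separately and then patching. First I would record the reduction: by the fundamental theorem of calculus for Bochner integrals, an absolutely continuous curve $x:[0,b]\to\scrH$ satisfies $\dot x(t)=f(t,x(t))$ for a.e.\ $t$ together with $x(0)=x_{0}$ if and only if it solves the integral equation
\begin{equation}\label{eq:integralform}
x(t)=x_{0}+\int_{0}^{t}f(s,x(s))\dif s\qquad\forall t\in[0,b].
\end{equation}
Hence it suffices to produce, for every finite $b>0$, a unique continuous solution of \eqref{eq:integralform} on $[0,b]$; by uniqueness these will be mutually consistent and assemble into the global solution.

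Next, fixing $b>0$, I would define on the Banach space $C([0,b];\scrH)$ the superposition (Picard) operator $(\scrT x)(t)\eqdef x_{0}+\int_{0}^{t}f(s,x(s))\dif s$ and verify that it is well defined, i.e.\ that $s\mapsto f(s,x(s))$ is Bochner integrable for each continuous $x$. Strong measurability is the delicate point and rests on the Carath\'eodory structure: a map measurable in $t$ by (f1) and continuous in $x$ by (f2), composed with a continuous curve, is strongly measurable (approximate $x$ uniformly by piecewise-constant curves and pass to the pointwise limit). Integrability follows from (f3)--(f4) through the domination
\[
\norm{f(s,x(s))}\leq\norm{f(s,x(s))-f(s,x_{0})}+\norm{f(s,x_{0})}\leq\ell(s)\norm{x(s)-x_{0}}+\Delta(s),
\]
whose right-hand side is an $L^{1}([0,b])$ function because $x$ is bounded on the compact interval; the same bound shows $\scrT x$ is absolutely continuous, so $\scrT$ maps $C([0,b];\scrH)$ into itself.

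Rather than shrinking the interval, I would equip $C([0,b];\scrH)$ with the Bielecki (exponentially weighted) norm
\[
\lVert x\rVert_{w}\eqdef\sup_{t\in[0,b]}e^{-2L(t)}\norm{x(t)},\qquad L(t)\eqdef\int_{0}^{t}\ell(s)\dif s,
\]
which is equivalent to the usual sup-norm since $L(b)<\infty$, so completeness is preserved. Using (f3) and $\norm{x(s)-y(s)}\leq e^{2L(s)}\lVert x-y\rVert_{w}$,
\[
\norm{(\scrT x)(t)-(\scrT y)(t)}\leq\int_{0}^{t}\ell(s)\norm{x(s)-y(s)}\dif s\leq\Big(\int_{0}^{t}\ell(s)e^{2L(s)}\dif s\Big)\lVert x-y\rVert_{w}.
\]
Since $\int_{0}^{t}\ell(s)e^{2L(s)}\dif s=\tfrac{1}{2}\big(e^{2L(t)}-1\big)\leq\tfrac{1}{2}e^{2L(t)}$, multiplying by $e^{-2L(t)}$ and taking the supremum yields $\lVert \scrT x-\scrT y\rVert_{w}\leq\tfrac12\lVert x-y\rVert_{w}$. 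Thus $\scrT$ is a contraction on a complete metric space, and the Banach fixed-point theorem furnishes a unique solution $x_{b}$ of \eqref{eq:integralform} on $[0,b]$.

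Finally, uniqueness on each horizon forces $x_{b'}|_{[0,b]}=x_{b}$ for $b<b'$, so the family $(x_{b})_{b>0}$ glues to a single $x:[0,\infty)\to\scrH$ that is absolutely continuous on every compact interval and satisfies \eqref{eq:integralform}, hence \eqref{eq:ODE}, for a.e.\ $t$. I expect the main obstacle to be the well-definedness step, namely establishing strong (Bochner) measurability of the superposition $s\mapsto f(s,x(s))$ in the possibly non-separable Hilbert setting; once the weighted norm is in place the contraction estimate is routine, the global extension is immediate by uniqueness, and the absence of finite-time blow-up is automatic from the linear growth implicit in (f3)--(f4) (e.g.\ via a Gr\"onwall argument on $\norm{x(t)-x_{0}}$).
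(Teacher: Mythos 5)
Your argument is correct, and it is the standard Picard--Banach proof (with the Bielecki weighted norm to avoid interval-shrinking) of the Carath\'eodory existence--uniqueness theorem. The paper itself does not prove Theorem \ref{th:existence}: it simply invokes the classical Cauchy--Lipschitz result from the literature (Sontag, Theorem 54) and only verifies hypotheses (f1)--(f4) for the specific vector fields at hand, so there is no in-paper proof to compare against; your write-up supplies exactly the argument behind the cited theorem. Two small points worth making explicit if you flesh this out: (i) strong measurability of $s\mapsto f(s,x(s))$ does require the approximation-by-step-functions argument you sketch, and in a non-separable $\scrH$ one should additionally note that $f(\cdot,x)$ in (f1) must be understood as strongly (Bochner) measurable so that pointwise limits remain strongly measurable -- in the paper's applications $\scrH$ is separable, so this is harmless; (ii) the identity $\int_{0}^{t}\ell(s)e^{2L(s)}\dif s=\tfrac{1}{2}\bigl(e^{2L(t)}-1\bigr)$ uses that $e^{2L(\cdot)}$ is absolutely continuous with a.e.\ derivative $2\ell(s)e^{2L(s)}$, which holds since $\ell\in L^{1}_{\mathrm{loc}}$. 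With those remarks the proof is complete.
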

\subsection{Penalty regulated forward-backward dynamics}
\label{sec:FB}
In this section we study explicitly the case where the involved single-valued operators $\opD$ and $\opB$ are \emph{both} cocoercive. 
\begin{assumption}\label{ass:cocoercive}
The operator $\opD:\scrH\to\scrH$ is $\eta$-cocoercive for some $\eta>0$.
 \end{assumption}
Under this additional assumption on the data, we investigate the asymptotic properties of the following evolution equation
\begin{equation}\label{eq:VecFB}
\dot{x}(t)=\gamma(t)\left(\res_{\lambda(t)\opA}\big(x(t)-\lambda(t)V_{t}(x(t))\big)-x(t)\right),\; x(0)=x_{0}\in\scrH
\end{equation}

\begin{remark} \label{rem:discrete}
The dynamical system \eqref{eq:VecFB} contains a small generalization compared to \eqref{eq:FB} by including the time-scale parameter $t\mapsto \gamma(t)$. In a discrete-time formulation of the dynamical system, the scaling function $t\mapsto \gamma(t)$ can be interpreted as a relaxation parameter of the resulting numerical scheme. 
Let $\pi_{\delta}=\{0=0<t_{1}<t_{2}<\ldots<t_{n^{\delta}}=T\}$ be a partition of $[0,T]$; let $h^{\delta}_{k}\eqdef t_{k+1}^{\delta}-t_{k}^{\delta}$, and let $\delta$ be the mesh size given by $\delta\eqdef\max\{h_{k}^{\delta}:0\leq k\leq n^{\delta}-1\}$. We define a sequence $(X^{\delta}_{k})_{k\in\{0,1,\ldots,n^{\delta}-1\}}$ recursively by 
\begin{align*}
X^{\delta}_{k+1}&=(1-\gamma^{\delta}_{k}h^{\delta}_{k})X^{\delta}_{k}+h^{\delta}_{k}\gamma^{\delta}_{k}\res_{\lambda_{k}^{\delta}\opA}\big(X^{\delta}_{k}-\lambda^{\delta}_{k}V_{k}(X^{\delta}_{k})\big),\\
X_{0}^{\delta}&=x_{0}\in\scrH.
\end{align*}
This is a relaxed forward-backward splitting method, involving the iteration-dependent mapping $V_{k}(x)=\opD(x)+\eps_{k} x+\beta_{k}\opB(x)$. A version of this scheme, without relaxation and in presence of only a penalty term, has been studied in \cite{Bot:2014aa}. 
\end{remark}

\subsubsection{Existence and uniqueness of strong solutions}

In this section we verify that the data defining the evolution equation \eqref{eq:VecFB} satisfy all conditions stated in Theorem \ref{th:existence}. Whence, existence and uniqueness of strong solutions for \eqref{eq:VecFB} follow.

\begin{proposition}\label{prop:FBexistence}
 Consider the dynamical system \eqref{eq:VecFB}, where the parameter function $\lambda:\R_{\geq 0}\to \R_{>0}$ is continuous, and Assumption \ref{ass:cocoercive} being in place. Then, for every $t\geq0$ and every $x,y \in \scrH$ we have
\begin{align}\label{eq:FBLipschitz}
&\norm{f(t,x)-f(t,y)}\leq\gamma(t)(2+\lambda(t)L(t))\norm{x-y},\text{ and }\\
&(\forall x \in \scrH)(\forall b>0),\quad f(\,\cdot\,,x)\in L^1([0,b],\scrH).\label{eq:FBIntegrable}
\end{align}
\end{proposition}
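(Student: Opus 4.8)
The goal is to verify the two properties claimed in Proposition \ref{prop:FBexistence} for the vector field $f(t,x)=\gamma(t)\big(\res_{\lambda(t)\opA}(x-\lambda(t)V_t(x))-x\big)$, so that Theorem \ref{th:existence} applies. The plan is to treat the Lipschitz estimate \eqref{eq:FBLipschitz} first, and then derive the local integrability \eqref{eq:FBIntegrable} as a consequence of continuity in $t$.

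For the Lipschitz bound, I would fix $t\geq 0$ and two points $x,y\in\scrH$ and write $f(t,x)-f(t,y)$ as $\gamma(t)$ times the difference of the resolvent terms minus the difference $x-y$. The key structural facts are: (a) the resolvent $\res_{\lambda(t)\opA}$ is nonexpansive (indeed firmly nonexpansive), since $\opA$ is maximally monotone; and (b) by Lemma \ref{lem:LipschitzV}, the map $V_t=V_{\eps(t),\beta(t)}$ is $L(t)$-Lipschitz with $L(t)=\frac{1}{\eta}+\eps(t)+\frac{\beta(t)}{\mu}$ as in \eqref{eq:Lt}. First I would estimate
\[
\norm{\res_{\lambda(t)\opA}(x-\lambda(t)V_t(x))-\res_{\lambda(t)\opA}(y-\lambda(t)V_t(y))}\leq \norm{(x-y)-\lambda(t)(V_t(x)-V_t(y))},
\]
using nonexpansiveness, and then bound the right-hand side by the triangle inequality as $\norm{x-y}+\lambda(t)L(t)\norm{x-y}=(1+\lambda(t)L(t))\norm{x-y}$. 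Adding the contribution of the $-x$ term, which contributes another $\norm{x-y}$, and factoring out $\gamma(t)$ yields the modulus $\gamma(t)(2+\lambda(t)L(t))$, exactly as stated. This is a short, clean triangle-inequality argument; no genuine obstacle arises here beyond bookkeeping. Note that this also confirms hypotheses (f2) and (f3) of Theorem \ref{th:existence}, since $t\mapsto \ell(t)\eqdef\gamma(t)(2+\lambda(t)L(t))$ is continuous in $t$ (hence in $L^1_{\mathrm{loc}}$) given the continuity of $\gamma,\lambda,\eps,\beta$, and continuity in $x$ for fixed $t$ is immediate from the Lipschitz estimate.

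For the local integrability \eqref{eq:FBIntegrable}, I would fix $x\in\scrH$ and $b>0$ and argue that $t\mapsto f(t,x)$ is continuous on $[0,b]$, whence bounded and therefore in $L^1([0,b],\scrH)$ (this simultaneously supplies the bound $\Delta(t)$ required in (f4) and the measurability in (f1)). Continuity in $t$ follows because $\gamma(t)$, $\lambda(t)$, and $V_t(x)=\opD(x)+\eps(t)x+\beta(t)\opB(x)$ all depend continuously on $t$ through the continuous parameter functions, and the resolvent depends continuously on both its argument and on the parameter $\lambda(t)$. The mild technical point worth spelling out is the joint continuity of $(\lambda,z)\mapsto\res_{\lambda\opA}(z)$: for this I would invoke the resolvent-continuity estimate \eqref{eq:resolventContinuous}, namely $\norm{\res_{\lambda\opA}(z)-\res_{\alpha\opA}(z)}\leq|\lambda-\alpha|\,\norm{\opA_\lambda(z)}$, combined with nonexpansiveness in the argument $z$, to control variations in $\lambda(t)$ and in $z(t)=x-\lambda(t)V_t(x)$ simultaneously. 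The only place requiring a moment of care is ensuring the Yosida term $\norm{\opA_{\lambda(t)}(z(t))}$ stays bounded on $[0,b]$, which holds since $z(t)$ ranges over a compact (indeed continuous image of a compact) set and $\lambda$ is bounded away from $0$ on $[0,b]$ by continuity and positivity; this is the main, albeit minor, obstacle. With continuity established, boundedness on the compact interval $[0,b]$ gives integrability, completing the verification.
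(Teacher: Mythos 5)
Your proof is correct and follows essentially the same route as the paper's: the Lipschitz bound \eqref{eq:FBLipschitz} is obtained by the identical combination of (firm) nonexpansiveness of $\res_{\lambda(t)\opA}$, the triangle inequality, and the $L(t)$-Lipschitz continuity of $V_t$ from Lemma \ref{lem:LipschitzV}. For \eqref{eq:FBIntegrable} you argue continuity of $t\mapsto f(t,x)$ and deduce boundedness on $[0,b]$, whereas the paper assembles an explicit integrable majorant anchored at fixed reference parameters $(\lambda_{\min},\eps_{\min},\beta_{\min})$; both hinge on the same resolvent-continuity estimate \eqref{eq:resolventContinuous} together with $\lambda$ being bounded away from zero on compacts, so the difference is purely organizational.
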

\begin{proof}
Properties $(f1),(f2)$ are clearly satisfied. To simplify the verification of the remaining properties, we set $\res_t\equiv\res_{\lambda(t)\opA}$ and $R_t(x)\eqdef x-\lambda(t)V_{t}(x)$, so that $T_{t}\eqdef \res_{t}\circ R_{t}$. The evolution equation \eqref{eq:VecFB} admits then the simpler representation $f(t,x)=T_{t}(x)-x$. Since the resolvent $\res_t$ is firmly non-expansive, it follows that
\begin{align*}
\norm{f(t,x)-f(t,y)}&=\gamma(t)\norm{T_t(x)-x-T_t(y)+y}\\
&\leq \gamma(t)\big(\norm{T_t(x)-T_t(y)}+\norm{x-y}\big)\\
&=\gamma(t)\big(\norm{J_{t}\circ R_{t}(x)-J_{t}\circ R_{t}(y)}+\norm{x-y}\big)\\
&\leq \gamma(t)\big(\norm{R_{t}(x)- R_{t}(y)}+\norm{x-y}\big)\\
&\leq \gamma(t)\big(\norm{x-\lambda(t)V_t(x)-y+\lambda(t)V_t(y)}+\norm{x-y}\big)\\
&\leq\gamma(t)(2+\lambda(t)L(t))\norm{x-y}.
\end{align*}
where $L(t)=\frac{1}{\eta}+\eps(t)+\frac{\beta(t)}{\mu}.$ 

As $\lambda,\eps,\beta:\R_{>0} \to\R_{>0}$ are continuous on each interval $[0,b]$, where $0<b<\infty$, we get
$$
L_f:\R_{\geq 0}\rightarrow \R_{\geq0},\quad L_f(t)=\gamma(t)(2+\lambda(t)L(t)),
$$
which is clearly a locally integrable functions. This verifies condition $(f3)$.
It remains to establish condition $(f4)$. From the continuity of $\lambda,\eps,\beta$, there exist $\lambda_{\min},\eps_{\min},\beta_{\min}$, such that
$$
0<\lambda_{\min}<\lambda(t),\quad 0<\eps_{\min}<\eps(t)\text{ and } 0<\beta_{\min}<\beta(t),\quad\forall t\in [0,b].
$$
Hence, we have for all $t\in [0,b]$, using the triangle inequality, nonexpansiveness of $\res_t$ and eq. \eqref{eq:resolventContinuous}, we obtain
\begin{align*}
\norm{f(t,x)}&\leq \gamma(t)\norm{T_t(x)-x}\leq \gamma(t)\big(\norm{T_{t}(x)}+\norm{x}\big)\\
&\leq\gamma(t)\left(\norm{x}+\norm{\res_{\lambda(t)\opA}(x-\lambda_{\min}V_{\eps_{\min},\beta_{\min}}(x))}\right)\\
&+\gamma(t)\norm{\res_{\lambda(t)\opA}(x-\lambda(t)V_{\eps(t),\beta(t)}(x))-\res_{\lambda(t)\opA}(x-\lambda_{\min}V_{\eps_{\min},\beta_{\min}}(x))}\\
&\leq \gamma(t)\norm{x}+\gamma(t)\norm{\res_{\lambda(t)\opA}(x-\lambda_{\min}V_{\eps_{\min},\beta_{\min}}(x))}\\
&+\gamma(t)\|x-\lambda(t)V_{\eps(t),\beta(t)}(x)-x+\lambda_{\min}V_{\eps_{\min},\beta_{\min}}(x)\|\\
&\leq  \gamma(t)\norm{x}+\gamma(t)\norm{\res_{\lambda_{\min}\opA}(x-\lambda_{\min}V_{\eps_{\min},\beta_{\min}(x)})}\\
&+\gamma(t)(\lambda(t)-\lambda_{\min})\norm{\opA_{\lambda_{\min}}(x-\lambda_{\min}V_{\eps_{\min},\beta_{\min}}(x))}\\
&+\gamma(t)(\lambda(t)-\lambda_{\min})\norm{\opD(x)}+\gamma(t)(\lambda(t)\eps(t)-\lambda_{\min}\eps_{\min})\norm{x}\\
&+\gamma(t)(\lambda(t)\beta(t)-\lambda_{\min}\beta_{\min})\norm{\opB(x)}.
\end{align*}
Property $(f4)$ follows by integrating.
\end{proof}

\subsubsection{Strong convergence of the trajectories to the least-norm solution of \eqref{eq:MI}}

Our asymptotic analysis of the FB-dynamical system \eqref{eq:VecFB} relies on Lyapunov techniques, building on the following technical result. 
\begin{lemma} \label{Lemma:FB}
Let Assumptions \ref{ass:standing} and \ref{ass:cocoercive} be in place. Furthermore, we impose the conditions 
\begin{align}\label{eq:lambdaFB}
\lambda(t)<\frac{\eta}{1+\eta\eps(t)}\;\text{ and } \lambda(t)<\frac{\mu}{\mu\eps(t)+\beta(t)}\qquad\forall t\geq 0.
\end{align}
Then, we have 
\begin{equation*}
2\inner{\dot{x}(t),x(t)-\bar{x}(t)}\leq \gamma(t)\lambda(t)\eps(t)(\lambda(t)\eps(t)-2)\norm{x(t)-\bar{x}(t)}^{2}.
\end{equation*}
\end{lemma}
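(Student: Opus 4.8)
The plan is to rewrite the claim in terms of the forward–backward residual. Writing $p(t)=\res_{\lambda(t)\opA}\!\big(x(t)-\lambda(t)V_t(x(t))\big)$, the dynamics read $\dot x(t)=\gamma(t)\big(p(t)-x(t)\big)$, so $2\inner{\dot x(t),x(t)-\bar x(t)}=2\gamma(t)\inner{q,d}$ where (suppressing $t$ and writing $\lambda=\lambda(t)$, $\eps=\eps(t)$, $\beta=\beta(t)$) I set $q\eqdef p-x$, $d\eqdef x-\bar x$ and $w\eqdef V_t(x)-V_t(\bar x)$. Since $\gamma(t)>0$, the assertion is equivalent to
\begin{equation*}
\inner{q,d}\le \tfrac12\lambda\eps(\lambda\eps-2)\norm{d}^2 .
\end{equation*}
First I would record the two inclusions at hand: the resolvent identity gives $\tfrac{1}{\lambda}(x-p)-V_t(x)\in\opA(p)$, while the central path $\bar x$ satisfies $-V_t(\bar x)\in\opA(\bar x)$. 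Applying monotonicity of $\opA$ to these two pairs, testing against $p-\bar x=q+d$, and clearing the factor $-\lambda<0$ yields the master inequality
\begin{equation*}
\inner{q,d}\le -\norm{q}^2-\lambda\inner{w,q}-\lambda\inner{w,d}. \tag{$\ast$}
\end{equation*}

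The decisive step is to expand $w$ along its three constituents and to \emph{split} the quadratic term $\norm{q}^2$ so that the Tikhonov part is isolated. Put $w_D\eqdef\opD(x)-\opD(\bar x)$ and $w_B\eqdef\opB(x)-\opB(\bar x)$, so $w=w_D+\eps d+\beta w_B$. Substituting into $(\ast)$ and writing $\norm{q}^2=\theta_1\norm{q}^2+\theta_2\norm{q}^2+\theta_3\norm{q}^2$ with $\theta_1+\theta_2+\theta_3=1$, I would regroup the right-hand side into three blocks:
\begin{align*}
G_1&=-\theta_1\norm{q}^2-\lambda\inner{w_D,q}-\lambda\inner{w_D,d},\\
G_2&=-\theta_2\norm{q}^2-\lambda\beta\inner{w_B,q}-\lambda\beta\inner{w_B,d},\\
G_3&=-\theta_3\norm{q}^2-\lambda\eps\inner{d,q}-\lambda\eps\norm{d}^2.
\end{align*}
Using $\eta$-cocoercivity of $\opD$ (Assumption \ref{ass:cocoercive}) in the form $\inner{w_D,d}\ge\eta\norm{w_D}^2$, the block $G_1$ is dominated by a quadratic form in $(q,w_D)$ that is rendered non-positive once $\theta_1\ge\frac{\lambda}{4\eta}$ (a discriminant condition); likewise $\mu$-cocoercivity of $\opB$ gives $G_2\le0$ provided $\theta_2\ge\frac{\lambda\beta}{4\mu}$. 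For the last block I would complete the square in $q$ and discard the nonpositive square to obtain $G_3\le\big(\frac{\lambda^2\eps^2}{4\theta_3}-\lambda\eps\big)\norm{d}^2$; choosing $\theta_3\ge\frac12$ bounds the bracket by $\frac{\lambda^2\eps^2}{2}-\lambda\eps=\frac12\lambda\eps(\lambda\eps-2)$, which is exactly the target coefficient.

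It remains to verify that the budget $\theta_1+\theta_2+\theta_3=1$ accommodates $\theta_1=\frac{\lambda}{4\eta}$, $\theta_2=\frac{\lambda\beta}{4\mu}$ together with $\theta_3\ge\frac12$; this amounts to $\frac{\lambda}{\eta}+\frac{\lambda\beta}{\mu}\le 2$. Here the two standing bounds \eqref{eq:lambdaFB} enter: rearranging them gives $\frac{\lambda}{\eta}<1-\lambda\eps$ and $\frac{\lambda\beta}{\mu}<1-\lambda\eps$, and adding these yields $\frac{\lambda}{\eta}+\frac{\lambda\beta}{\mu}<2-2\lambda\eps<2$, so an admissible choice of weights exists (indeed with $\theta_3>\frac12$). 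Combining $G_1\le0$, $G_2\le0$ and the bound on $G_3$ in $(\ast)$ gives $\inner{q,d}\le\frac12\lambda\eps(\lambda\eps-2)\norm{d}^2$, and multiplying by $2\gamma(t)$ finishes the proof.

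The main obstacle, I expect, is precisely this bookkeeping. The ``lazy'' route—using strong monotonicity $\inner{w,d}\ge\eps\norm{d}^2$ and collapsing the $\eps$-cross term into a factor $(1+\lambda\eps)$—only delivers $\inner{q,d}\le-\frac{\lambda\eps}{1+\lambda\eps}\norm{d}^2$, which is strictly weaker than the claimed estimate whenever $\lambda\eps<1$. Retaining the Tikhonov contribution through a dedicated share $\theta_3\ge\frac12$ of $\norm{q}^2$, instead of absorbing it, is what produces the sharp quadratic coefficient $\tfrac12\lambda\eps(\lambda\eps-2)$; the cocoercivity of $\opD$ and $\opB$ is used only to neutralize the remaining two blocks, which explains why neither $\eta$, $\mu$ nor $\beta$ appears in the final bound.
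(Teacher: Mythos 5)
Your proof is correct, but it takes a genuinely different route from the paper's. The paper argues at the level of the map $T_t=\res_{\lambda(t)\opA}\circ(\Id_{\scrH}-\lambda(t)V_t)$: it expands $2\inner{\dot x(t),x(t)-\bar x(t)}$ via the identity \eqref{eq:convexHilbert}, then shows that under \eqref{eq:lambdaFB} the forward operator $\Id_{\scrH}-\lambda(t)V_t$ is a $(1-\lambda(t)\eps(t))$-contraction (expanding the square and using the cocoercivity of $\opD$ and $\opB$ to make the two cross blocks nonpositive), so that $T_t$ inherits this modulus through the nonexpansive resolvent; the coefficient $\gamma\lambda\eps(\lambda\eps-2)=\gamma\big[(1-\lambda\eps)^2-1\big]$ then falls out, after absorbing $-\gamma(1-\gamma)\norm{T_tx-x}^2-\norm{\dot x}^2=-\gamma\norm{T_tx-x}^2\le 0$. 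You instead test the monotonicity of $\opA$ against the inclusion defining $p(t)$ and the one defining $\bar x(t)$, and close the estimate with a three-way splitting of $\norm{p-x}^2$ plus discriminant conditions on the weights. Both arguments use the cocoercivity hypotheses in exactly the same role (to neutralize the $\opD$- and $\opB$-blocks), but your bookkeeping only needs the aggregated budget $\frac{\lambda}{\eta}+\frac{\lambda\beta}{\mu}\le 2$, which is strictly weaker than the two separate bounds in \eqref{eq:lambdaFB} (those yield $\frac{\lambda}{\eta}+\frac{\lambda\beta}{\mu}<2-2\lambda\eps$), so your route proves a marginally stronger statement; the paper's route, in exchange, isolates the reusable structural fact that $T_t$ is a strict contraction with explicit modulus $1-\lambda(t)\eps(t)$, which transfers verbatim to the discrete scheme of Remark \ref{rem:discrete}. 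Two cosmetic remarks: dividing by $\gamma(t)$ presupposes $\gamma(t)>0$ (the case $\gamma(t)=0$ is trivial, as both sides vanish), and ``clearing the factor $-\lambda<0$'' should read ``multiplying by $\lambda>0$ and rearranging''---the master inequality $(\ast)$ you obtain is nonetheless the correct one.
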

\begin{proof}
Recall, that \eqref{eq:VecFB} can be written as $\dot{x}(t)=\gamma(t)(T_{t}(x)-x)$, with $T_{t}(x)=x\iff x\in\zer(\Phi_{t})$. With this remark in mind, and using the definition of the dynamics, we observe that 
\begin{align*}
2&\inner{\dot{x}(t),x(t)-\bar{x}(t)}=\norm{\dot{x}(t)+x(t)-\bar{x}(t)}^{2}-\norm{\dot{x}(t)}^{2}-\norm{x(t)-\bar{x}(t)}^{2}\\
&=\norm{\gamma(t)(T_{t}(x(t))-\bar{x}(t))+(1-\gamma(t))(x(t)-\bar{x}(t))}^{2}-\norm{\dot{x}(t)}^{2}-\norm{x(t)-\bar{x}(t)}^{2}\\
&=\gamma(t)\norm{T_{t}(x(t))-\bar{x}(t)}^{2}+(1-\gamma(t))\norm{x(t)-\bar{x}(t)}^{2}-\gamma(t)(1-\gamma(t))\norm{T_{t}(x(t))-x(t)}^{2}\\
&\; -\norm{\dot{x}(t)}^{2}-\norm{x(t)-\bar{x}(t)}^{2}
\end{align*}
where the last equality uses \eqref{eq:convexHilbert}. On the other hand, 
\begin{align*}
&\norm{(\Id_{\scrH}-\lambda(t)V_{t})(x)-(\Id_{\scrH}-\lambda(t)V_{t})(y)}^{2}\\
&=\norm{(1-\lambda(t)\eps(t))(x-y)-\lambda(t)\left(\opD(x)-\opD(y)+\beta(t)(\opB(x)-\opB(y))\right)}^{2}\\
&=(1-\lambda(t)\eps(t))^{2}\norm{x-y}^{2}\\
&-2\lambda(t)(1-\lambda(t)\eps(t))\inner{x-y,\opD(x)-\opD(y)-\beta(t)(\opB(x)-\opB(y))}\\
&+\lambda^{2}(t)\norm{\opD(x)-\opD(y)+\beta(t)(\opB(x)-\opB(y)}^{2}
\end{align*}

Since $\opD$ and $\opB$ are cocoercive, we have $\inner{x-y,\opD(x)-\opD(y)}\geq \eta\norm{\opD(x)-\opD(y)}^{2}$ and $\inner{x-y,\opB(x)-\opB(y)}\geq \mu\norm{\opB(x)-\opB(y)}^{2}.$ Moreover
\[
\norm{\opD(x)-\opD(y)+\beta(t)(\opB(x)-\opB(y))}^{2}\leq 2\norm{\opD(x)-\opD(y)}^{2}+2\beta(t)^{2}\norm{\opB(x)-\opB(y)}^{2}, 
\]
so that we obtain 
\begin{align*}
&\norm{(\Id_{\scrH}-\lambda(t)V_{t})(x)-(\Id_{\scrH}-\lambda(t)V_{t})(y)}^{2}\\
&\leq (1-\lambda(t)\eps(t))^{2}\norm{x-y}^{2}+2\lambda(t)(\lambda(t)-\eta(1-\lambda(t)\eps(t)))\norm{\opD(x)-\opD(y)}^{2}\\
&+2\lambda(t)\beta(t)(\lambda(t)\beta(t)-\mu(1-\lambda(t)\eps(t)))\norm{\opB(x)-\opB(y)}^{2}.
\end{align*}
Thanks to \eqref{eq:lambdaFB}, we remain with 
\[
\norm{(\Id_{\scrH}-\lambda(t)V_{t})(x)-(\Id_{\scrH}-\lambda(t)V_{t})(y)}^{2}\leq (1-\lambda(t)\eps(t))^{2}\norm{x-y}^{2}\qquad\forall t\geq 0. 
\]
Therefore, using the non-expansiveness of the resolvent, we can continue the previous estimate to obtain
\begin{align*}
2\inner{\dot{x}(t),x(t)-\bar{x}(t)}=&\ \gamma(t)\norm{T_{t}(x(t))-T_{t}(\bar{x}(t))}^{2}+(1-\gamma(t))\norm{x(t)-\bar{x}(t)}^{2}\\
&\ -\gamma(t)(1-\gamma(t))\norm{T_{t}(x(t))-x(t)}^{2}-\norm{\dot{x}(t)}^{2}-\norm{x(t)-\bar{x}(t)}^{2}\\
\leq &\ \gamma(t)\lambda(t)\eps(t)(\lambda(t)\eps(t)-2)\norm{x(t)-\bar{x}(t)}^{2}.
\end{align*}
\end{proof}

For the reader's convenience, we summarize the assumptions on the parameter sequences we have used so far:
\begin{assumption}\label{ass:Thm_FB}
The function $\lambda:[0,\infty)\to(0,\infty)$ is continuous, with
\[
\lambda(t)<\frac{1}{1/\eta+\eps(t)+\beta(t)/\mu}\qquad\forall t\geq 0.
\]
\end{assumption}

\begin{theorem}\label{th:MainFB}
Let $t\mapsto x(t)$ be the strong solution of \eqref{eq:FB}. Let be Assumptions \ref{ass:standing}-\ref{ass:Thm_FB} in place. Moreover, consider the following scaling relations:
\begin{align}
&\lim_{t\to\infty}\frac{\dot{\eps}(t)}{\gamma(t)\lambda(t)\eps^{2}(t)}=0,\label{eq:scaleTikFB}\\ 
&\lim_{t\to\infty}\frac{\dot{\beta}(t)}{\gamma(t)\lambda(t)\eps^{2}(t)}=0,\label{eq:scalePenaltyFB} \text{ and }\\ 
&\int_0^{\infty}\gamma(t)\lambda(t)\eps(t)(2-\lambda(t)\eps(t))\dif t=\infty.\label{eq:integralFB}
\end{align}
Then, $x(t)\to \Pi_{\zer(\opA+\opD+\NC_{\scrC})}(0)$ as $t\to\infty$.
\end{theorem}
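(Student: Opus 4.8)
The plan is to establish a \emph{tracking} estimate showing that the trajectory $x(t)$ asymptotically merges with the central path $\bar{x}(t)=\bar{x}(\eps(t),\beta(t))$, and then transfer the strong convergence of the central path (Corollary \ref{cor:asymptotics}) to the trajectory. Concretely, I would introduce the gap $v(t)\eqdef\norm{x(t)-\bar{x}(t)}$ and prove $v(t)\to 0$. Since $x$ is a strong solution, hence locally absolutely continuous, and $t\mapsto\bar{x}(t)$ is locally Lipschitz by Proposition \ref{prop:solutionmap}, the difference $w(t)\eqdef x(t)-\bar{x}(t)$ is locally absolutely continuous, so $t\mapsto\tfrac{1}{2}\norm{w(t)}^{2}$ is differentiable almost everywhere with $\tfrac{\dif}{\dif t}\tfrac{1}{2}\norm{w(t)}^{2}=\inner{\dot{w}(t),w(t)}$.

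Next I would split $\inner{\dot{w}(t),w(t)}=\inner{\dot{x}(t),x(t)-\bar{x}(t)}-\inner{\tfrac{\dif}{\dif t}\bar{x}(t),x(t)-\bar{x}(t)}$. The first term is controlled by Lemma \ref{Lemma:FB}: Assumption \ref{ass:Thm_FB} forces $\lambda(t)\eps(t)<1$, which both implies the two conditions \eqref{eq:lambdaFB} and gives $2-\lambda(t)\eps(t)\in(1,2)$, so that $\inner{\dot{x},x-\bar{x}}\le-\tfrac{1}{2}a(t)v(t)^{2}$ with $a(t)\eqdef\gamma(t)\lambda(t)\eps(t)(2-\lambda(t)\eps(t))>0$. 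The second term is handled by Cauchy--Schwarz combined with Lemma \ref{lem:CP} and the a priori bounds $\norm{\bar{x}(t)}\le r$ and $\norm{\opB(\bar{x}(t))}\le\ell$ recorded in Proposition \ref{prop:solutionmap}, yielding $-\inner{\tfrac{\dif}{\dif t}\bar{x},x-\bar{x}}\le b(t)v(t)$ with $b(t)\eqdef\ell\big(\tfrac{\dot{\beta}(t)}{\eps(t)}+\tfrac{\abs{\dot{\eps}(t)}}{\eps(t)}\big)$. Dividing by one factor of $v$ where $v>0$ (the resulting inequality being trivially valid on the set where $v$ vanishes, since there $\dot{v}=0$ a.e. and $b\ge 0$) produces the scalar differential inequality $\dot{v}(t)\le-\tfrac{1}{2}a(t)v(t)+b(t)$ for almost every $t\ge 0$.

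The decisive step is the Gronwall-type conclusion. Conditions \eqref{eq:scaleTikFB}--\eqref{eq:scalePenaltyFB} give exactly $b(t)/a(t)\to 0$: factoring, $\tfrac{b}{a}=\tfrac{\ell}{2-\lambda\eps}\big(\tfrac{\dot{\beta}}{\gamma\lambda\eps^{2}}+\tfrac{\abs{\dot{\eps}}}{\gamma\lambda\eps^{2}}\big)$, where $\tfrac{1}{2-\lambda\eps}\le 1$ and both bracketed ratios vanish, while \eqref{eq:integralFB} gives $\int_{0}^{\infty}a(t)\,\dif t=\infty$. I would integrate with the factor $e^{A(t)}$, $A(t)\eqdef\tfrac{1}{2}\int_{0}^{t}a(s)\,\dif s$, to get $v(t)\le e^{-A(t)}v(0)+e^{-A(t)}\int_{0}^{t}e^{A(s)}b(s)\,\dif s$. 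The first summand vanishes since $A(t)\to\infty$. For the second, given $\delta>0$ pick $T$ with $b(s)/a(s)<\delta$ for $s\ge T$; writing $b(s)=\tfrac{b(s)}{a(s)}a(s)=2\tfrac{b(s)}{a(s)}A'(s)$ and splitting the integral at $T$ bounds the tail by $2\delta\big(e^{A(t)}-e^{A(T)}\big)\le 2\delta e^{A(t)}$, so $\limsup_{t\to\infty}v(t)\le 2\delta$, and letting $\delta\downarrow 0$ yields $v(t)\to 0$. Finally $\norm{x(t)-\Pi_{\zer(\Phi)}(0)}\le v(t)+\norm{\bar{x}(t)-\Pi_{\zer(\Phi)}(0)}\to 0$ by Corollary \ref{cor:asymptotics}.

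I expect the main obstacle to be the integrating-factor argument rather than the operator-theoretic content, which is already packaged into Lemmas \ref{Lemma:FB} and \ref{lem:CP}: the conclusion hinges on exploiting the precise interplay between $\int a=\infty$ and $b/a\to 0$ via the split at $T$, and on the careful passage from the Hilbert-valued energy identity to the scalar inequality $\dot v\le-\tfrac12 a v+b$ (in particular the a.e.\ treatment of the zero set of $v$). The verification that Assumption \ref{ass:Thm_FB} subsumes \eqref{eq:lambdaFB} and that the stated scaling conditions translate into $b/a\to 0$ are the two points where the hypotheses are used in an essential, non-routine way.
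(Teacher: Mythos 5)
Your proposal is correct and follows essentially the same route as the paper: the energy $\norm{x(t)-\bar{x}(t)}$, Lemma \ref{Lemma:FB} for the dissipation, Lemma \ref{lem:CP} for the drift of the central path, and an integrating factor exploiting $\int_0^\infty\gamma\lambda\eps(2-\lambda\eps)\,\dif t=\infty$ together with the vanishing of the ratio of the forcing term to the decay rate. The only (cosmetic) difference is that you close the Gronwall step with an explicit split of the integral at a time $T$ where $b/a<\delta$, whereas the paper invokes l'H\^{o}pital's rule; your version is, if anything, the more careful of the two.
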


\begin{proof}
Set $\theta(t)\eqdef \frac{1}{2}\norm{x(t)-\bar{x}(t)}^{2}$. We then have 
\begin{align*}
\dot{\theta}(t)&=\inner{x(t)-\bar{x}(t),\dot{x}(t)-\frac{\dif}{\dif t}\bar{x}(t)}\\
&=\inner{x(t)-\bar{x}(t),\dot{x}(t)}-\inner{x(t)-\bar{x}(t),\frac{\dif}{\dif t}\bar{x}(t)}\\
&\leq \inner{x(t)-\bar{x}(t),\dot{x}(t)}+\norm{x(t)-\bar{x}(t)}\cdot\norm{\frac{\dif}{\dif t}\bar{x}(t)}.
\end{align*}
Setting $\delta(t)\eqdef-\gamma(t)\lambda(t)\eps(t)(\frac{\lambda(t)\eps(t)}{2}-1)$ and $\Delta(t,0)\eqdef\int_{0}^{t}\delta(s)\dif s$, Lemma \ref{Lemma:FB} gives 
\[
\dot{\theta}(t)\leq -2\delta(t) \theta(t)+\norm{x(t)-\bar{x}(t)}\cdot\norm{\frac{\dif}{\dif t}\bar{x}(t)}.
\]
Following Lemma \ref{lem:CP}, we can bound the second addendum to get
\[
\dot{\theta}(t)\leq -2\delta(t)\theta(t)+\sqrt{2\theta(t)}\left(\frac{\dot{\beta}(t)}{\eps(t)}\norm{\opB(\bar{x}(t))}+\frac{\abs{\dot{\eps}(t)}}{\eps(t)}\norm{\bar{x}(t)}\right).
\]
By putting $\varphi(t)\eqdef\sqrt{2\theta(t)}$, we thus finally arrive at the inequality 
\begin{equation}
\dot{\varphi}(t)\leq -\delta(t)\varphi(t)+w(t), 
\end{equation}
where (recall that $\dot{\eps}(t)\leq 0$)
\begin{equation}
w(t)\eqdef \frac{\dot{\beta}(t)}{\eps(t)}\norm{\opB(\bar{x}(t))}-\frac{\dot{\eps}(t)}{\eps(t)}\norm{\bar{x}(t)}.
\end{equation} 
Introducing the integration factor $\exp(-\Delta(t,0))$, we thus obtain 
$$\varphi(t)\leq \varphi(0)\exp(-\Delta(t,0))+\exp(-\Delta(t,t_0))\int_{0}^{t}w(s)\exp(\Delta(s,t_0))\dif s.$$
If the integral on the right-hand side is bounded, we are done. Else, apply l'H\^{o}spital's rule, the definition of $\delta(t)$, and conditions \eqref{eq:scaleTikFB} and \eqref{eq:scalePenaltyFB}, to get 
\begin{align*}
\lim_{t\to\infty}\exp(-\Delta(t,0))\int_{0}^{t}w(s)&\exp(\Delta(s,0))\dif s=\lim_{t\to\infty}\frac{w(t)\exp(\Delta(t,0))}{\delta(t)\exp(\Delta(t,0))}\\
&=\lim_{t\to\infty}\frac{\dot{\eps}(t)}{\delta(t)\eps(t)}\left(\frac{\dot{\beta}(t)}{\dot{\eps}(t)}\norm{\opB(\bar{x}(t))}-\norm{\bar{x}(t)}\right)=0.
\end{align*}
This shows that $\lim_{t\to\infty}\norm{x(t)-\bar{x}(t)}=0$. Since 
\[
\norm{x(t)-\Pi_{\zer(\opA+\opD+\NC_{\scrC})}(0)}\leq\norm{x(t)-\bar{x}(t)}+\norm{\bar{x}(t)-\Pi_{\zer(\opA+\opD+\NC_{\scrC})}(0)}, 
\]
the strong convergence claim follows from Corollary \ref{cor:asymptotics}. 
\end{proof}

\begin{remark}
Since $\eps(t)\to 0$ and $\beta(t)\to\infty$ as $t\to\infty$, the hypothesis \eqref{eq:lambdaFB} implies that $\lim_{t\to\infty}\lambda(t)=0$, as well as $\limsup_{t\to\infty}\lambda(t)\beta(t)\leq\mu$. We also see that $\lambda(t)<\eta$ for all $t\geq 0$. Hence, the rate of the decay of the step size must be on par with the rate of divergence of the penalty parameter. 
\end{remark}

\begin{remark}
The assumptions formulated in Theorem \ref{th:MainFB} can be satisfied by the following set of functions: $\gamma(t)=\cos(1/t),\lambda(t)=\frac{\lambda}{\beta(t)+\lambda\eps(t)}$, where $\lambda<c\min\{\mu,\eta\}$ for some $c\in(0,1)$ (recommended to be close to 1), as well as $\eps(t)=(t+b)^{-r}$ and $\beta(t)=(t+b)^{s}$ for $b\geq 1$ and $0<r<s$. Then, $\delta(t)=\scrO\big(\frac{\eps^2(t)}{\beta^2(t)}\big)=\scrO\big((t+b)^{-2(r+s)}\big)$, and consequently we need to impose the restriction $s+r<\frac{1}{2}$ to ensure that $\delta\notin L^{1}(\R_{+})$. Additionally, we compute $\frac{\dot{\eps}(t)}{\gamma(t)\lambda(t)\eps^2(t)}=\scrO\big((t+b)^{r+s-1}\big)$. This yields $r+s<1$. Finally, we have $\frac{\dot{\beta}(t)}{\gamma(t)\lambda(t)\eps^2(t)}=\scrO\big((t+b)^{2(r+s)-1}\big)$, and to make this a bounded sequence, we have the restriction $r+s<\frac{1}{2}$. These conditions together span a region of feasible parameters $(r,s)$ which is nonempty.
\end{remark}
\subsection{Penalty regulated forward-backward-forward dynamics}
\label{sec:FBF}
A critical assumption underlying the forward-backward dynamical system \eqref{eq:FB} is the cocoercivity (inverse strong monotonicity) of the single-valued operators $\opD$ and $\opB$. Cocoercivity is guaranteed to hold when the monotone inclusion problem \eqref{eq:MI} models optimality conditions for constrained convex optimization problems. However, it generically fails in structured monotone splitting problems arising from primal-dual optimality conditions derived from the Fechel-Rockafellar theorem. Section \ref{sec:Applications} describes a very general class of splittings illustrating this claim. Motivated by this observation, this section derives a new dynamical system formulation exhibiting multiscale aspects, respecting Tikhonov regularization and penalization. Specifically, the class of dynamical systems we are investigating in this section builds on \cite{Bot:2020aa}, and extends it to the constrained setting.\\ 

We consider the following first-order dynamical system 
\begin{align*}
p(t)&=\res_{\lambda(t)\opA}(x(t)-\lambda(t)V_{t}(x(t)),\\
\dot{x}(t)&=p(t)-x(t)+\lambda(t)[V_{t}(x(t))-V_{t}(p(t))]
\end{align*}
To obtain a simpler expression of this dynamics, we define the reflection $R_{t}(x)\eqdef x-\lambda(t)V_{t}(x)$, and the vector field $f(t,x):[0,\infty)\times\scrH\to\scrH$ given by
\begin{equation}\label{eq:Tseng}
f(t,x)\eqdef \left(R_{t}\circ \res_{\lambda(t)\opA}\circ R_{t}\right)(x)-R_{t}(x)
\end{equation}
The first-order dynamical system \eqref{eq:FBF} is then exactly of the form \eqref{eq:ODE}. To prove existence and uniqueness of strong global solutions, we can use the same arguments as in Section 5.1 of \cite{Bot:2020aa}, based on the Cauchy-Lipschitz theorem for absolutely continuous trajectories. We therefore omit these straightforward derivations. 

\begin{remark} 
A discrete-time formulation of the FBF dynamical system can be given along the same lines as in Remark \ref{rem:discrete}. Adopting the notation used there, let $\pi_{\delta}=\{0<t_{1}<t_{2}<\ldots<t_{n^{\delta}}=T\}$ be a partition of $[0,T]$; let $h^{\delta}_{k}\eqdef t_{k+1}^{\delta}-t_{k}^{\delta}$, and let $\delta$ be the mesh size given by $\delta\eqdef\max\{h_{k}^{\delta}:0\leq k\leq n^{\delta}-1\}$. We define a pair process $(P^{\delta}_{k},X^{\delta}_{k})_{k\in\{0,1,\ldots,n^{\delta}-1\}}$ recursively by 
\begin{align*}
P^{\delta}_{k}&=\res_{\lambda^{\delta}_{k}\opA}(X^{\delta}_{k}-\lambda^{\delta}_{k}V_{k}(X^{\delta}_{k})),\\
X^{\delta}_{k+1}&=P^{\delta}_{k}+\lambda^{\delta}_{k}(V_{k}(X^{\delta}_{k})-V_{k}(P^{\delta}_{k})). 
\end{align*}
This is just a dynamic version of Tseng's modified extragradient method, augmented with penalty terms (as studied in \cite{Bot:2014aa}), together with a Tikhonov term (a new ingredient).
\end{remark}

\subsubsection{Strong convergence of the trajectories to the least-norm solution}
This section contains the necessary Lyapunov analysis for understanding the long-run behavior of trajectories issued by \eqref{eq:FBF}. We begin with some technical lemmata.

\begin{lemma}
\label{Lemma 3.7} 
	For almost all $t \in\R_{\geq 0}$, we obtain
	\begin{align*}
		0 &\leq-\norm{x(t)-p(t)}^{2}+\norm{x(t)-\bar{x}(t)}^{2}-(1+2\lambda(t)\eps(t))\norm{p(t)-\bar{x}(t)}^{2}\\
		&~~~~+2\lambda(t)\Big\langle V_{t}(p(t))-V_{t}(x(t)),p(t)-\bar{x}(t)\Big\rangle
	\end{align*}
\end{lemma}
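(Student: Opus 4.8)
The plan is to exploit the two monotone inclusions that characterize $p(t)$ and $\bar{x}(t)$, combine them through monotonicity of $\opA$, upgrade the resulting bound using the strong monotonicity of $V_t$, and finally convert the inner products into squared norms via the polarization identity \eqref{eq:3point}.

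First I would rewrite the defining relation $p(t)=\res_{\lambda(t)\opA}(x(t)-\lambda(t)V_{t}(x(t)))$ using the characterization of the resolvent of a maximally monotone operator, which is equivalent to
\[
\frac{x(t)-p(t)}{\lambda(t)}-V_{t}(x(t))\in\opA(p(t)).
\]
At the same time, the central-path point $\bar{x}(t)$ is by construction the unique zero of $\opA+V_{t}$, so that $-V_{t}(\bar{x}(t))\in\opA(\bar{x}(t))$. Applying the monotonicity of $\opA$ to these two elements of its graph (at $p(t)$ and $\bar{x}(t)$, respectively) and multiplying through by $\lambda(t)>0$ yields
\[
\inner{p(t)-\bar{x}(t),x(t)-p(t)}\geq\lambda(t)\inner{p(t)-\bar{x}(t),V_{t}(x(t))-V_{t}(\bar{x}(t))}.
\]

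The crux is the treatment of the right-hand side. I would split
\[
V_{t}(x(t))-V_{t}(\bar{x}(t))=\bigl[V_{t}(x(t))-V_{t}(p(t))\bigr]+\bigl[V_{t}(p(t))-V_{t}(\bar{x}(t))\bigr],
\]
so that the second bracket is amenable to the $\eps(t)$-strong monotonicity of $V_{t}$ established in Lemma \ref{lem:LipschitzV}, giving $\inner{p(t)-\bar{x}(t),V_{t}(p(t))-V_{t}(\bar{x}(t))}\geq\eps(t)\norm{p(t)-\bar{x}(t)}^{2}$. The first bracket contributes exactly the cross term $\inner{V_{t}(p(t))-V_{t}(x(t)),p(t)-\bar{x}(t)}$ that survives in the final inequality. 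After this split I would double the inequality and apply \eqref{eq:3point} in the form $2\inner{p(t)-\bar{x}(t),x(t)-p(t)}=\norm{x(t)-\bar{x}(t)}^{2}-\norm{x(t)-p(t)}^{2}-\norm{p(t)-\bar{x}(t)}^{2}$. Collecting the two $\norm{p(t)-\bar{x}(t)}^{2}$ contributions into the coefficient $(1+2\lambda(t)\eps(t))$ and rearranging signs then reproduces the stated inequality verbatim.

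The computation is essentially routine once the two inclusions are in place; the only step requiring care—and the one I expect to be the main obstacle—is the decomposition of $V_{t}(x(t))-V_{t}(\bar{x}(t))$ through the intermediate point $p(t)$. It is precisely this choice that isolates the strong-monotonicity gain producing the $(1+2\lambda(t)\eps(t))$ coefficient, while leaving the forward-correction cross term $\inner{V_{t}(p(t))-V_{t}(x(t)),p(t)-\bar{x}(t)}$ intact. Since that surviving cross term is what will later be controlled through the Lipschitz continuity of $V_{t}$ together with the step-size restriction on $\lambda(t)$, it is essential \emph{not} to estimate it away at this stage.
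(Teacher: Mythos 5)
Your proof is correct and follows essentially the same route as the paper: the paper applies the $\eps(t)$-strong monotonicity of $\Phi_t=\opA+V_t$ at $p(t)$ and $\bar{x}(t)$ in one step, while you unpack that into monotonicity of $\opA$ plus $\eps(t)$-strong monotonicity of $V_t$ through the intermediate point $p(t)$ — an equivalent decomposition leading to the identical inequality via \eqref{eq:3point}.
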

\begin{proof}
From the definition of the process $t\mapsto p(t)$, we deduce
	$$
	(\Id_{\scrH}+\lambda(t)\opA)p(t) \ni x(t)-\lambda(t)V_{t}(x(t)),
	$$
	it follows,
	$$
	\Phi_{t}(p(t))=\opA(p(t))+V(t,p(t)) \ni \frac{x(t)-p(t)}{\lambda(t)}-V_{t}(x(t))+V_{t}(p(t))=-\frac{\dot{x}(t)}{\lambda(t)}.
	$$
Recall that $\Phi_t\equiv \Phi_{\varepsilon(t),\beta(t)}$ is $\eps(t)$-strongly monotone, so that $\{\bar{x}(t)\}=\zer(\Phi_t)$. Consequently,
	\begin{equation}\label{eq:Phi-Monotone}
	\Big\langle-\frac{\dot{x}(t)}{\lambda(t)}-0,p(t)-\bar{x}(t)\Big\rangle\geq \varepsilon(t)\norm{p(t)-\bar{x}(t)}^2.
	\end{equation}
Using these properties, we obtain
\begin{align}\label{e3.5}
	2\lambda(t)\varepsilon(t)\norm{p(t)-\bar{x}(t)}^2&\leq 2\inner{ x(t)-p(t),p(t)-\bar{x}(t)}\\
	&+2\lambda(t)\Big\langle V_t(p(t))-V_t(x(t)),p(t)-\bar{x}(t)\Big\rangle \nonumber \\
	&=-\norm{x(t)-p(t)}^{2}+\norm{x(t)-\bar{x}(t)}^{2}-\norm{p(t)-\bar{x}(t)}^{2} \nonumber \\
	&+2\lambda(t)\Big\langle V_t(p(t))-V_t(x(t)),p(t)-\bar{x}(t)\Big\rangle, \nonumber
\end{align}
which completes the proof.
\end{proof}

\begin{lemma}\label{Lemma 3.8} 
	Let $t \mapsto x(t)$ be the strong global solution of \eqref{eq:FBF}. Then
	$$
	\inner{x(t)-\bar{x}(t),\dot{x}(t)} \leq (\lambda(t)L(t)-1)\norm{x(t)-p(t)}^2-\lambda(t)\eps(t)\norm{p(t)-\bar{x}(t)}^2
	$$
	for almost all $t\geq 0$.
\end{lemma}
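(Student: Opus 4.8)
The plan is to differentiate the squared half-distance to the central path implicitly by expanding $\inner{x(t)-\bar{x}(t),\dot{x}(t)}$ with the velocity read off from \eqref{eq:FBF}, and then to use Lemma \ref{Lemma 3.7} to absorb the only genuinely bilinear term. Throughout I suppress the time argument and write $x,p,\bar{x},\lambda,\eps,L,V$ for $x(t),p(t),\bar{x}(t),\lambda(t),\eps(t),L(t),V_t$ respectively.

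First I substitute $\dot{x}=p-x+\lambda\big(V(x)-V(p)\big)$ from \eqref{eq:FBF} and split
\[
\inner{x-\bar{x},\dot{x}}=\inner{x-\bar{x},p-x}+\lambda\inner{x-\bar{x},V(x)-V(p)}.
\]
For the first, purely geometric summand I apply the polarization identity: since $(x-\bar{x})+(p-x)=p-\bar{x}$, this yields $2\inner{x-\bar{x},p-x}=\norm{p-\bar{x}}^{2}-\norm{x-\bar{x}}^{2}-\norm{x-p}^{2}$, exactly the rearrangement of \eqref{eq:3point} already used in the proof of Lemma \ref{Lemma 3.7}.

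For the operator summand I decompose $x-\bar{x}=(x-p)+(p-\bar{x})$, so that
\[
\lambda\inner{x-\bar{x},V(x)-V(p)}=\lambda\inner{x-p,V(x)-V(p)}+\lambda\inner{p-\bar{x},V(x)-V(p)}.
\]
The first piece is at most $\lambda L\norm{x-p}^{2}$ by Cauchy-Schwarz together with the $L$-Lipschitz continuity of $V_t$ from Lemma \ref{lem:LipschitzV}. The decisive observation is that the second piece is exactly the quantity controlled by Lemma \ref{Lemma 3.7}: since $\inner{p-\bar{x},V(x)-V(p)}=-\inner{V(p)-V(x),p-\bar{x}}$, that lemma rearranges into $2\lambda\inner{p-\bar{x},V(x)-V(p)}\leq-\norm{x-p}^{2}+\norm{x-\bar{x}}^{2}-(1+2\lambda\eps)\norm{p-\bar{x}}^{2}$.

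Finally I insert the three bounds and collect coefficients: the $\norm{x-\bar{x}}^{2}$ contributions cancel, the $\norm{p-\bar{x}}^{2}$ terms combine into the coefficient $-\lambda\eps$, and the $\norm{x-p}^{2}$ terms combine into $\lambda L-1$, which is precisely the claimed estimate. The only delicate point, and the place where I expect the main bookkeeping effort, is matching the cross term against Lemma \ref{Lemma 3.7}: one must keep the orientation $V(x)-V(p)$ versus $V(p)-V(x)$ and the factor $2$ consistent. There is no genuine analytic obstacle beyond this sign and factor accounting.
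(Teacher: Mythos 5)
Your proposal is correct and follows essentially the same route as the paper: both substitute the velocity from \eqref{eq:FBF}, apply the polarization identity \eqref{eq:3point} to $\inner{x-\bar{x},p-x}$, invoke Lemma \ref{Lemma 3.7} for the $\inner{V_t(p)-V_t(x),p-\bar{x}}$ term, and bound $\lambda(t)\inner{x-p,V_t(x)-V_t(p)}$ by $\lambda(t)L(t)\norm{x-p}^2$ via Cauchy--Schwarz and Lipschitz continuity. The only difference is bookkeeping order — you split $x-\bar{x}=(x-p)+(p-\bar{x})$ up front, whereas the paper lets the two operator terms telescope after inserting Lemma \ref{Lemma 3.7} — and your sign and factor-of-two accounting checks out.
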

\begin{proof}
	For almost all $t\geq 0$, we have
	\begin{align*}
		2\langle x(t)-\bar{x}(t),\dot{x}(t) \rangle &= 2\langle x(t)-\bar{x}(t),p(t)-x(t) \rangle \\
		&+2\Big\langle x(t)-\bar{x}(t),\lambda(t)[V_{t}(x(t))-V_{t}(p(t))] \Big\rangle\\
		&=\|\bar{x}(t)-p(t)\|^2-\|\bar{x}(t)-x(t)\|^2-\|x(t)-p(t)\|^2\\
		&+2\lambda(t)\Big\langle x(t)-\bar{x}(t),V_{t}(x(t))-V_{t}(p(t)) \Big\rangle
	\end{align*}
	combining with Lemma \ref{Lemma 3.7}, we get
		\begin{align*}
			\|\bar{x}(t)-p(t)\|^2-\|\bar{x}(t)-x(t)\|^2
			&\leq-\|x(t)-p(t)\|^2-2\lambda(t)\varepsilon(t)\|p(t)-\bar{x}(t)\|^2\\
			&~~~~+2\lambda(t)\Big\langle V_{t}(p(t))-V_{t}(x(t)),p(t)-\bar{x}(t) \Big\rangle
		\end{align*}
	then,
	\begin{align*}
		2\langle x(t)-\bar{x}(t),\dot{x}(t) \rangle &\leq -2\|x(t)-p(t)\|^2-2\lambda(t)\varepsilon(t)\|p(t)-\bar{x}(t)\|^2\\
		&+2\lambda(t)\Big\langle V_{t}(p(t))-V_{t}(x(t)),p(t)-\bar{x}(t) \Big\rangle\\
		&+2\lambda(t)\Big\langle x(t)-\bar{x}(t),V_{t}(x(t))-V_{t}(p(t)) \Big\rangle\\
		&= -2\|x(t)-p(t)\|^2-2\lambda(t)\varepsilon(t)\|p(t)-\bar{x}(t)\|^2\\
		&+2\lambda(t)\Big\langle x(t)-p(t), V_{t}(x(t))-V_{t}(p(t)) \Big\rangle\\
		&\leq -2\|x(t)-p(t)\|^2-2\lambda(t)\varepsilon(t)\|p(t)-\bar{x}(t)\|^2\\
		&+2\lambda(t)\|x(t)-p(t)\|\cdot\| V_{t}(x(t))-V_{t}(p(t))\|\\
		&\leq -2\|x(t)-p(t)\|^2-2\lambda(t)\varepsilon(t)\|p(t)-\bar{x}(t)\|^2\\
		&+2\lambda(t)L(t)\|x(t)-p(t)\|^2\\
		&\leq -2(1-\lambda(t)L(t))\|x(t)-p(t)\|^2-2\lambda(t)\varepsilon(t)\|p(t)-\bar{x}(t)\|^2
  	\end{align*}
	the proof is completed.
\end{proof}


\begin{theorem}\label{th:main}
	Let $t \mapsto x(t)$ be the strong global solution of \eqref{eq:FBF}. Let Assumptions \ref{ass:standing}-\ref{ass:Thm_FB} be in place. Furthermore, we impose the following conditions:
	\begin{itemize}
		\item[(i)] $\lim\limits_{t \rightarrow \infty}\int_{0}^{t}\delta(s)\dif s=\infty$, where $\delta(t)=\frac{1-\lambda(t)L(t)}{a^{2}(t)}$, and 
\begin{equation}\label{eq:a}
a(t)\eqdef 2+\frac{1}{\lambda(t)\varepsilon(t)}+\frac{1}{\eta\varepsilon(t)}+\frac{\beta(t)}{\mu\varepsilon(t)}.
\end{equation}
		 \item[(ii)] $\lim_{t\to\infty}\frac{\dot{\eps}(t)}{\eps(t)\delta(t)}=0$ and $\lim_{t\to\infty}\frac{\dot{\beta}(t)}{\eps(t)\delta(t)}=0$.
	\end{itemize}
Then $x(t)\rightarrow \Pi_{\opA+\opD+\NC_{\scrC}}(0)$ as $t \rightarrow +\infty$.
\end{theorem}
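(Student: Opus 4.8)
The plan is to replicate the Lyapunov scheme used for the forward–backward case in Theorem \ref{th:MainFB}, with the energy $\theta(t)\eqdef\tfrac12\norm{x(t)-\bar{x}(t)}^{2}$ tracking the gap between the trajectory and the central path. Differentiating gives $\dot\theta(t)=\inner{x(t)-\bar x(t),\dot x(t)}-\inner{x(t)-\bar x(t),\tfrac{\dif}{\dif t}\bar x(t)}$, and the second inner product is handled through Lemma \ref{lem:CP} exactly as before. The whole argument therefore reduces to upgrading the descent estimate of Lemma \ref{Lemma 3.8}, namely $\inner{x-\bar x,\dot x}\leq-(1-\lambda L)\norm{x-p}^{2}-\lambda\eps\norm{p-\bar x}^{2}$, into a genuinely coercive bound of the form $\inner{x-\bar x,\dot x}\leq-\delta(t)\norm{x-\bar x}^{2}$.

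The key step, and the one I expect to be the crux, is the inequality $\norm{x(t)-\bar x(t)}\leq a(t)\norm{x(t)-p(t)}$, which is precisely what the weight $a(t)$ in the statement is designed to encode. To obtain it I would start from the strong-monotonicity relation \eqref{eq:Phi-Monotone}, i.e. $\inner{-\dot x/\lambda,p-\bar x}\geq\eps\norm{p-\bar x}^{2}$; after Cauchy--Schwarz this yields $\norm{p-\bar x}\leq\norm{\dot x}/(\lambda\eps)$. Next, from the defining relation $\dot x=p-x+\lambda[V_{t}(x)-V_{t}(p)]$ and the $L(t)$-Lipschitz continuity of $V_{t}$ (Lemma \ref{lem:LipschitzV}), I get $\norm{\dot x}\leq(1+\lambda L)\norm{x-p}$, hence $\norm{p-\bar x}\leq\tfrac{1+\lambda L}{\lambda\eps}\norm{x-p}$. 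The triangle inequality then gives $\norm{x-\bar x}\leq\big(1+\tfrac{1+\lambda L}{\lambda\eps}\big)\norm{x-p}$, and a short computation using $L=\tfrac1\eta+\eps+\tfrac\beta\mu$ shows $1+\tfrac{1+\lambda L}{\lambda\eps}=a(t)$, as claimed.

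With this in hand, since Assumption \ref{ass:Thm_FB} forces $1-\lambda(t)L(t)>0$, I may discard the nonpositive term $-\lambda\eps\norm{p-\bar x}^{2}$ in Lemma \ref{Lemma 3.8} and substitute $\norm{x-p}^{2}\geq a(t)^{-2}\norm{x-\bar x}^{2}$ to arrive at $\inner{x-\bar x,\dot x}\leq-\delta(t)\norm{x-\bar x}^{2}$ with $\delta(t)=(1-\lambda L)/a^{2}$. Setting $\varphi(t)\eqdef\norm{x(t)-\bar x(t)}$ and invoking Lemma \ref{lem:CP}, the energy inequality becomes $\dot\varphi(t)\leq-\delta(t)\varphi(t)+w(t)$ with $w(t)=\tfrac{\dot\beta}{\eps}\norm{\opB(\bar x)}-\tfrac{\dot\eps}{\eps}\norm{\bar x}$, identical in form to the one obtained in Theorem \ref{th:MainFB}.

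The remainder is then routine and mirrors the FB proof. Using the integrating factor $\exp(-\int_{0}^{t}\delta)$ together with condition (i) (divergence of $\int\delta$), the homogeneous term vanishes; for the forced term I would either note it is bounded or apply l'H\^{o}pital, reducing the limit to $\lim_{t\to\infty}w(t)/\delta(t)$. By Corollary \ref{cor:asymptotics} the central path converges strongly, so $\norm{\bar x(t)}$ is bounded and, since $\opB$ is continuous and vanishes on $\scrC$ while the limit $\Pi_{\zer(\Phi)}(0)\in\scrC$, the quantity $\norm{\opB(\bar x(t))}$ is bounded (in fact tends to $0$); combined with condition (ii), namely $\dot\eps/(\eps\delta)\to0$ and $\dot\beta/(\eps\delta)\to0$, this forces $w(t)/\delta(t)\to0$. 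Hence $\varphi(t)\to0$, and the triangle inequality $\norm{x(t)-\Pi_{\zer(\Phi)}(0)}\leq\norm{x(t)-\bar x(t)}+\norm{\bar x(t)-\Pi_{\zer(\Phi)}(0)}$ together with Corollary \ref{cor:asymptotics} yields the claimed strong convergence.
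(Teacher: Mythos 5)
Your proposal is correct and follows essentially the same route as the paper: the same Lyapunov function $\theta(t)=\tfrac12\norm{x(t)-\bar x(t)}^2$, the same key coercivity bound $\norm{x(t)-\bar x(t)}\leq a(t)\norm{x(t)-p(t)}$ derived from the strong monotonicity relation \eqref{eq:Phi-Monotone} together with the Lipschitz continuity of $V_t$, and the same integrating-factor/l'H\^{o}pital argument to conclude $\varphi(t)\to 0$ before invoking Corollary \ref{cor:asymptotics}. The only cosmetic difference is that you route the estimate of $\norm{p(t)-\bar x(t)}$ through $\norm{\dot x(t)}\leq(1+\lambda(t)L(t))\norm{x(t)-p(t)}$ rather than bounding the inner product directly, which yields the identical constant $a(t)$.
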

\begin{proof}
	Define $\theta(t)=\frac{1}{2}\|x(t)-\bar{x}(t)\|^2$ where $t \geq 0$. From $\bar{x}(t)=\bar{x}(\varepsilon(t),\beta(t))$, we have
	$$
	\dot{\theta}(t)=\left\langle x(t)-\bar{x}(t),\dot{x}(t)-\frac{\dd}{\dd t}\bar{x}(t)\right\rangle,
 	$$
	where
	$$
	\frac{\dd}{\dd t}\bar{x}(t)=\frac{\partial}{\partial \varepsilon}\bar{x}(\varepsilon(t),\beta(t))\dot{\varepsilon}(t)+\frac{\partial}{\partial \beta}\bar{x}(\varepsilon(t),\beta(t))\dot{\beta}(t)
	$$
combining \eqref{eq:boundCP} with Lemma \ref{Lemma 3.8}, we get
	\begin{equation}\label {e3.6}
	\begin{aligned}
		\dot{\theta}& =\Big\langle x(t)-\bar{x}(t),\dot{x}(t)-\frac{\partial}{\partial \varepsilon}\bar{x}(\varepsilon(t),\beta(t))\dot{\varepsilon}(t)-\frac{\partial}{\partial \beta}\bar{x}(\varepsilon(t),\beta(t))\dot{\beta}(t)\Big\rangle\\
		&=\langle x(t)-\bar{x}(t),\dot{x}(t)\rangle-\dot{\varepsilon}(t)\Big\langle x(t)-\bar{x}(t),\frac{\partial}{\partial \varepsilon}\bar{x}(\varepsilon(t),\beta(t))\Big\rangle\\
		&\quad-\dot{\beta}(t)\Big\langle x(t)-\bar{x}(t),\frac{\partial}{\partial \beta}\bar{x}(\varepsilon(t),\beta(t))\Big\rangle\\
		&\leq -(1-\lambda(t)L(t))\|x(t)-p(t)\|^2-\varepsilon(t)\lambda(t)\|p(t)-\bar{x}(t)\|^2\\
		&\quad-\dot{\varepsilon}(t)\Big\langle x(t)-\bar{x}(t),\frac{\partial}{\partial \varepsilon}\bar{x}(\varepsilon(t),\beta(t))\Big\rangle-\dot{\beta}(t)\Big\langle x(t)-\bar{x}(t),\frac{\partial}{\partial \beta}\bar{x}(\varepsilon(t),\beta(t))\Big\rangle
	\end{aligned}
\end{equation}
%
%
Since $\Phi_{t}=\opA+V_{t}$ is $\varepsilon(t)$-strongly monotone, \eqref{eq:Phi-Monotone} shows that 
\[
	\lambda(t)\varepsilon(t)\|p(t)-\bar{x}(t)\|^2 \leq \langle x(t)-p(t)+\lambda(t)[V_{t}(p(t))-V_{t}(x(t))],p(t)-\bar{x}(t) \rangle,
	\]
Using Cauchy-Schwarz, and the $L(t)$-Lipschitz continuity of $V_{t}$, we obtain 
\[
\norm{p(t)-\bar{x}(t)}\leq \left(\frac{1}{\lambda(t)\varepsilon(t)}+1+\frac{1}{\eta\varepsilon(t)}+\frac{\beta(t)}{\mu\varepsilon(t)}\right)\|x(t)-p(t)\|.
 \]
It follows that
	\begin{align*}
\|x(t)-\bar{x}(t)\|&\leq \|x(t)-p(t)\|+\|p(t)-\bar{x}(t)\|\\
&\leq\big(2+\frac{1}{\lambda(t)\varepsilon(t)}+\frac{1}{\eta\varepsilon(t)}+\frac{\beta(t)}{\mu\varepsilon(t)}\big)\|x(t)-p(t)\|=a(t)\|x(t)-p(t)\|.
\end{align*}
For almost all $t \geq 0$, we thus get 
\begin{equation}\label {e3.9}
	-\|x(t)-p(t)\|^2 \leq -\frac{1}{a^2(t)}\|x(t)-\bar{x}(t)\|^2.
\end{equation}
Defining $\varphi \eqdef\sqrt{2\theta}$, we obtain
	\begin{align*}
	\dot{\theta}(t)&=\dot{\varphi}(t)\varphi(t)\leq -\frac{1-\lambda(t)L(t)}{a^2(t)}\|x(t)-\bar{x}(t)\|^2\\
	&~~~-\dot{\varepsilon}(t)\| x(t)-\bar{x}(t)\|\cdot \|\frac{\partial}{\partial \varepsilon}\bar{x}(\varepsilon(t),\beta(t))\|+\dot{\beta}(t)\| x(t)-\bar{x}(t)\|\cdot\|\frac{\partial}{\partial \beta}\bar{x}(\varepsilon(t),\beta(t))\|\\
	&\leq -\frac{1-\lambda(t)L(t)}{a^2(t)}\varphi(t)^{2}-\dot{\varepsilon}(t)\varphi(t)\frac{\|\bar{x}(\eps(t),\beta(t))\|}{\eps(t)}+\frac{\dot{\beta}(t)}{\varepsilon(t)}\varphi(t)\|\opB(\bar{x}(\varepsilon(t),\beta(t)))\|.
\end{align*}
We define $\delta(t)\eqdef\frac{1-\lambda(t)L(t)}{a^2(t)}$, and the integrating factor $\Delta(t)\eqdef\int_{0}^{t}\delta(s)\dd s$. Upon using the simplified notation $\bar{x}(t)\equiv\bar{x}(\eps(t),\beta(t))$, we then continue from the previous display with 
$$
\frac{\dd}{\dd t}\big(\varphi(t)\exp(\Delta(t))\big)\leq -\frac{\dot{\varepsilon}(t)}{\varepsilon(t)}\exp(\Delta(t))\big(\|\bar{x}(t)\|-\frac{\dot{\beta}(t)}{\dot{\varepsilon}(t)}\|B\bar{x}(t)\|\big)
$$
We set $w(t)\eqdef\norm{\bar{x}(t)}-\frac{\dot{\beta}(t)}{\dot{\varepsilon}(t)}\norm{\opB(\bar{x}(t))}$, and subsequently integrate both sides in the previous display from 0 to $t$, to conclude with
\begin{equation}\label{e4.12}
	0 \leq \varphi(t) \leq \exp(-\Delta(t))\bigg[\varphi(0)-\int_0^t\bigg(\exp(\Delta(s))\frac{\dot{\eps}(s)}{\eps(s)}w(s)\bigg)\dd s\bigg]
\end{equation}
If $t\mapsto \int_{0}^{t}\exp(\Delta(s))\frac{\dot{\varepsilon}(s)}{\varepsilon(s)}w(s)\dd s$ happens to be bounded, then we immediately obtain from hypothesis (i) that $\varphi(t)\to 0$. Otherwise, we apply l'H\^{o}pital's rule to get 
\begin{align*}
\lim_{t\to\infty} \exp(-\Delta(t))\int_{0}^{t}\exp(\Delta(s))\frac{\dot{\varepsilon}(s)}{\varepsilon(s)}w(s)\dd s=\lim_{t\to\infty}\frac{\exp(\Delta(t))\frac{\dot{\eps}(t)}{\eps(t)}w(t)}{\delta(t)\exp(\Delta(t))}=\lim_{t\to\infty}\frac{\frac{\dot{\eps}(t)}{\eps(t)}w(t)}{\delta(t)}
\end{align*}
Additionally, we know from the proof of Proposition \ref{prop:asymptotics} that both $t\mapsto\norm{\opB(\bar{x}(t))}$ and $t\mapsto\norm{\bar{x}(t)}$ are both bounded. Furtermore, since $\dot{\eps}(t)\leq 0$ by Assumption \ref{ass:params}, we observe that $w(t)\geq 0$. Using conditions (a) and (b), we deduce that $\varphi(t)\to 0$ and therefore $\norm{x(t)-\bar{x}(t)}\to 0$. By the triangle inequality $\norm{x(t)-\Pi_{\zer(\Phi)}(0)} \leq \norm{x(t)-\bar{x}(t)}+\norm{\bar{x}(t)-\Pi_{\zer(\Phi)}(0)}$. Using Corollary \ref{cor:asymptotics}, we conclude $x(t)\rightarrow \Pi_{\zer(\Phi)}(0)$ as $t \rightarrow +\infty$.
\end{proof}
\begin{remark}
We give some concrete specifications for functions $\eps(t),\lambda(t)$ and $\beta(t)$ satisfying all conditions for Theorem \ref{th:main} to hold. Writing Assumption \ref{ass:Thm_FB} in dynamical terms, we obtain the condition $\lambda(t)L(t)<1$. We claim that
$$
\liminf_{t\to\infty}(1-\lambda(t)L(t))=1-\limsup_{t\to\infty}\lambda(t)L(t)>0.
$$
Indeed, using the definition of the Lipschitz constant $L(t)$ in \eqref{eq:Lt}, we obtain 
$$
\lambda(t)L(t)=(1/\eta+\eps(t))\lambda(t)+\lambda(t)\beta(t)/\mu, 
$$
so that 
$\limsup_{t\to\infty}\lambda(t)L(t)<1.$ Additionally, 
\begin{align*}
a(t)&=2+\frac{1}{\eps(t)}\left(\frac{1}{\lambda(t)}+\frac{1}{\eta}+\frac{\beta(t)}{\mu}\right)=\frac{\lambda(t)(\eps(t)+L(t))+1}{\lambda(t)\eps(t)}\\
&=\frac{L(t)}{\eps(t)}(1+\frac{\eps(t)}{L(t)}+\frac{1}{L(t)\lambda(t)})=\scrO(\beta(t)/\eps(t))
\end{align*}
using that $L(t)=\scrO(\beta(t))$. This in turn implies $\delta(t)=\frac{1-\lambda(t)L(t)}{a^{2}(t)}=\scrO(\frac{\eps^{2}(t)}{\beta^{2}(t)})$. Hence, $\lim_{t\to\infty}\delta(t)=0$, and for obtaining $\delta\notin L^{1}(\R_{\geq 0})                 $ it suffices to guarantee that $\int_{0}^{\infty}\frac{\eps^{2}(t)}{\beta^{2}(t)}\dd t=\infty$. Then, 
$$
\frac{\dot{\eps}(t)}{\eps(t)\delta(t)}=\frac{\dot{\eps}(t)L^{2}(t)}{\eps^{3}(t)}\frac{(1+\frac{\eps(t)}{L(t)}+\frac{1}{L(t)\lambda(t)})^{2}}{1-\lambda(t)L(t)}\\
=\frac{\dot{\eps}(t)\beta^{2}(t)}{\eps^{3}(t)}\scrO(1).$$
It therefore suffices to have $\lim\limits_{t\to\infty}\frac{\dot{\eps}(t)\beta^{2}(t)}{\eps^{3}(t)}=0$. By a similar argument, it is easy to see that $\frac{\dot{\beta}(t)}{\eps(t)\delta(t)}=\frac{\dot{\beta}(t)\beta(t)^{2}}{\eps(t)^{3}}\scrO(1)$. Therefore, it suffices to ensure that $\frac{\dot{\beta}(t)\beta(t)^{2}}{\eps(t)^{3}}$ is bounded. Finding such functions is not too difficult. 

Assume $\eps(t)=(t+b)^{-r}, \beta(t)=(t+b)^s$, where $s,b> 0$ and $r$ is chosen such that $r+s>0$ and $r<s$. Then $\frac{\eps^{2}(t)}{\beta^{2}(t)}=(t+b)^{-2(r+s)}$, and consequently we need to impose the restriction $s+r<\frac{1}{2}$ to ensure that $\delta\notin L^{1}(\R_{+})$. Additionally, we compute $\frac{\dot{\eps}(t)\beta(t)^{2}}{\eps(t)^{3}}=-r(t+b)^{2(r+s)-1}$. This yields the same restriction $r+s<\frac{1}{2}$. Finally, $\frac{\dot{\beta}(t)\beta(t)^{2}}{\eps^{3}(t)}=s(t+b)^{3(r+s)-1}$, and to make this a bounded sequence, we need to impose the condition $s+r<\frac{1}{3}$. These conditions together span a region of feasible parameters $(r,s)$ which is nonempty. 
\close
\end{remark}

\section{Multiscale forward-backward dynamics}
\label{sec:Multiscale}
In this section we extend the forward-backward penalty dynamics to the challenging case in which the feasible set $\scrC$ can be represented as the set of joint minimizers of two penalty terms. This structural assumption can be motivated by decomposition methods in optimization and optimal control, in which different structural properties of the feasible set can be incorporated via different penalty functions. To make this concrete, we refine the general problem formulation \eqref{eq:MI} by assuming that the constrained domain $\scrC$ is of the form 
$$
\scrC=\scrC_{1}\cap \scrC_{2}, 
$$ 
with $\scrC_{1},\scrC_{2}$ closed convex and nonempty sets. This geometry could represent conic domains intersected with an affine subspace, leading to a very generic class of variational problems. To be consistent with our penalty formulation, we impose the following structural assumption on the geometry.
\begin{assumption}\label{ass:Multi}
The constrained domain $\scrC\subset\scrH$ admits the representation
\begin{equation}\label{eq:C-Multi}
\scrC=\argmin\Psi_{1}\cap\argmin\Psi_{2},
\end{equation}
with convex potentials $\Psi_{1},\Psi_{2}:\scrH\to(-\infty,\infty)$ satisfying 
\begin{enumerate}
\item $\Psi_{1}:\scrH\to\R$ is convex and $L_{\Psi_{1}}$-smooth; 
\item $\Psi_{2}:\scrH\to \R\cup\{+\infty\}$ is proper, lower semicontinuous and convex with subdifferential $\partial\Psi_{2}$;
\item $\Psi_{1}+\Psi_{2}:\scrH\to\R\cup\{+\infty\}$ is coercive;
\item The operator $\opA+\partial\Psi_{2}:\scrH\to 2^{\scrH}$ is maximally monotone.
\end{enumerate}
\end{assumption}

To align the notation with the previously studied penalty dynamics, we set $\opB_{1}\eqdef \nabla \Psi_{1}$ and $\opB_{2}\eqdef\partial\Psi_{2}$. Note that $\opB_{1}$ is $\frac{1}{L_{\Psi_{1}}}$-co-coercive and monotone, and $\opB_{2}$ is maximally monotone. Since our penalization framework only uses information on the gradient and subgradients of the penalty potentials, we can assume without loss of generality that $\argmin \Psi_{i}=\Psi^{-1}_{i}(0)$ for $i\in\{1,2\}$. If this is not originally the case, we can always re-shift the graph of the function so that the problem formulation remains the same. To solve the constrained variational inequality \eqref{eq:MI} within this more structured setup, we propose a forward-backward based dynamical system via a full splitting of the resulting problem. Given positive functions $\lambda(t),\beta(t),\eps(t)$, we assume that the time-varying operator 
\begin{equation}
\opA_{t}(x)\eqdef \opA(x)+\beta(t)\opB_{2}(x)
\end{equation}
has an easy-to-compute resolvent mapping $\res_{\lambda(t)\opA_{t}}=(\Id_{\scrH}+\lambda(t)\opA_{t})^{-1}$. This evaluation condition is the main reason why we assume that $\opA+\partial\Psi_{2}$ is maximally monotone. We further assume that the resolvent is everywhere single-valued and nonexpansive. Proceeding then in the spirit of the dynamical system \eqref{eq:FB}, we design a vector field which exploits a full splitting of the operators involved by moving all single-valued operators into the backward step and all set-valued information into the forward step. Hence, we arrive at the following first-order dynamical system 
\begin{equation}\tag{SFBP}\label{eq:SFBP-dynamics}
\dot{x}(t)+x(t)=\res_{\lambda(t)\opA_{t}}(x(t)-\lambda(t)V_{t}(x(t)))
\end{equation}
where 
\begin{equation}
V_{t}(x)\eqdef\opD(x)+\eps(t)x+\beta(t)\opB_{1}(x). 
\end{equation}

With the introduction of the time-varying operators $\opA_{t}$ and $V_{t}$, we achieve a full splitting of the penalized auxiliary problems of the form \eqref{eq:MIauxiliary}. This is done on purpose to reduce computational costs in the implementation of the dynamics. To the best of our knowledge, the first full splitting dynamics of this kind has been studied in \cite{czarnecki2016splitting} in the potential case and without Tikhonov regularization. We extend their analysis to the monotone operator case and add Tikhonov regularization on top of the operators to induce strong convergence. Moreover, the dynamical system \eqref{eq:SFBP-dynamics} contains the penalty-regulated forward-backward dynamical system \eqref{eq:FB} by setting $\opB_{2}=0$.  

We shall prove the convergence of the trajectories associated with \eqref{eq:SFBP-dynamics} under the Attouch-Czarnecki condition \cite{Attouch:2010aa,AttCzar18}:
\begin{assumption}[Attouch-Czarnecki condition]
\begin{equation}\label{eq:AttCza}
(\forall \xi\in\range(\NC_{\scrC})):\; \int_{0}^{\infty}\lambda(t)\beta(t)\left[(\Psi_{1}+\Psi_{2})^{\ast}(\frac{\xi}{\beta(t)})-\sigma_{\scrC}(\frac{\xi}{\beta(t)})\right]\dif t<\infty.
\end{equation}
\end{assumption}
This condition involves the Fenchel conjugate of the convex function $\Psi_{1}+\Psi_{2}$, defined as  
$$
(\Psi_{1}+\Psi_{2})^{\ast}(y)\eqdef \sup_{x\in\scrH}\{\inner{y,x}-(\Psi_{1}(x)+\Psi_{2}(x))\}
$$
The integrability criterion \eqref{eq:AttCza} has been exploited heavily in the analysis of penalty regulated dynamical systems. Early references include \cite{AttCzarPey11,Bot:2014aa,Bot:2016aa,Peypouquet:2012aa}, among many others.  It is implied by an Hölderian growth condition of the combined penalty function $\Psi=\Psi_{1}+\Psi_{2}$. A proper convex and lower semi-continuous function $f:\R^{d}\to\R\cup\{+\infty\}$ with $\argmin(f)\neq\emptyset$ is said to satisfy a Hölderian growth condition with exponent $\rho\in(1,2]$ if for some $\tau>0$ we have 
$$
\frac{\tau}{\rho}\dist(x,\argmin f)^{\rho}\leq f(x)-\min f\qquad \forall x\in\R^{d}.
$$

Assuming that $\Psi$ satisfies a Hölderian growth condition around $\argmin \Psi=\setC$, we have 
$$
0\leq \Psi^{\ast}(z)-\sigma_{\setC}(z)\leq\tau^{1-\rho^{*}}\frac{1}{\rho^{*}}\norm{z}^{\rho^{*}}\qquad \forall z\in\R^{d},
$$
where $\rho^{*}$ ist he dual conjugate exponent given by $\frac{1}{\rho}+\frac{1}{\rho^{*}}=1$. Therefore, the Hölderian growth condition implies \eqref{eq:AttCza} whenever $\beta(t)$ is a function satisfying the integrability condition 
$$
\int_{0}^{\infty}\tau^{1-\rho^{*}}\norm{p/\beta(r)}^{\rho^{*}}\dif t=\norm{p}^{\rho^{*}}\tau^{1-\rho^{*}}\int_{0}^{\infty}\lambda(t)\beta(t)^{1-\rho^{*}}\dif t<\infty.
$$
In the important case where $\Psi(z)=\frac{1}{2}\dist_{\scrC}(z)^{2}$, then $\Psi^{\ast}(z)-\sigma_{\scrC}(z)=\frac{1}{2}\norm{z}^{2}$, and thus the integrability condition \eqref{eq:AttCza} simplifies to $\int_{0}^{\infty}\frac{\lambda(t)}{\beta(t)}\dif t<\infty$.

\subsection*{General weak convergence analysis}

The asymptotic behavior of the multi-penalty system turned out to be more complex. However, we can establish the weak ergodic convergence of the trajectories, following the arguments in \cite{Attouch:2010aa,AttCzarPey11}. The proof of the following result can be found in Appendix \ref{sec:Appendix_Multi}.

\begin{theorem}\label{th:MS1}
Let $\eps,\lambda:[0,\infty)\to(0,\infty)$ be absolutely continuous functions in $L^{2}(0,\infty)\setminus L^{1}(0,\infty)$, and such that $\lim_{t\to\infty}\eps(t)=\lim_{t\to\infty}\lambda(t)=0$ and $\lim_{t\to\infty}\frac{\lambda(t)}{\eps(t)}=\infty$. Suppose, moreover, that $\liminf_{t\to\infty}\lambda(t)\beta(t)>0$ and, for every $\xi\in\range(\NC_{\scrC})$, we have
$$
\int_{0}^{\infty}\lambda(t)\beta(t)\left[(\Psi_{1}+\Psi_{2})^{\ast}(\frac{\xi}{\beta(t)})-\sigma_{\scrC}(\frac{\xi}{\beta(t)})\right]\dif t<\infty.
$$    
Then, 
$$
\lim_{t\to\infty}\norm{\opB_{1}(x(t))}=\lim_{t\to\infty}(\Psi_{1}+\Psi_{2})(x(t)+\dot{x}(t))=0.
$$
Define the ergodic trajectory
$$
\bar{x}(T)\eqdef \frac{\int_{0}^{T}\lambda(s)x(s)\dif s}{\int_{0}^{T}\lambda(s)\dif s} \quad (T>0).
$$
Then, $\bar x(t)$ converges weakly, as $t\to\infty$, to a point in $\zer(\opA+\opD+\NC_{\scrC})$.
\end{theorem}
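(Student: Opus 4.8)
The plan is to run an Attouch--Czarnecki-type Lyapunov analysis anchored at an arbitrary solution, absorbing the penalty contributions through the Fenchel--Young inequality and controlling them with the integrability hypothesis \eqref{eq:AttCza}. First I would reformulate the dynamics via $p(t)\eqdef x(t)+\dot{x}(t)$, so that \eqref{eq:SFBP-dynamics} reads $p(t)=\res_{\lambda(t)\opA_{t}}(x(t)-\lambda(t)V_{t}(x(t)))$. By definition of the resolvent of $\opA_{t}=\opA+\beta(t)\opB_{2}$ there exist selections $a(t)\in\opA(p(t))$ and $b_{2}(t)\in\opB_{2}(p(t))=\partial\Psi_{2}(p(t))$ with
\[
-\tfrac{1}{\lambda(t)}\dot{x}(t)-\opD(x(t))-\eps(t)x(t)-\beta(t)\opB_{1}(x(t))=a(t)+\beta(t)b_{2}(t).
\]
Fixing $z\in\zer(\Phi)$ and $\xi\in\NC_{\scrC}(z)$ with $-\opD(z)-\xi\in\opA(z)$, and setting $h_{z}(t)\eqdef\tfrac12\norm{x(t)-z}^{2}$, I would test the monotonicity of $\opA$ on the pairs $(p(t),a(t))$ and $(z,-\opD(z)-\xi)$. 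Using $\langle p(t)-z,\dot{x}(t)\rangle=\dot{h}_{z}(t)+\norm{\dot{x}(t)}^{2}$ this yields a differential inequality whose right-hand side collects a $\opD$-term $-\lambda\langle p-z,\opD(x)-\opD(z)\rangle$, a Tikhonov term $-\lambda\eps\langle p-z,x\rangle$, two penalty terms, and the normal-cone term $\lambda\langle p-z,\xi\rangle$.

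The core of the argument is the treatment of these terms. The monotone part $-\lambda\langle x-z,\opD(x)-\opD(z)\rangle\le0$ is discarded, while the residue $-\lambda\langle\dot{x},\opD(x)-\opD(z)\rangle$ and the Tikhonov term are bounded by Young's inequality; here the $L^{2}$-membership of $\lambda,\eps$ together with $\lambda/\eps\to\infty$ ensures the resulting coefficients are integrable once the trajectory is known to be bounded. For the penalties I would use the subgradient inequality $\langle b_{2}(t),p(t)-z\rangle\ge\Psi_{2}(p(t))$ (recall $\Psi_{2}(z)=\min\Psi_{2}=0$), and for $\opB_{1}=\nabla\Psi_{1}$ the convexity bound $\langle\nabla\Psi_{1}(x),x-z\rangle\ge\Psi_{1}(x)$ combined with $\langle\nabla\Psi_{1}(x),\dot{x}\rangle=\frac{\dif}{\dif t}\Psi_{1}(x)$. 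The decisive step is to dominate the $\xi$-term by Fenchel--Young applied to $\Psi=\Psi_{1}+\Psi_{2}$, using $\langle z,\xi\rangle=\sigma_{\scrC}(\xi)$:
\[
\langle p-z,\xi\rangle\le\beta(\Psi_{1}+\Psi_{2})(p)+\beta\Big[(\Psi_{1}+\Psi_{2})^{\ast}(\tfrac{\xi}{\beta})-\sigma_{\scrC}(\tfrac{\xi}{\beta})\Big].
\]
After combining, the $\Psi_{2}(p)$ contributions cancel and the $\Psi_{1}$ contributions telescope against $\frac{\dif}{\dif t}\Psi_{1}(x)$ (either through the descent lemma $\Psi_{1}(p)\le\Psi_{1}(x)+\langle\nabla\Psi_{1}(x),\dot{x}\rangle+\tfrac{L_{\Psi_{1}}}{2}\norm{\dot{x}}^{2}$ or an integration by parts in $\lambda\beta$), leaving an inequality of the schematic form $\dot{h}_{z}(t)+c\norm{\dot{x}(t)}^{2}+\lambda(t)\beta(t)\Psi_{1}(x(t))+\lambda(t)\beta(t)\Psi_{2}(p(t))\le g(t)$ with $g\in L^{1}(0,\infty)$, the summability of $g$ being precisely where \eqref{eq:AttCza} for the chosen $\xi$ and the $L^{2}\setminus L^{1}$ scalings enter.

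From here I would invoke Lemma~\ref{lem:Abbas} to conclude that $\lim_{t}h_{z}(t)$ exists for every $z\in\zer(\Phi)$ (a Fej\'er property), whence the trajectory is bounded, and that the nonnegative integrands are summable: $\int\norm{\dot{x}}^{2}<\infty$, $\int\lambda\beta\,\Psi_{1}(x)<\infty$, and $\int\lambda\beta\,\Psi_{2}(p)<\infty$. Since $\liminf\lambda\beta>0$, this gives $\int\Psi_{1}(x)<\infty$ and $\int\Psi_{2}(p)<\infty$; upgrading these to the genuine limits $\Psi_{1}(x(t))\to0$ and $(\Psi_{1}+\Psi_{2})(p(t))\to0$ requires an additional slow-variation argument on the penalty values themselves (a second application of the Lemma~\ref{lem:Abbas} bookkeeping). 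The gradient statement then follows from the smooth-convex bound $\norm{\nabla\Psi_{1}(w)}^{2}\le2L_{\Psi_{1}}\Psi_{1}(w)$ evaluated at $w=x(t)$, so that $\norm{\opB_{1}(x(t))}\to0$.

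For the ergodic convergence, fix an arbitrary $(u,w)\in\gr(\opA+\opD+\NC_{\scrC})$; monotonicity of $\opA$ at $(p(t),a(t))$ against $u$, together with the monotonicity of $\opD$ and the penalty estimates above, yields $\langle w,x(t)-u\rangle\le r(t)$ with $\int_{0}^{T}\lambda\,r$ controlled. Dividing by $\int_{0}^{T}\lambda\to\infty$ (as $\lambda\notin L^{1}$) and passing to a weak cluster point $\tilde{x}$ of $\bar{x}(T)$ gives $\langle w,\tilde{x}-u\rangle\le0$ for all $(u,w)\in\gr(\Phi)$, so $\tilde{x}\in\zer(\Phi)$ by Fact~\ref{fact:angle}. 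Uniqueness of the cluster point is then forced by the Fej\'er property: since $\lim_{t}\norm{x(t)-z}^{2}$ exists for each $z\in\zer(\Phi)$, the polarization identity and Jensen's inequality show that $\lim_{T}\langle\bar{x}(T),z_{1}-z_{2}\rangle$ exists and is common to all cluster points for any two $z_{1},z_{2}\in\zer(\Phi)$, which forces $z_{1}=z_{2}$; hence $\bar{x}(T)\wlim\tilde{x}\in\zer(\Phi)$. I expect the main obstacle to be the bookkeeping of the second paragraph, namely arranging the exact cancellation/telescoping of the penalty terms so that the coefficient of $\norm{\dot{x}}^{2}$ stays bounded and so that \eqref{eq:AttCza} together with the $L^{2}$ scalings makes $g$ integrable, followed by the nontrivial upgrade from integrability to the pointwise limits $\Psi_{1}(x(t))\to0$ and $(\Psi_{1}+\Psi_{2})(p(t))\to0$.
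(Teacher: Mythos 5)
Your proposal follows essentially the same route as the paper's Appendix~\ref{sec:Appendix_Multi} proof: a Lyapunov analysis of $\norm{x(t)-u}^{2}$ anchored at points of $\gr(\opA+\opD+\NC_{\scrC})$, the subgradient inequality for $\Psi_{2}$, the descent lemma/cocoercivity for $\Psi_{1}$, absorption of the normal-cone term via Fenchel--Young against a rescaled $\xi$ under the Attouch--Czarnecki condition, Lemma~\ref{lem:Abbas} for the Fej\'er property and the integral estimates, and a Passty-type ergodic Opial argument for weak convergence. The only differences are bookkeeping choices (the paper keeps a general $(u,w)$ and specializes to $w=0$, and introduces explicit convex-combination parameters $c_{0},c_{1},c_{2},c_{3}$ to make the coefficient of $\norm{\dot x}^{2}$ positive), and you correctly flag the upgrade from integrability to the pointwise limits, a step the paper passes over tersely.
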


\begin{remark}
Assume that the function $\Psi\eqdef\Psi_{1}+\Psi_{2}$ are boundedly inf-compact, which means that every set of the form
\[\{x\in\scrH\vert\; \norm{x}\leq R\wedge(\Psi_{1}+\Psi_{2})(x)\leq M\},\]
with $R\geq 0$ and $M\in\R$, is relatively compact. Under the assumptions of Theorem \ref{th:MS1}, since $\lim_{t\to\infty}(\Psi_{1}+\Psi_{2})(x(t))=0$, the convergence of $\{x(t)\}$ to $\zer(\opA+\opD+\NC_{\scrC})$ must be strong.
\close
\end{remark}
\section{Applications}
\label{sec:Applications}
In this section we describe some prototypical applications of our splitting framework.
\subsection{Sparse optimal control of linear systems}
Given $y_{0}\in\R^{n}$ and matrix-valued functions $A:[0,T]\to\R^{n\times n},B:[0,T]\to\R^{n\times m}$ as well as a vector-valued function $c:[0,T]\to\R^{n}$, consider the control system 
\begin{equation}\label{eq:CS}
\dot{y}(t)=A y(t)+B(t)u(t)+c(t)\quad y(0)=y_{0}.
\end{equation}
The process $u\in L^{\infty}(0,T;\R^{m})$ is an open-loop control. We assume that $A,B$ and $c$ are bounded and sufficiently regular, so that the $u\in L^{\infty}(0,T;\R^{m})$, and the system has a absolutely continuous solution, denoted by $Y^{u,y_{0}}_{\bullet}:[0,T]\to\R^{n}$. We are interested in solving the optimal control problem 
\begin{equation}
\min\{\frac{1}{2}\norm{Y^{u,y_{0}}_{\bullet}-\bar{y}(\bullet)}_{L^{2}(0,T;\R^{n})}^{2}+\alpha_{1}\norm{u}^{2}_{L^{2}(0,T;R^{m})}+\alpha_{2}\norm{u}_{L^{1}(0,T;\R^{m})}\}
\end{equation}
where the minimum is taken over the set of admissible controls 
$$
\scrU:=\{u\in L^{\infty}(0,T;\R^{m})\vert u(\bullet)\text{ is measurable and }\norm{u(t)}_{\infty}\leq 1~~ \text{a.e. }t\in[0,T]\}
$$
Let $P:[0,T]\to\R^{n\times n}$ denote the resolvent of the matrix equation $\dot{X}=AX$, $X(0)=\Id_{\R^{n}}$ satisfying $P(t)=\exp(tA)$. Then 
$$
Y^{y_{0},u}_{t}=P(t)y_{0}+P(t)\int_{0}^{t}P(s)^{-1}[B(s)u(s)+c(s)]\dif s 
$$ 
satisfies 
$$
\frac{\dif}{\dif t}Y^{y_{0},u}_{t}=AY^{y_{0},u}_{t},\quad Y^{y_{0},u}_{0}=y_{0}.
$$
This in turn is equivalent to 
$$
S(u,y)+z_{0}=0,
$$ 
where 
\begin{align*}
&S(u,y)(t)\eqdef -y(t)+P(t)\int_{0} ^{t}P(s)^{-1}B(s)u(s)\dif s ,\text{ and }\\
&z_{0}(t)\eqdef P(t)y_{0}+P(t)\int_{0}^{t}P(s)^{-1}c(s)\dif s 
\end{align*}
Set $\scrH\eqdef L^{2}(0,T;\R^{m})\times L^{2}(0,T;\R^{n})$. $S$ is a bounded linear operator from $\scrH$ to $L^{2}(0,T;\R^{n})$, and consequently the function $\Psi_{1}:\scrH\to\R$ defined by 
$$
\Psi_{1}(u,y)\eqdef \frac{1}{2}\norm{S(u,y)+z_{0}}^{2}_{L^{2}(0,T;\R^{n})} 
$$
is convex and continuously differentiable. Next, define $\Psi_{2}(u,y)=\delta_{\scrU}(u)$ to obtain a convex, proper and lower semi-continuous function. Moreover, $(u,y)\in\scrH$ solves the control system if and only if $(u,y)\in\argmin(\Psi_{1}+\Psi_{2})$. With this notation, 
\begin{align*}
J_{1}(u,y)&\eqdef \frac{1}{2}\norm{y-\bar{y}}^{2}_{L^{2}(0,T;\R^{n})}+\alpha_{2}\norm{u}^{2}_{L^{2}(0,T;\R^{m})},\\
J_{2}(u,y)&\eqdef \alpha_{1}\norm{u}_{L^{1}(0,T;\R^{m})} 
\end{align*}
so that the optimal control problem becomes 
$$
\min\{J_{1}(u,y)+J_{2}(u,y):(u,y)\in\argmin(\Psi_{1}+\Psi_{2})\} 
$$
With $\opD\eqdef \nabla J_{1}$ and $\opA\eqdef \partial J_{2}$, so that $\scrC=\argmin(\Psi_{1}+\Psi_{2})$, we arrive at a constrained variational inequality problem of the form \eqref{eq:MI}. 

\subsection{Monotone inclusions involving compositions with linear continuous operators}
We next show how our method can be applied to solve monotone inclusion problems involving compositions of operators, as proposed by \cite{combettes2012primal,Bot:2014ab}
Let $\scrH$ and $\scrG$ be real Hilbert spaces. We introduce operators $\opA_{1}:\scrH\to 2^{\scrH}$ and $\opA_{2}:\scrG\to 2^{\scrG}$ which we assume to be maximally monotone. Additionally, we let $L:\scrH\to\scrG$ represent a linear continuous operator. Lastly, we consider $\opD:\scrH\to\scrH$ monotone and  and $\frac{1}{\eta}$-Lipschitz continuous operator with $\eta>0$, and a monotone operator $\opB:\scrH\to\scrH$ a monotone and $\frac{1}{\mu}$-Lipschitz continuous operator with $\mu>0$ satisfying $\scrC=\zer(\opB)\neq\emptyset$. The monotone inclusion problem to solve is
\begin{equation}\label{eq:Variational}
0\in \opA_{1}(x)+L^{\ast}\circ \opA_{2}\circ Lx+\opD(x)+\NC_{\scrC}(x).
\end{equation}
This splitting gains relevance in the generic convex optimization model 
$$
\min_{x\in\scrC}\{f(x)+h(x)+g(Lx)\}
$$
where $f\in\Gamma_{0}(\scrH),g\in\Gamma_{0}(\scrG)$, $K:\scrH\to\scrG$ is a bounded linear operator and $h\in\bC^{1,1}_{L_{h}}(\scrH;\R)$. Via the classical Fenchel-Rockafellar duality, we can transform this problem into the constrained saddle point problem 
$$
\min_{x\in\scrC}\max_{y}\{f(x)+h(x)+\inner{L x,y}-g^{\ast}(y)\}
$$
which amounts to solving the monotone inclusion problem consisting in finding a pair $(x^{\ast},y^{\ast})$ such that 
\begin{align*}
&0\in \partial f(x^{\ast})+\nabla h(x^{\ast})+L^{\ast}y+\NC_{\scrC}(x^{\ast})\\
&0\in Lx^{\ast}-\partial g^{\ast}(y^{\ast})
\end{align*}
 Since $\partial g^{\ast}=(\partial g)^{-1}$, we can combine these two inclusions to a single one reading as 
 $$
 0\in\partial f(x^{\ast})+\nabla h(x^{\ast})+L^{\ast}\partial g (L x^{\ast})+\NC_{\scrC}(x^{\ast}).
 $$
 We thus arrive at an instantiation of problem \eqref{eq:Variational}, by identifying $\opA_{1}=\partial f,\opA_{2}=\partial g,\opD=\nabla h$.\\

We use the product space approach in order to show that the general problem \eqref{eq:Variational} can be formulated as the monotone inclusion problem \eqref{eq:MI}. To this end, we consider the product space $\scrH\times\scrG$ endowed with the inner product 
 $$
 \inner{(x,y),(x',y')}_{\scrH\times\scrG}=\inner{x,x'}_{\scrH}+\inner{y,y'}_{\scrG} 
 $$
 and corresponding norm. We define the operators 
 $$
 \tilde{\opA}(x,y)\eqdef \opA_{1}(x)\times \opA_{2}^{-1}(y),\; \tilde{\opD}(x,y)\eqdef \begin{pmatrix} \opD(x)+L^{\ast}y\\ -L x\end{pmatrix},\;\tilde{\opB}(x,y)\eqdef\begin{pmatrix} \opB(x)\\0\end{pmatrix},
 $$
 and for $\tilde{\scrC}\eqdef \scrC\times\scrG=\zer(\tilde{\opB})$, 
 $$
 \NC_{\tilde{\scrC}}(x,v)= \NC_{\scrC}(x)\times\{0\}.
 $$
 One can easily show that if $(x,v)\in\zer(\tilde{\opA}+\tilde{\opD}+\NC_{\tilde{\scrC}})$, then $x\in\zer(\opA_{1}+L^{\ast}\opA_{2}L+\opD+\NC_{\scrC})$.  Conversely, when $x\in\zer(\opA_{1}+L^{\ast}\opA_{2}L+\opD+\NC_{\scrC})$, then there exists $v\in \opA_{2}(Lx)$ such that $(x,v)\in\zer(\tilde{\opA}+\tilde{\opD}+\NC_{\tilde{\scrC}})$. Thus, determining the zeros of operator $\tilde{\opA}+\tilde{\opD}+\NC_{\tilde{\scrC}}$ will provide a solution for the monotone inclusion problem \eqref{eq:Variational}. 
 
 $\tilde{\opA}$ is maximally monotone \cite[Proposition 20.23]{BauCom16}, $\tilde{\opD}$ is monotone and $\tilde{\eta}$-Lipschitz continuous, where $\tilde{\eta}=\sqrt{2(1/\eta^{2}+\norm{K}^{2})}$, and $\tilde{B}$ is monotone and $(1/\mu)$-Lipschitz continuous. We can thus directly use our dynamical system to determine zeros of $\tilde{A}+\tilde{D}+\NC_{\tilde{\scrC}}$. We write the trajectory in terms of pairs $t\mapsto (p(t),q(t))$ and $t\mapsto (x(t),y(t))$ given by 
 \begin{align*}
& p(t)=\res_{\lambda(t)\opA_{1}}(x(t)-\lambda(t)(\opD(x(t))+\eps(t)x(t)+L^{\ast}y(t)+\beta(t)\opB(x(t)))\\ 
& q(t)=\res_{\lambda(t)\opA_{2}^{-1}}(y(t)+\lambda(t)Lx(t)-\lambda(t)\eps(t)y(t))\\
& \dot{x}(t)=(1-\lambda(t)\eps(t))(p(t)-x(t))\\
&\quad +\lambda(t)[\opD(x(t))-\opD(p(t))+\beta(t)(\opB(x(t))-\opB(p(t)))+L^{\ast}(y(t)-q(t))]\\
& \dot{y}(t)=(1-\lambda(t)\eps(t))(q(t)-y(t))+\lambda(t)L(p(t)-x(t)).
\end{align*} 

\begin{remark}
Let us underline the fact that, even in the situation when $B$ is cocoercive and, hence, $\tilde{B}$ is cocoercive, the forward-backward penalty scheme studied in \cite{Bot:2016aa} cannot be applied in this context, because the operator $\tilde{D}$ is definitely not cocoercive. This is due to the presence of the skew operator $(x,y)\mapsto (K^{\ast}y,Kx)$ in its definition. This fact provides a good motivation for formulating, along the forward-backward penalty scheme, a forward-backward-forward penalty scheme for the monotone inclusion problem investigated in this paper.
\end{remark}

\subsubsection{Application to linear inverse problems}
Building on the primal-dual splitting approach of \cite{Bot:2014ab,Bot:2016ab}, we consider a linear inverse problem with forward operator $K:\Rn\to\R^{m}$ which is the problem of finding $\theta\in\R^{n}$ that solves the linear system 
\[
K\theta=b
\]
Typically, this linear system is ill-posed, and therefore a regularization framework is adopted. A popular formulation is to consider flattened gradient via an isotropic total variation regularization, which reads as the simple bilevel optimization problem 
\begin{equation}\label{eq:TV}
\min_{\theta\in[0,1]^{n}}\textsf{TV}(\theta) \quad\text{s.t.: }\theta\in S:=\argmin_{\theta'\in\Rn}\{\frac{1}{2}\norm{K\theta'-b}^{2}\}
\end{equation}
where the mapping $\textsf{TV}:\Rn\to\R$ is defined as 
\begin{align*}
\textsf{TV}(\theta)=&\sum_{i=1}^{M-1}\sum_{j=1}^{N-1}\sqrt{(\theta_{i+1,j}-\theta_{i,j})^{2}+(\theta_{i,j+1}-\theta_{i,j})^{2}}+\sum_{i=1}^{M-1}\abs{\theta_{i+1,N}-\theta_{i,N}}\\
&+\sum_{j=1}^{N-1}\abs{\theta_{M,j+1}-\theta_{M,j}}
\end{align*}
and $\theta_{i,j}$ denotes the normalized value of the pixel located in the $i$-th row and the $j$-th column, for $i\in\{1,\ldots,M\}$ and $j\in\{1,\ldots,N\}$. Let $\scrH=\Rn$ and $\scrY=\R^{n}\times\R^{m}$. Define the linear operator $L:\Rn\to\scrY$ by $\theta\mapsto (L_{1}\theta,L_{2}\theta)\in\scrY$ defined coordinate-wise by
\begin{align*}
L_{1}\theta_{i,j}=\left\{\begin{array}{ll} 
\theta_{i+1,j}-\theta_{i,j} & \text{ if }i<M,\\
0 & \text{else.}
\end{array}\right.,\quad 
L_{2}\theta_{i,j}=\left\{\begin{array}{ll} 
\theta_{i,j+1}-\theta_{i,j} & \text{ if }i<N,\\
0 & \text{else.}
\end{array}\right.
\end{align*}
 $L$ represents a discretization of the gradient using Neumann boundary conditions. We note that $\norm{L}^{2}\leq 8$. 
 
 For $(y,z),(u,v)\in\scrY$, we introduce the inner product 
 $$
 \inner{(y,z),(u,v)}:=\sum_{i=1}^{M}\sum_{j=1}^{N}(y_{i,j}u_{i,j}+z_{i,j}v_{i,j}),
 $$
 with the corresponding norm $\norm{(y,z)}_{\scrY}=\sqrt{\inner{(y,z),(y,z)}}$. It then follows $\mathsf{TV}(\theta)=\norm{L\theta}_{\scrY}$. The dual norm to $\norm{\cdot}_{\scrY}$ is defined as 
 $$
 \norm{(u,v)}_{\scrY,\ast}:=\sup_{\norm{(u,v)}_{\scrY}\leq 1}\inner{(y,z),(u,v)}.
 $$
Accordingly, we define the dual space $\scrY^{\ast}$, as the Euclidean space $\scrY$ endowed with the norm $\norm{\cdot}_{\scrY}$. 

Define the function $f(\theta)=\delta_{[0,1]^{n}}(\theta)$ and $g(u,v)=\norm{(u,v)}_{\scrY}$. It follows that \eqref{eq:TV} is representable as 
 $$
 \min_{\theta\in\scrH}\{f(\theta)+g(L\theta)\} \quad\text{s.t.: }\theta\in S:=\argmin_{\theta'\in\scrH}\{\frac{1}{2}\norm{K \theta'-b}^{2}\}.
 $$
 The Fenchel-Rockafellar dual approach gives us the saddle point bilevel problem
\begin{equation}\label{eq:saddle}
 \min_{\theta\in\scrH}\max_{(u,v)\in\scrY^{\ast}}\{f(\theta)+\inner{L\theta,(u,v)}-g^{\ast}(u,v)\} \quad\text{s.t.: }\theta\in S:=\argmin_{\theta'\in\scrH}\{\frac{1}{2}\norm{K \theta'-b}^{2}\}.
\end{equation}
where 
$$
g^{\ast}(u,v) =\delta_{M}(u,v), \;M=\{(u,v)\in\scrY\vert\; \norm{(u,v)}_{\scrY,\ast}\leq 1\}.
$$
This yields the optimality conditions 
\begin{align*}
&0\in\partial f(\bar{\theta})+L^{\ast}(\bar{u},\bar{v}) +\NC_{S}(\bar{\theta})\\ 
&0\in \partial g^{\ast}(\bar{u},\bar{v})-L\bar{\theta} 
\end{align*}
Define $\scrC=S\times\scrY$, so that $\NC_{\scrC}(\theta,u,v)=\NC_{S}(\theta)\times \{\0_{\scrY}\}$, to obtain the monotone inclusion 
$$
0\in \opA(\bar{\theta},\bar{u},\bar{v})+\opD(\bar{\theta},\bar{u},\bar{v})+\NC_{\scrC}(\theta,u,v).
$$
where $\opD$ is the skew symmetric linear operator $\opD(\theta,u,v)=[L^{\ast}(u,v),-L\theta]$. To solve this problem with our penalty regularized dynamical system, we relax the variational problem to arrive the the unconstrained min-max optimization formulation 
\begin{equation}
\min_{\theta\in\scrH}\max_{y\in\scrY}\{f(
\theta)+\inner{L\theta,(u,v)}-g^{\ast}(u,v)+\frac{\eps}{2}\norm{\theta}^{2}-\frac{\eps}{2}\norm{(u,v)}_{\scrY}^{2}+\beta\Psi(\theta,u,v)\}, 
\end{equation}
where $\Psi(\theta,u,v)\eqdef \frac{1}{2}\norm{K\theta-b}^{2}$. Define the monotone and co-coercive operator 
$$
\opB(\theta,u,v)\eqdef \nabla\Psi(\theta,u,v) =[K^{\ast}(K\theta-b);\0_{\scrY}]\in\scrH\times\scrY,
$$
so that $\scrC=\zer(B)$. We thus can approach the solution of our linear inverse problem with the outer penalization scheme using the monotone operator 
$$
\Phi_{\eps,\beta}(\theta,u,v)=\opA(\theta,u,v)+\opD(\theta,u,v)+\eps[\theta,u,v]+\beta\opB(\theta,u,v). 
$$
For the implementation of the algorithm, we use the formulas 
$$
\res_{\lambda\partial f}=\Pi_{[0,1]^{n}},\;\res_{\lambda\partial g^{\ast}}=\Pi_{M},
$$
where $\Pi_{S}:\scrY\to M$ is defined componentwise as \cite{Bot:2014ab}
$$
(u_{i,j},v_{i,j})\mapsto\frac{(p_{i,j},q_{i,j})}{\max\{1,\sqrt{p^{2}_{i,j}+q^{2}_{i,j}}\}}\quad\forall 1\leq i\leq M,1\leq j\leq N.
$$
Writing out the iterations of the FBF penalty system, we construct two absolutely continuous functions $p(t)=[\tilde{\theta}(t),\tilde{u}(t),\tilde{v}(t)]$ and $x(t)=[\theta(t),u(t),v(t)]$ solving the following system of ODEs
\begin{align*}
&\tilde{\theta}(t)=\Pi_{[0,1]^{n}}[\theta(t)-\lambda(t)L^{\ast}(u(t),v(t))-\lambda(t)\eps(t)\theta(t)-\lambda(t)\beta(t)K^{\ast}(K\theta(t)-b)],\\
&\begin{pmatrix}
\tilde{u}(t)\\ \tilde{v}(t)\end{pmatrix}=\Pi_{M}\left[\begin{pmatrix} u(t) \\ v(t)\end{pmatrix}+\lambda(t)\begin{pmatrix} L_{1}\theta(t)\\ L_{2}\theta(t)\end{pmatrix} -\lambda(t)\eps(t)\begin{pmatrix} u(t)\\ v(t)\end{pmatrix}\right],\\
&\dot{\theta}(t)+\theta(t)=\tilde{\theta}(t)+\lambda(t)\biggl[L^{*}(u(t)-\tilde{u}(t),v(t)-\tilde{v}(t))\\
&\quad+\eps(t)(\theta(t)-\tilde{\theta}(t))+\beta(t)K^{\ast}K(\theta(t)-\tilde{\theta}(t))\biggr],\\
&\begin{pmatrix} \dot{u}(t) \\ \dot{v}(t)\end{pmatrix}+\begin{pmatrix} u(t) \\ v(t)\end{pmatrix}=\begin{pmatrix}\tilde{u}(t) \\ \tilde{v}(t)\end{pmatrix}-\lambda(t)\begin{pmatrix} L_{1}(\theta(t)-\tilde{\theta}(t)) \\ L_{2}(\theta(t)-\tilde{\theta}(t))\end{pmatrix}+\lambda(t)\eps(t)
\begin{pmatrix} u(t)-\tilde{u}(t)\\ v(t)-\tilde{v}(t)\end{pmatrix}.
\end{align*}
For the numerical experiments, we considered two different test images, discretised on a grid of size $256\times 256$, and constructed a blurred and noisy image by making first use of a Gaussian blur operator of size $9\times 9$ and standard deviation 4. Afterwards, we've added a zero mean white Gaussian noise with standard deviation $10^{-3}$. The obtained numerical results are illustrated in Figure \ref{fig:Clock} and  \ref{fig:Cameraman}.

\begin{figure}[H]
\centering
    \includegraphics[width=0.7\textwidth]{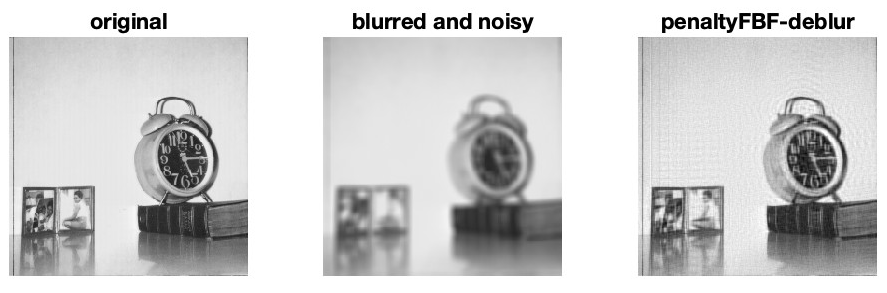}\\
     \includegraphics[width=0.4\textwidth]{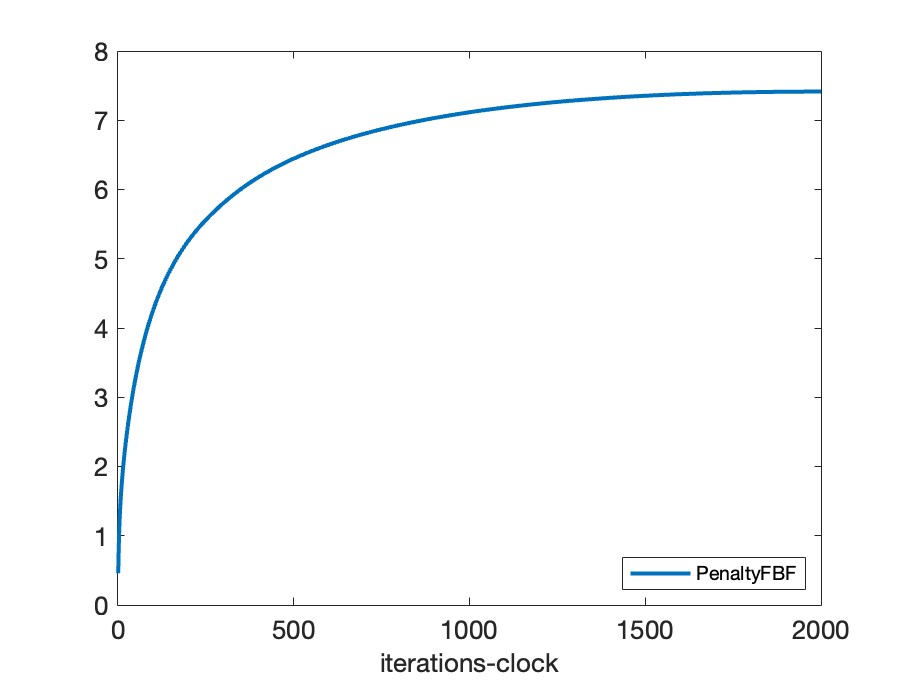}
   \caption{The figure shows that original image, the blurred image, and the reconstructed image, as well as the evolution of the ISNR for the penalty-regulated FBF dynamical system \eqref{eq:FBF}.}
   \label{fig:Clock}
\end{figure}
\begin{figure}[H]
\centering
    \includegraphics[width=0.7\textwidth]{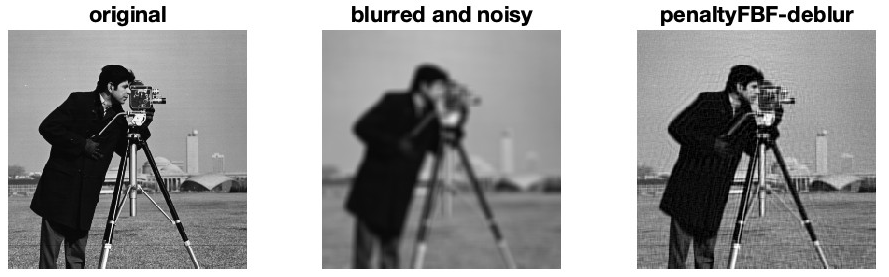}\\
    \includegraphics[width=0.4\textwidth]{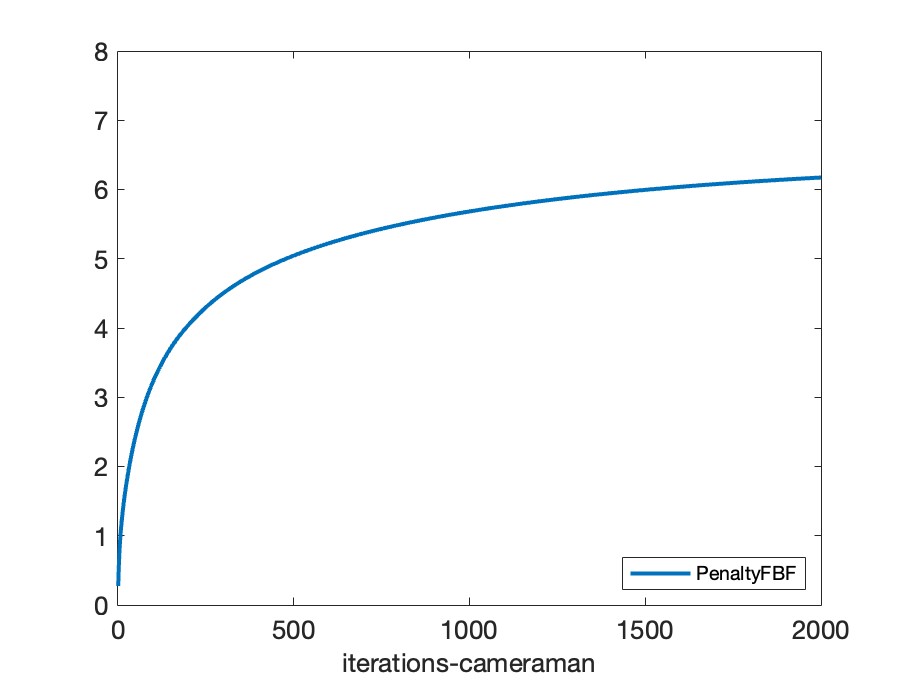}
    \caption{The figure shows that original image, the blurred image, and the reconstructed image, as well as the evolution of the ISNR for the penalty-regulated FBF dynamical system \eqref{eq:FBF}.}
    \label{fig:Cameraman}
\end{figure}
\section{Conclusion}
\label{sec:conclusion}
The asymptotic analysis of dynamical systems derived from operator splitting problems has been a very productive line of research over the past 20 years. In this paper we develop a family of dynamical systems featuring Tikhonov regularization and penalty effects. Tikhonov regularization induces strong convergence towards the minimum norm solution, whereas the penalty term steers the system to satisfy constraints subjected to the variational problem we aim to solve. We prove the asymptotic convergence in three paradigmatic settings: (i) Operator splitting with coocercive data, (ii) Operator splitting with monotone and Lipschitz data, and (iii) Operator splitting with multiple penalty terms. Future directions of research include the extension to stochastic operator equations, such as \cite{rodrigo2024stochastic}. Other potentially interesting directions would be the inclusion of inertia or momentum effects into the dynamical system in order investigate the potential for acceleration. We leave these interesting directions for future research.

\backmatter

\section*{Funding} This work was supported by a CSC scholarship and the FMJH Program Gaspard Monge for optimization and operations research and their interactions with data science. MST acknowledges financial support from the Deutsche Forschungsgemeinschaft (DFG) - Projektnummer 556222748 ("Non-stationary hierarchical optimization"). 

\section*{Declarations}


\begin{itemize}
\item The authors have no relevant financial or non-financial interests to disclose.
\item The authors contributed equally to this work. The first draft of this manuscript was written by Siqi Qu, and the co-authors Mathias Staudigl and Juan Peypouquet finalized the draft. 
\end{itemize}

\begin{appendices}

\section{Proof of Theorem \ref{th:MS1}}
\label{sec:Appendix_Multi}

Pick $(u,w)\in\gr(\opA+\opD+\NC_{\scrC})$ so that $w=a+\opD(u)+\xi$, for $a\in\opA(u)$ and $\xi\in\NC_{\scrC}(u)$.  From the definition of the forward-backward dynamical system, we have 
$$
-\frac{1}{\lambda(t)}\dot{x}(t)-V_{t}(x(t))-\beta(t)b_{2}(t)\in\opA(x(t)+\dot{x}(t)) 
$$
for some $b_{2}(t)\in\opB_{2}(x(t)+\dot{x}(t))=\partial\Psi_{2}(x(t)+\dot{x}(t))$. Combined with $a\in\opA(u)$, we obtain
$$
\inner{a+\frac{1}{\lambda(t)}\dot{x}(t)+V_{t}(x(t))+\beta(t)b_{2}(t),u-x(t)-\dot{x}(t)}\geq 0
$$
Rearranging this, we arrive at 
\begin{align*}
\inner{\dot{x}(t),x(t)+\dot{x}(t)-u}&\leq\lambda(t)\inner{a+\opD(x(t))+\eps(t)x(t)+\beta(t)\opB_{1}(x(t)),u-x(t)-\dot{x}(t)}\\
&+\lambda(t)\beta(t)\inner{b_{2}(t),u-\dot{x}(t)-x(t)}
\end{align*}
 
The subgradient inequality yields 
\begin{align*}
0 & =\Psi_{2}(u)\geq\Psi_{2}(x(t)+\dot{x}(t))+\inner{b_{2}(t),u-x(t)-\dot{x}(t)}\\
\iff & -\Psi_{2}(x(t)+\dot{x}(t))\geq\inner{b_{2}(t),u-x(t)-\dot{x}(t)}
\end{align*}
Hence, we can continue the previous display as 
\begin{align*}
\inner{\dot{x}(t),x(t)+\dot{x}(t)-u}&\leq \lambda(t)\inner{a+\opD(x(t))+\beta(t)\opB_{1}(x(t))+\eps(t)x(t),u-\dot{x}(t)-x(t)}\\
&-\lambda(t)\beta(t)\Psi_{2}(x(t)+\dot{x}(t)). 
\end{align*}
Therefore, 
\begin{equation}\label{eq:Lyap1}
\begin{split}
\frac{\dif}{\dif t}\norm{x(t)-u}^{2}&=2\inner{\dot{x}(t),x(t)-u}\\
&=2\inner{\dot{x}(t),x(t)+\dot{x}(t)-u}-2\norm{\dot{x}(t)}^{2}\\
&\leq 2\lambda(t)\inner{a+\opD(x(t))+\eps(t)x(t),u-\dot{x}(t)-x(t)}\\
&-2\lambda(t)\beta(t)\Psi_{2}(x(t)+\dot{x}(t))-2\norm{\dot{x}(t)}^{2}\\
&+2\lambda(t)\beta(t)\inner{\opB_{1}(x(t)),u-\dot{x}(t)-x(t)}
\end{split}
\end{equation}
Since $\opB_{1}$ is $\frac{1}{L_{\Psi_{1}}}$-cocoercive, we have 
\begin{equation}
\inner{\opB_{1}(x(t)),x(t)-u}\geq\frac{1}{L_{\Psi_{1}}}\norm{\opB_{1}(x(t))}^{2}\qquad\forall u\in\scrC.
\end{equation}
Additionally, the convex gradient inequality yields 
\begin{equation}
0=\Psi_{1}(u)\geq\Psi_{1}(x(t))+\inner{\opB_{1}(x(t)),u-x(t)}
\end{equation}
Performing a convex combination of these two inequalities, we obtain for all $c_{1}>0$,
\begin{equation}\label{eq:B11-main}
\inner{\opB_{1}(x(t)),x(t)-u}\geq \frac{1}{(1+c_{1})L_{\Psi_{1}}}\norm{\opB_{1}(x(t))}^{2}+\frac{c_{1}}{1+c_{1}}\Psi_{1}(x(t)).
\end{equation}
Next, take $c_{0}>0$, and observe 
\begin{align*}
0&\leq \frac{1}{1+c_{0}}\norm{\dot{x}(t)+(1+c_{0})\lambda(t)\beta(t)\opB_{1}(x(t))}^{2}\\
&=\frac{1}{1+c_{0}}\norm{\dot{x}(t)}^{2}+(1+c_{0})\lambda(t)^{2}\beta(t)^{2}\norm{\opB_{1}(x(t))}^{2}+2\lambda(t)\beta(t)\inner{\dot{x}(t),\opB_{1}(x(t))}.
\end{align*}
Hence, 
\begin{equation}\label{eq:B12}
2\lambda(t)\beta(t)\inner{\dot{x}(t),\opB_{1}(x(t))}\geq -\frac{1}{1+c_{0}}\norm{\dot{x}(t)}^{2}-(1+c_{0})\lambda(t)^{2}\beta(t)^{2}\norm{\opB_{1}(x(t))}^{2}.
\end{equation}
On the other hand, the descent lemma for functions with a Lipschitz continuous gradient yields
$$
\Psi_{1}(x(t)+\dot{x}(t))\leq\Psi_{1}(x(t))+\inner{\opB_{1}(x(t)),\dot{x}(t)}+\frac{L_{\Psi_{1}}}{2}\norm{\dot{x}(t)}^{2},
$$
so that 
\begin{align}\label{eq:B13}
2\lambda(t)\beta(t)\inner{\opB_{1}(x(t)),\dot{x}(t)}&\geq 2\lambda(t)\beta(t)[\Psi_{1}(x(t)+\dot{x}(t))-\Psi_{1}(x(t))]\\
&-L_{\Psi_{1}}\lambda(t)\beta(t)\norm{\dot{x}(t)}^{2}\nonumber
\end{align}
A convex combination of \eqref{eq:B12} with \eqref{eq:B13} shows that 
\begin{equation}\label{eq:B12-main}
\begin{split}
2\lambda(t)\beta(t)\inner{\opB_{1}(x(t)),\dot{x}(t)}&\geq-\frac{1+c_{0}}{1+c_1}\lambda(t)^{2}\beta(t)^{2}\norm{\opB_{1}(x(t))}^{2}\\
&-\left(\frac{1}{(1+c_{1})(1+c_{0})}+\frac{c_1 L_{\Psi_{1}}\lambda(t)\beta(t)}{1+c_{1}}\right)\norm{\dot{x}(t)}^{2}\\
&+ \frac{2c_{1}\lambda(t)\beta(t)}{1+c_1}\left(\Psi_{1}(x(t)+\dot{x}(t))-\Psi_{1}(x(t))\right).
\end{split}\end{equation}
Therefore, adding \eqref{eq:B11-main} to \eqref{eq:B12-main}, we arrive at 
\begin{equation}
\begin{split}
&2\lambda(t)\beta(t)\inner{\opB_{1}(x(t)),x(t)+\dot{x}(t)-u}\\
&\geq \lambda(t)\beta(t)\left(\frac{2}{(1+c_{1})L_{\Psi_{1}}}-\frac{1+c_{0}}{1+c_{1}}\lambda(t)\beta(t)\right)\norm{\opB_{1}(x(t))}^{2}\\
&+\frac{2c_{1}\lambda(t)\beta(t)}{1+c_{1}}\Psi_{1}(x(t)+\dot{x}(t))-\left(\frac{1}{(1+c_{1})(1+c_{0})}+\frac{c_{1}L_{\Psi_{1}}\lambda(t)\beta(t)}{1+c_{1}}\right)\norm{\dot{x}(t)}^{2}
\end{split}
\end{equation}
Plugging this into \eqref{eq:Lyap1}, we can continue this thread as 
\begin{align*}
\frac{\dif}{\dif t}\norm{x(t)-u}^{2}&\leq 2\lambda(t)\inner{a+\opD(x(t))+\eps(t)x(t),u-\dot{x}(t)-x(t)}\\
&-2\lambda(t)\beta(t)\Psi_{2}(x(t)+\dot{x}(t))-\frac{2c_{1}\lambda(t)\beta(t)}{1+c_{1}}\Psi_{1}(x(t)+\dot{x}(t))\\
&-\lambda(t)\beta(t)\left(\frac{2}{(1+c_{1})L_{\Psi_{1}}}-\frac{1+c_{0}}{1+c_{1}}\lambda(t)\beta(t)\right)\norm{\opB_{1}(x(t))}^{2}\\
&+\left(\frac{1}{(1+c_{1})(1+c_{0})}+\frac{c_{1}L_{\Psi_{1}}\lambda(t)\beta(t)}{1+c_{1}}-2\right)\norm{\dot{x}(t)}^{2}
\end{align*}
We have 
$$
2\lambda(t)\beta(t)\Psi_{2}(x(t)+\dot{x}(t))\geq \frac{2c_{1}\lambda(t)\beta(t)}{1+c_{1}}\Psi_{2}(x(t)+\dot{x}(t)). 
$$
Collecting terms, we therefore arrive at the expression
\begin{align*}
&\frac{\dif}{\dif t}\norm{x(t)-u}^{2}+\lambda(t)\beta(t)\left(\frac{2}{(1+c_{1})L_{\Psi_{1}}}-\frac{1+c_{0}}{1+c_{1}}\lambda(t)\beta(t)\right)\norm{\opB_{1}(x(t))}^{2}\\
&+\left(2-\frac{1}{(1+c_{1})(1+c_{0})}-\frac{c_{1}L_{\Psi_{1}}\lambda(t)\beta(t)}{1+c_{1}}\right)\norm{\dot{x}(t)}^{2}\\
&\leq 2\lambda(t)\inner{a+\opD(x(t))+\eps(t)x(t),u-x(t)-\dot{x}(t)}\\
&-\frac{2c_{1}\lambda(t)\beta(t)}{1+c_{1}}(\Psi_{1}+\Psi_{2})(x(t)+\dot{x}(t)).
\end{align*}

For ease of notation, let us set $\opD_{\eps}\eqdef\opD+\eps\Id_{\scrH}$. We next observe that 
\begin{align*}
&2\lambda(t)\inner{a+\opD_{\eps}(x(t)),u-x(t)-\dot{x}(t)}\\
&=2\lambda(t)\inner{a+\opD_{\eps}(x(t)),u-x(t)}+2\lambda(t)\inner{a+\opD_{\eps}(x(t)),-\dot{x}(t)}\\
&= 2\lambda(t)\inner{a+\opD_{\eps}(u),u-x(t)}+2\lambda(t)\inner{\opD_{\eps}(x(t))-\opD_{\eps}(u),u-x(t)}\\
&\quad+2\lambda(t)\inner{a+\opD_{\eps}(x(t)),-\dot{x}(t)}\\
&\leq -2\lambda(t)\eps(t)\norm{(x(t)-u}^{2}+2\lambda(t)\inner{a+\opD_{\eps}(u),u-x(t)}\\
&\quad+2\lambda(t)\inner{a+\opD_{\eps}(x(t)),-\dot{x}(t)}\\
&\leq-2\lambda(t)\eps(t)\norm{x(t)-u}^{2}+2\lambda(t)\inner{a+\opD_{\eps}(u),u-x(t)}\\
&\quad +\frac{c_2}{2}\norm{\dot{x}}^2+\frac{2\lambda^2(t)}{c_2}\norm{a+\opD_{\eps}(x(t))}^2\\
&\leq-2\lambda(t)\eps(t)\norm{x(t)-u}^{2}+2\lambda(t)\inner{a+\opD_{\eps}(u),u-x(t)}\\
&\quad +\frac{c_2}{2}\norm{\dot{x}}^2+\frac{4\lambda^2(t)}{c_2}\norm{a+\opD_{\eps}(u)}^2++\frac{4\lambda^2(t)}{c_2}\norm{\opD_{\eps}(x(t))-\opD_{\eps}(u)}^2\\
&\leq-2\lambda(t)\eps(t)\norm{x(t)-u}^{2}+2\lambda(t)\inner{a+\opD_{\eps}(u),u-x(t)}\\
&\quad +\frac{c_2}{2}\norm{\dot{x}}^2+\frac{4\lambda^2(t)}{c_2}\norm{a+\opD_{\eps}(u)}^2+\frac{4\lambda^2(t)}{c_2}\bigg(\frac{2}{\eta^2}+2\eps(t)^2\bigg)\norm{x(t)-u}^2
\end{align*}
from the definition, we have $a+\opD(u)=w-\xi$, which means
\begin{align*}
&2\lambda(t)\inner{a+\opD_{\eps}(x(t)),u-x(t)-\dot{x}(t)}\\
&\leq\bigg(\frac{8\lambda^2(t)}{c_2\eta^2}+\frac{8\lambda^2(t)\eps^2(t)}{c_2}-2\lambda(t)\eps(t)\bigg)\norm{x(t)-u}^{2}+2\lambda(t)\inner{a+\opD_{\eps}(u),u-x(t)}\\
&\quad +\frac{c_2}{2}\norm{\dot{x}}^2+\frac{4\lambda^2(t)}{c_2}\norm{a+\opD_{\eps}(u)}^2\\
&\leq\bigg(\frac{8\lambda^2(t)}{c_2\eta^2}+\frac{8\lambda^2(t)\eps^2(t)}{c_2}-2\lambda(t)\eps(t)\bigg)\norm{x(t)-u}^{2}+2\lambda(t)\inner{w-\xi,u-x(t)}\\
&\quad +2\lambda(t)\eps(t)\inner{u,u-x(t)}+\frac{c_2}{2}\norm{\dot{x}(t)}^2+\frac{4\lambda^2(t)}{c_2}\norm{a+\opD_{\eps}(u)}^2
\end{align*}
By Young's inequality,
\begin{align*}
2\lambda(t)\eps(t)\inner{u,u-x(t)}&=2\inner{\eps(t)u,\lambda(t)(u-x(t))}\leq c_3\eps^2(t)\norm{u}^2+\frac{1}{c_3}\lambda^2(t)\norm{u-x(t)}^2
\end{align*}
hence,
\begin{align*}
&2\lambda(t)\inner{a+\opD_{\eps}(x(t)),u-x(t)-\dot{x}(t)}\\
&\leq\bigg(\frac{8\lambda^2(t)}{c_2\eta^2}+\frac{8\lambda^2(t)\eps^2(t)}{c_2}+\frac{1}{c_3}\lambda^2(t)-2\lambda(t)\eps(t)\bigg)\norm{x(t)-u}^{2}\\
&\quad+2\lambda(t)\inner{w-\xi,u-x(t)}+c_3\eps^2(t)\norm{u}^2+\frac{c_2}{2}\norm{\dot{x}(t)}^2 +\frac{4\lambda^2(t)}{c_2}\norm{a+\opD_{\eps}(u)}^2.
\end{align*}
Plugging this bound into the above display, we can continue with
\begin{align*}
&\frac{\dif}{\dif t}\norm{x(t)-u}^{2}+\lambda(t)\beta(t)\left(\frac{2}{(1+c_{1})L_{\Psi_{1}}}-\frac{1+c_{0}}{1+c_{1}}\lambda(t)\beta(t)\right)\norm{\opB_{1}(x(t))}^{2}\\
&+\left(2-\frac{1}{(1+c)(1+c_{0})}-\frac{c_{1}L_{\Psi_{1}}\lambda(t)\beta(t)}{1+c_{1}}\right)\norm{\dot{x}(t)}^{2}\\
&\leq 2\lambda(t)\inner{w,u-x(t)}-\frac{2c_{1}\lambda(t)\beta(t)}{1+c_{1}}(\Psi_{1}+\Psi_{2})(x(t)+\dot{x}(t))\\
&\quad+\bigg(\frac{8\lambda^2(t)}{c_2\eta^2}+\frac{8\lambda^2(t)\eps^2(t)}{c_2}+\frac{1}{c_3}\lambda^2(t)-2\lambda(t)\eps(t)\bigg)\norm{x(t)-u}^{2}\\
&\quad+c_3\eps^2(t)\norm{u}^2+\frac{c_2}{2}\norm{\dot{x}(t)}^2 +\frac{4\lambda^2(t)}{c_2}\norm{a+\opD_{\eps}(u)}^2\\
&\quad-2\lambda(t)\inner{\xi,u-x(t)-\dot{x}(t)}-2\lambda(t)\inner{\xi,\dot{x}(t)}.
\end{align*}
Applying Young's inequality again to the last term, we obtain
\[
2\lambda(t)\inner{\xi,-\dot{x}(t)}\leq \frac{c_{2}}{2}\norm{\dot{x}(t)}^{2}+\frac{2\lambda^{2}(t)}{c_{2}}\norm{\xi}^{2}.
\]
Consequently, 
\begin{align*}
    &\frac{\dif}{\dif t}\norm{x(t)-u}^{2}+\lambda(t)\beta(t)\norm{\opB_{1}(x(t))}^{2}\left(\frac{2}{(1+c_{1})L_{\Psi_{1}}}-\frac{1+c_{0}}{1+c_{1}}\lambda(t)\beta(t)\right)\\
&+\left(2-\frac{1}{(1+c_{1})(1+c_{0})}-\frac{c_{1}L_{\Psi_{1}}\lambda(t)\beta(t)}{1+c_{1}}-c_2\right)\norm{\dot{x}(t)}^{2}\\
&\leq 2\lambda(t)\inner{w,u-x(t)}-\frac{2c_{1}\lambda(t)\beta(t)}{1+c_{1}}(\Psi_{1}+\Psi_{2})(x(t)+\dot{x}(t))\\
&\quad+\bigg(\frac{8\lambda^2(t)}{c_2\eta^2}+\frac{8\lambda^2(t)\eps^2(t)}{c_2}+\frac{1}{c_3}\lambda^2(t)-2\lambda(t)\eps(t)\bigg)\norm{x(t)-u}^{2}\\
&\quad+c_3\eps^2(t)\norm{u}^2+\frac{4\lambda^2(t)}{c_2}\norm{a+\opD_{\eps}(u)}^2+\frac{2\lambda^{2}(t)}{c_{2}}\norm{\xi}^{2}-2\lambda(t)\inner{\xi,u-x(t)-\dot{x}(t)}
\end{align*}
Let us set $\delta:=\limsup_{t\to\infty}L_{\Psi_{1}}\lambda(t)\beta(t)<2$. Then, we obtain
\begin{align*}
&2-\frac{1}{(1+c_{1})(1+c_{0})}-\frac{c_{1}L_{\Psi_{1}}\lambda(t)\beta(t)}{1+c_{1}}-c_{2}\geq 2-\frac{1}{(1+c_{1})(1+c_{0})}-\frac{c_{1}\delta}{1+c_{1}}-c_{2}.
\end{align*}
Letting $c_{1}\to 0^{+}$, we obtain 
$$
2-\frac{1}{1+c_{0}}-c_{2}>0.
$$
For $(c_{0},c_{2})>0$ sufficiently small this can be achieved. Henceforth, by continuity, there exists a set of parameters $(c_{0},c_{1},c_{2})$ sufficiently small so that 
\[
b\eqdef 2-\frac{1}{(1+c_{1})(1+c_{0})}-\frac{c_{1}\delta}{1+c_{1}}-c_{2}>0,
\]
meanwhile
\[
e\eqdef \frac{2}{(1+c_{1})L_{\Psi_{1}}}-\frac{1+c_{0}}{(1+c_{1})L_{\Psi_{1}}}\delta=\frac{2-(1+c_{0})\delta}{(1+c_{1})L_{\Psi_{1}}}>0.
\]

Let us further call 
\[
    d\eqdef \frac{2c_{1}}{1+c_{1}},\quad \text{and} \quad\delta(t)\eqdef -\frac{8\lambda^2(t)}{c_2\eta^2}-\frac{8\lambda^2(t)\eps^2(t)}{c_2}-\frac{1}{c_3}\lambda^2(t)+2\lambda(t)\eps(t),
\]
and 
to arrive at the expression 
\begin{align*}
     &\frac{\dif}{\dif t}\norm{x(t)-u}^{2}+e\lambda(t)\beta(t)\norm{\opB_{1}(x(t))}^{2}+b\norm{\dot{x}(t)}^{2}+\frac{d}{2}\lambda(t)\beta(t)(\Psi_{1}+\Psi_{2})(x(t)+\dot{x}(t))\\
&\leq -\frac{d}{2}\lambda(t)\beta(t)(\Psi_{1}+\Psi_{2})(x(t)+\dot{x}(t))\\
&-\frac{d}{2}\lambda(t)\beta(t)\inner{\frac{4\xi}{d\beta(t)},u}+\frac{d}{2}\lambda(t)\beta(t)\inner{\frac{4\xi}{d\beta(t)},\dot{x}(t)+x(t)}+\frac{2\lambda^{2}(t)}{c_{2}}\norm{\xi}^{2}\\ 
&\quad+2\lambda(t)\inner{w,u-x(t)}-\delta(t)\norm{x(t)-u}^{2}+c_3\eps^2(t)\norm{u}^2+\frac{4\lambda^2(t)}{c_2}\norm{a+\opD_{\eps}(u)}^2
\end{align*}

Since $\sigma_{\scrC}(\frac{4\xi}{d\beta(t)})=\inner{\frac{4\xi}{d\beta(t)},u}$ and 
$$
\inner{\frac{4\xi}{d\beta(t)},\dot{x}+x(t)}-(\Psi_{1}+\Psi_{2})(x(t)+\dot{x}(t))\leq(\Psi_{1}+\Psi_{2})^{\ast}(\frac{4\xi}{d\beta(t)}),
$$
we can continue with 
\begin{align*}
     &\frac{\dif}{\dif t}\norm{x(t)-u}^{2}+e\lambda(t)\beta(t)\norm{\opB_{1}(x(t))}^{2}+b\norm{\dot{x}(t)}^{2}+\frac{d}{2}\lambda(t)\beta(t)(\Psi_{1}+\Psi_{2})(x(t)+\dot{x}(t))\\
&\leq\frac{d}{2}\lambda(t)\beta(t)\left[(\Psi_{1}+\Psi_{2})^{\ast}(\frac{4\xi}{d\beta(t)})-\sigma_{\scrC}(\frac{4\xi}{d\beta(t)})\right]+\frac{2\lambda^{2}(t)}{c_{2}}\norm{\xi}^{2}\\ 
&\quad+2\lambda(t)\inner{w,u-x(t)}-\delta(t)\norm{x(t)-u}^{2}+c_3\eps^2(t)\norm{u}^2+\frac{4\lambda^2(t)}{c_2}\norm{a+\opD_{\eps}(u)}^2
\end{align*}
Now, consider the case where $w=0$, i.e. where $u\in\zer(\opA+\opD+\NC_{\scrC})$. 
Our assumptions on $\eps$ and $\lambda$ imply that there exists $T_{0}>0$ sufficiently large so that $\delta(t)>0$ for all $t\geq T_{0}$. Fix such a $T_{0}$, so that for all $t\geq T_{0}$, we can further simplify the above expression to 
\begin{align}
     &\frac{\dif}{\dif t}\norm{x(t)-u}^{2}+e\lambda(t)\beta(t)\norm{\opB_{1}(x(t))}^{2}+b\norm{\dot{x}(t)}^{2}+\frac{d}{2}\lambda(t)\beta(t)(\Psi_{1}+\Psi_{2})(x(t)+\dot{x}(t)) \nonumber \\
&\leq\frac{d}{2}\lambda(t)\beta(t)\left[(\Psi_{1}+\Psi_{2})^{\ast}(\frac{4\xi}{d\beta(t)})-\sigma_{\scrC}(\frac{4\xi}{d\beta(t)})\right]+\frac{2\lambda^{2}(t)}{c_{2}}\norm{\xi}^{2} \nonumber \\ 
&\quad+c_3\eps^2(t)\norm{u}^2+\frac{4\lambda^2(t)}{c_2}\norm{a+\opD_{\eps}(u)}^2 \nonumber \\
&\leq \frac{d}{2}\lambda(t)\beta(t)\left[(\Psi_{1}+\Psi_{2})^{\ast}(\frac{4\xi}{d\beta(t)})-\sigma_{\scrC}(\frac{4\xi}{d\beta(t)})\right]+\frac{2\lambda^{2}(t)}{c_{2}}\norm{\xi}^{2} \nonumber \\ 
&\quad+(c_3+8\lambda^{2}(t)/c_{2})\eps^2(t)\norm{u}^2+\frac{8\lambda^2(t)}{c_2}\norm{a+\opD(u)}^2. \label{E:G}
\end{align}
Integrating the penultimate display from $T_{1}$ to $T_{0}$ with $T_{1}\geq T_{0}$, we obtain 
\begin{align*}
    &\norm{x(T_{1})-u}^{2}-\norm{x(T_{0})-u}^{2}+\int_{T_{0}}^{T_{1}}e\lambda(t)\beta(t)\norm{\opB_{1}(x(t))}^{2}\dif t\\
    &+\int_{T_{0}}^{T_{1}}b\norm{\dot{x}(t)}^{2}\dif t+\int_{T_{0}}^{T_{1}}\frac{d}{2}\lambda(t)\beta(t)(\Psi_{1}+\Psi_{2})(x(t)+\dot{x}(t))\dif t\leq G,
\end{align*}
where $G$ is the integral of the right-hand side of \eqref{E:G} on $(0,\infty)$, which is finite in view of our assumptions. Using Lemma \ref{lem:Abbas}, we see that $\lim_{t\to\infty}\norm{x(t)-u}$ exists, as well as 
\begin{align*}
&\int_{T}^{\infty} \lambda(t)\beta(t)\norm{\opB_{1}(x(t))}^{2}\dif t<\infty,\\
&\int_{T}^{\infty}\lambda(t)\beta(t)(\Psi_{1}+\Psi_{2})(x(t)+\dot{x}(t))\dif t<\infty\text{ and } \\
&\int_{T}^{\infty}\norm{\dot{x}(t)}^{2}\dif t<\infty. 
\end{align*}
Assuming that $\liminf_{t\to\infty}\lambda(t)\beta(t)>0$, we thus see that 
$$
\lim_{t\to\infty}\norm{\opB_{1}(x(t))}=0,\text{ and }\lim_{t\to\infty}(\Psi_{1}+\Psi_{2})(x(t)+\dot{x}(t))=0. 
$$

If follows that every weak limit point of $x(t)$ lies in $\scrC$. We claim that every weak limit point of a suitably defined ergodic trajectory must belong to $\zer(\opA+\opD+\NC_{\scrC})$. To derive this conclusion, we fix $T_{0}$, and define the ergodic trajectory
$$
\bar{x}(T)\eqdef \frac{\int_{T_{0}}^{T}\lambda(s)x(s)\dif s}{\int_{T_{0}}^{T}\lambda(s)\dif s} 
$$
as a function of the upper boundary of the integral. Let $T_{n}\uparrow\infty$, so that $\bar{x}(T_{n})\to \bar{x}_{\infty}$. We claim that $\bar{x}_{\infty}\in\zer(\opA+\opD+\NC_{\scrC})$. To show this, we integrate again for a given pair $(u,w)\in\gr(\opA+\opD+\NC_{\scrC})$, so that 
\begin{align*}
        \norm{x(T_{1})-u}^{2}-&\norm{x(T_{0})-u}^{2}+\int_{T_{0}}^{T_{1}}e\lambda(t)\beta(t)\norm{\opB_{1}(x(t))}^{2}\dif t\\
    &+\int_{T_{0}}^{T_{1}}b\norm{\dot{x}(t)}^{2}\dif t+\int_{T_{0}}^{T_{1}}\frac{d}{2}\lambda(t)\beta(t)(\Psi_{1}+\Psi_{2})(x(t)+\dot{x}(t))\dif t\\
    &\leq G+\int_{T_{0}}^{T_{1}}2\lambda(t)\inner{w,u-x(t)}\dif t.
\end{align*}
Dividing both sides by $2\Lambda(T_{0},T_{1}):=2\int_{T_{0}}^{T_{1}}\lambda(t)\dif t$, we arrive at 
\begin{align*}
    -\frac{\norm{x(T_{0})-u}^{2}}{2\Lambda(T_{0},T_{1})}\leq \frac{G}{\Lambda(T_{0},T_{1})}+\inner{w,u-\frac{\int_{T_{0}}^{T_{1}}\lambda(t)x(t)\dif t}{\Lambda(T_{0},T_{1})}}. 
\end{align*}
Letting $T_{1}\to\infty$, we obtain 
$$
0\leq \inner{u,w-\bar{x}_{\infty}}.
$$
Fact \ref{fact:angle} allows us to conclude  $\bar{x}_{\infty}\in\zer(\opA+\opD+\NC_{\scrC})$ and so every weak accumulation point of the trajectory $x(\cdot)$ must lie in this set. Since $\lim_{t\to\infty}\norm{x(t)-u}$ exists for every $u\in\zer(\opA+\opD+\NC_{\scrC})$, we conclude that $\bar x(t)$ converges weakly, as $t\to\infty$ to a zero of $\opA+\opD+\NC_{\scrC}$, as claimed.
\end{appendices}

\bibliography{PenaltyDynamics}

\end{document}